\newtheorem{theo}{Theorem}[section]
\newtheorem{lemm}[theo]{Lemma}
\numberwithin{equation}{section}
\theoremstyle{definition}
\newtheorem{rema}[theo]{Remark}
\newcommand{\BC}{\mathbb{C}}
\newcommand{\BN}{\mathbb{N}}
\newcommand{\BR}{\mathbb{R}}
\newcommand{\bb}{\boldsymbol{b}}
\newcommand{\bD}{\boldsymbol{D}}
\newcommand{\bF}{\boldsymbol{F}}
\newcommand{\bI}{\boldsymbol{I}}
\newcommand{\bK}{\boldsymbol{K}}
\newcommand{\bmm}{\boldsymbol{m}}
\newcommand{\bN}{\boldsymbol{N}}
\newcommand{\bS}{\boldsymbol{S}}
\newcommand{\bT}{\boldsymbol{T}}
\newcommand{\bu}{\boldsymbol{u}}
\newcommand{\bU}{\boldsymbol{U}}
\newcommand{\bv}{\boldsymbol{v}}
\renewcommand{\L}{\mathrm{L}}
\renewcommand{\H}{\mathrm{H}}
\newcommand{\B}{\mathrm{B}}
\newcommand{\W}{\mathrm{W}}
\newcommand{\C}{\mathrm{C}}
\newcommand{\CF}{\mathcal{F}}
\newcommand{\pd}{\partial}
\newcommand{\wt}{\widetilde}
\newcommand{\wh}{\widehat}
\newcommand{\dv}{\mathrm{div}\,}
\newcommand{\supp}{\mathrm{supp}\,}
\newcommand{\dx}{\,\mathrm{d}x}
\begin{document}
\title{\Large Global existence of the Navier-Stokes-Korteweg equations with a non-decreasing pressure in $\mathrm{L}^p$-framework \footnotemark[0]}
\author{Keiichi Watanabe$^1$}
\date{}
\maketitle

\footnotetext[0]{AMS subject classifications: 35Q30; 76N10}
\footnotetext[1]{keiichi-watanabe@akane.waseda.jp}

\vspace{-4mm}
\centerline{\textit{Department of Pure and Applied Mathematics, Waseda University}}

\abstract{\noindent We consider the isentropic Navier-Stokes-Korteweg equations with a non-decreasing pressure on the whole space $\BR^n$ $(n \ge 2)$, where the system describes the motion of compressible fluids such as liquid-vapor mixtures with phase transitions including a variable internal capillarity effect. We prove the existence of a unique global strong solution to the system in the $\mathrm{L}^p$-in-time and $\mathrm{L}^q$-in-space framework, especially in the maximal regularity class, by assuming $(p, q) \in (1, 2) \times (1, \infty)$ or $(p, q) \in \{2\} \times (1, 2]$. We show that the system is globally well-posed for small initial data belonging to $\mathrm{H}^{s + 1, q} (\BR^n) \times \mathrm{H}^{s, q} (\BR^n)^n$ provided $s \ge n/q$ if $q \le n$ and $s \ge 1$ if $q > n$. Our results allow the case when the derivative of the pressure is zero at a given constant state, that is, the critical states that the fluid changes a phase from vapor to liquid or from liquid to vapor. The arguments in this paper do not require any exact expression or a priori assumption on the pressure.}
	


\section{Introduction}
\noindent
This paper investigates the motion of isentropic compressible fluids with capillary effects on the whole space $\BR^n$ $(n \ge 2)$:
\begin{align}
\label{2.1.1}
\left\{\begin{aligned}
\pd_t \varrho + \dv \bmm = 0, \\
\pd_t \bmm + \dv \left(\frac{\bmm \otimes \bmm}{\varrho}\right) - \dv \bT (\varrho, \bmm) + \nabla P(\varrho) = 0, \\
(\varrho, \bmm) \rvert_{t = 0} = (\varrho_0 (x), \bmm_0 (x)),
\end{aligned}\right.
\end{align}
where $\varrho = \varrho (x, t)$ and $\bmm = \bmm (x, t) = {}^\top\!(m_1 (x, t), \dots, m_n (x, t))$ stand the unknown density and momentum, respectively, at time $t > 0$ and the spatial
coordinate $x \in \BR^n$ $(n \ge 2)$; $\varrho_0 = \varrho_0 (x)$ and $\bmm_0 = \bmm_0 (x) = {}^\top\!(m_{0, 1} (x), \dots, m_{0, n} (x) )$ stand given initial data; $\bT (\bmm/\varrho)$ stands the  stress tensor defined by
\begin{align*}
\bT (\varrho, \bmm) & = \bS \left(\frac{\bmm}{\varrho}\right) + \bK (\varrho), \\
\bS \left(\frac{\bmm}{\varrho}\right) & = 2 \mu \bD \left(\frac{\bmm}{\varrho}\right) + (\nu - \mu) \left(\dv \left(\frac{\bmm}{\varrho}\right)\right) \bI, \\
\bK (\varrho) & = \frac{\kappa}{2} (\Delta \varrho^2 - \lvert \nabla \varrho \rvert^2) \bI - \kappa \nabla \varrho \otimes \nabla \varrho,
\end{align*}
where $\bD (\bmm/\varrho) = 2^{-1} (\nabla (\bmm/\varrho) + {}^\top\!(\nabla (\bmm/\varrho)))$ is the deformation tensor; $P(\varrho)$ stands the pressure that is assumed to be a smooth function with respect to $\varrho$ satisfying $P'(\rho_*) \ge 0$, where $\rho_*$ is a given positive constant; $\mu$ and $\nu$ stand given constants denoting the viscosity coefficients and $\kappa$ stands given constant denoting the capillary coefficient, which satisfy the conditions:
\begin{align}
\label{cond-1}
\mu > 0, \quad \nu > \frac{n - 2}{n} \mu, \quad \text{and} \quad \kappa > 0.
\end{align}
Here, the first and second conditions of \eqref{cond-1} ensure strong ellipticity for the Lam\'{e} operator. We emphasize that any exact expression or a priori assumption of the pressure $P$ is not imposed in the present paper. \par
The system \eqref{2.1.1} is said to be the \textit{Navier-Stokes-Korteweg equations}, which were originally introduced by Korteweg~\cite{K}. Notice that, when $\kappa = 0$, the system \eqref{2.1.1} deduces to the usual compressible Navier-Stokes equations. If $\kappa$ depends on the density $\varrho$, it is known that there are applications to shallow water models~\cite{BDL03,H10,JLW14} or quantum fluids models~\cite{BGV19,H17}. A feature of the Korteweg-type model is that the density gradient $\nabla \varrho$ appears in the stress tensor, which is a consequence of the second gradient theory due to van der Walls~\cite{R79}. The tensor $\bK$ containing $\nabla \varrho$ is called the \textit{Koteweg tensor} formulated by Dunn and Serrin~\cite{DS85} in order to analyze the structure of liquid-vapor mixtures with phase transitions. In particular, they proposed a concept of the \textit{interstitial working}, which is a mechanical quantity, and determine the coefficients of the Korteweg tensor such that the Clausius-Duhem inequality holds. Roughly speaking, for the Korteweg-type fluids, the entropy flux does not obey to the classical Fourier law but the ``Korteweg'' law---the entropy flux consists of the heat transfer and the interstitial working. For further details on the interstitial working, the reader may consult the paper by Dunn~\cite{D86}, Dunn and Serrin~\cite{DS85}, and references therein. Although the formulation due to Dunn and Serrin~\cite{DS85} assumed that $\kappa$ is a non-negative constant, Heida and M\'{a}lek~\cite{HM10} showed that it is allowed to consider $\kappa$ as a non-negative function of $\varrho$ with a slight modifications on the Korteweg tensor, where the term $\frac{1}{2} \varrho \kappa'(\varrho) \lvert \nabla \varrho \rvert^2 \bI$ is added to $\bK$. In the present paper, however, we suppose that the contribution of $\varrho$ to $\kappa$ is so small that we can see that $\kappa$ is a constant, which is the similar assumption as for the viscosity coefficients. Since the Cauchy problem for~\eqref{2.1.1} in the case when $\kappa \equiv 0$ is well-studied~\cite{CD10,CMZ10,CZ19,D00,DM17,FZZ18,FNP01,L98,MN80,M03}, this paper focuses on the case when $\kappa$ is a positive constant. \par
There are two different type of models to interpret liquid-vapor phase interfaces: \textit{sharp interface models} and \textit{diffuse interface models}. We here mention that diffuse interface models are also called phase field models. Sharp interface models treat the interfaces as zero thickness, which were introduced to investigate capillary effects occurring at the interface, especially, to consider the boundary conditions such as the Young-Laplace law in the early 19th century (cf. Finn~\cite{F86}). When we adopt sharp interface models on the fluid, we recognize the interface separates the fluid into liquid or vapor without spinodal phases taken into account, so that physical quantities, \textit{e.g.,} the density and momentum, are \textit{discontinuous} across the interfaces. To the best of our knowledge, sharp interface models have been known as one of the well-adopted models for moving boundary problems of two-phase flows~\cite{ACFGM,IH11,PS16,W18}, but there is an example that this model collapses. Indeed, when we simulate the moving contact line along a rigid surface, the thickness of interfaces cannot be negligible due to some numerical reasons~\cite{AMW98,LLGH15}. Compared with sharp interface models, in diffuse interface models, the liquid-vapor phase interfaces are construed as \textit{interfacial layers}, which are also called transition zone, with extremely small but non-zero thickness. Diffuse-interface models provide smoothness of physical quantities across the interfaces. More precisely, a order parameter varies sharply but smoothly, which enables us to overcome some difficulties arisen in sharp interface models~\cite{AMW98,LLGH15}. It is known that the Navier-Stokes-Korteweg equations \eqref{2.1.1} can be understood as a diffuse interface model due to the capillary terms, see, \textit{e.g.,} \cite{AMW98,CDMR05,LMM11,LLGH15}. We remark that, although the Navier-Stokes-Cahn-Hilliard equations have also been adopted to describe liquid-vapor mixtures (cf. Gurtin~\textit{et al.}~\cite{G96,GPV96} and Oden~\textit{et al.}~\cite{OHP10}), this system can be \textit{reduced} to the Navier-Stokes-Korteweg system by choosing suitable phase transformation rates and capillary stress tensors, see \cite{FK17,LT98}. We mention that this result is physically reasonable because the derivation of Cahn-Hilliard equations~\cite{CH58} are based on the Ginzburg-Landau phase transition theory~\cite{GL09}, which is essentially owing to van der Waals's idea~\cite{R79}. \par 
The diffuse interface model takes the density $\varrho$ as an order parameter to identify the phases. As far as explaining a background of phase transitions on the fluid, let $\varrho$ be a smooth function and bounded satisfying $0 < \varrho < b$ with some constant $b > 0$. Furthermore, let $T_* > 0$ be a constant standing a fixed temperature such that the fluid can be both vapor and liquid states. For this fixed temperature $T_*$, we define the Hemholtz free energy $W \in \C^2 ((0, b))$ by $W (\varrho) := \varrho w (\varrho)$, where $w (\,\cdot\,) \in \C^2 ((0, b))$ is some given function depending only on physical properties of the fluid. If the fluid allows phase transitions, the functional $W$ \textit{should} satisfy the following conditions:
\begin{enumerate}
\item The Helmholtz energy $W (\varrho)$ is a double-well potential. Namely, there exist constants $a_1, a_2 \in (0, b)$ such that $W'' (\varrho) > 0$ in $\varrho \in (0, a_1) \cup (a_2, b)$ and $W'' (\varrho) < 0$ in $\varrho \in (a_1, a_2)$.
\item For all $\varrho \in (0, b)$, we have $W (\varrho) \ge 0$ and $\lim_{\varrho \downarrow 0} W (\varrho) = \lim_{\varrho \uparrow b} W (\varrho) = + \infty$.
\end{enumerate}
Then, the phase of fluid with the density $\varrho$ can be classified into vapor, spinodal, and liquid phases if the value of the density $\varrho$ is contained in the interval $(0, a_1]$, $(a_1, a_2)$, and $[a_2, b)$, respectively. From the usual thermodynamical theory, it is known that the pressure $P$ can be written by
\begin{align*}
P (\varrho) = \varrho W' (\varrho) - W (\varrho) \quad \text{for $\varrho \in (0, b)$}.
\end{align*}
We notice that the pressure $P$ is still positive but its derivative may be negative, that is, the pressure is a \textit{non-monotone function} because $P' (\varrho) = \varrho W'' (\varrho)$ and the sign of $W'' (\varrho)$ may change. We remark that the most typical model describing liquid-vapor phase transitions is derived from the \textit{van der Waals law}: The pressure $P (\varrho)$ is given by
\begin{align}
\label{P}
P (\varrho) := R T_* \frac{b}{b - \varrho} - a \varrho^2,
\end{align}
where $R$ is the specific gas constant and $a, b$ are some positive constants independent of $\varrho$ and depending only on the physical quantity of the fluid. Defining
\begin{align}
\label{W}
W (\varrho) := \varrho \left(\int^\varrho_{\rho_*} \frac{P (\vartheta)}{\vartheta} \,\mathrm{d}\vartheta - \frac{P (\rho_*)}{\rho_*} \right),
\end{align}
we observe that the functional \eqref{P} and \eqref{W} satisfy the required properties such that phase transitions occur. Notice that $W$ is convex when the pressure $P$ is non-decreasing, which includes the standard isentropic pressure laws of $\gamma$-type. Summing up, our assumption $P'(\rho_*) \ge 0$ denotes that the fluid is not spinodal states but liquid or vapor states, which allows the \textit{critical} case $P'(\rho_*) = 0$. For more information on the background of phase transitions, we refer to \cite{AMW98,CDMR05,O02,R05} and references therein. The author remark that it is expected to show \textit{unstable} global-in-time solutions to the system \eqref{2.1.1} in the case when $P'(\rho_*) < 0$ because the fluid is spinodal phases---both vapor and liquid coexist precariously in the thermodynamical sense. Indeed, when $P'(\rho_*) < 0$, there exists an eigenvalue of the corresponding linearised equations to~\eqref{2.1.1} such that its real part is positive for some frequencies. For the local existence result on the case $P'(\rho_*) < 0$, we refer to Charve~\cite{C14}. \par
Concerning the Cauchy problem \eqref{2.1.1} on the whole space $\BR^n$ $(n \ge 2)$, the existence of global-in-time solutions has been shown by several authors, see \cite{CDX18,C16,CH11,CH13,CK19,DD01,HL94,HL96,HL96b,HPZ18,KT19,TWX12}. We emphasize that \textit{all} these results were obtained in the $\L^2$ framework because their proof strongly rely on the energy method or the Plancherel theorem, which is only applicable within the $\L^2$ framework in general. Hence, another new technique is required to show a unique global solvability to the system \eqref{2.1.1} in the $\L^p$-in-time and $\L^q$-in-space framework. Recently, Murata and Shibata~\cite{MS19} overcame this difficulty by using the maximal regularity theory and proved that the system~\eqref{2.1.1} admits a global-in-time solution $(\varrho, \bv)$ satisfying
\begin{align*}
\varrho - \rho_* & \in \W^{1, p} ((0, \infty); \W^{1, q} (\BR^n)) \cap \L^p ((0, \infty); \W^{3, q} (\BR^n)), \\
\bv & \in\W^{1, p} ((0, \infty); \L^q (\BR^n)^n) \cap \L^p ((0, \infty); \W^{2, q} (\BR^n)^n)
\end{align*}
with $\bmm = \varrho \bv$ and some given constant $\rho_*$ by assuming $2 < p < \infty$ and $n < q < \infty$ with $2 / p + n / q < 1$, whose function spaces are said to be the \textit{maximal regularity class}. However, this result does not cover any $\L^2$ framework result because the assumptions on $(p, q)$ guarantee some embedding properties in order to bound the nonlinear terms in a suitable norm, so that it does not admit to take $(p, q) = (2, 2)$. We remark that Charve~\textit{et al.}~\cite{CDX18} establish the global existence theorem in the $\L^p$ framework, but their result imposed the $\L^2$ integrability for initial data with respect to space variables on the low frequencies. \par
The aim of this paper is to show a unique global solvability of the system \eqref{2.1.1} in the $\L^p$-in-time and $\L^q$-in-space framework, especially in the maximal regularity class including the $\L^2$ framework results. In this paper, especially, we seek a solution~$(\varrho, \bmm)$ to the system~\eqref{2.1.1} around a constant state $(\rho_*, 0)$. To this end, we suppose that the initial density can be written in the from of $\varrho_0 (x) = \rho_* + \pi_0 (x)$ with some given function $\pi_0 (x)$. To simplify the notation, we normalize $\rho_*$ to $1$ by an appropriate rescaling. The following is the main theorem of this paper.
\begin{theo}
\label{Th-2.1.1}
Let $\mu$, $\nu$, $\kappa$ satisfy \eqref{cond-1}. Suppose that $P$ is a given smooth function with respect to $\varrho$ such that $P'(1) \ge 0$. If $P'(1) = 0$, we additionally assume that $(\mu + \nu)^2 \ge 4 \kappa$. Let $p$ and $q$ satisfy	
\begin{align}
\label{cond-212}
(p, q) = (1, 2) \times (1, \infty) \quad \text{or} \quad (p, q) = \{2\} \times (1, 2]
\end{align}
and $s$ satisfy
\begin{align}
\label{cond-s}
\begin{cases}
s > n/q & \text{if $q \le n$}, \\
s \ge 1 & \text{if $q > n$}.
\end{cases}
\end{align}
There exists a positive constant $C$ such that if $\lVert \varrho_0 - 1 \rVert_{\H^{s + 1, q} (\BR^n)} + \lVert \bmm_0 \rVert_{\H^{s, q} (\BR^n)} < C$ holds, then the system \eqref{2.1.1} admits a unique global strong solution $(\varrho, \bmm)$ satisfying
\begin{align*}
\varrho - 1 & \in \W^{1, p} ((0, \infty); \H^{s, q} (\BR^n)) \cap \L^p ((0, \infty); \H^{s + 2, q} (\BR^n)), \\
\bmm & \in \W^{1, p} ((0, \infty); \H^{s - 1, q} (\BR^n)^n) \cap \L^p ((0, \infty); \H^{s + 1, q} (\BR^n)^n).
\end{align*}	
\end{theo}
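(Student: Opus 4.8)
\noindent\emph{Sketch of the argument.}
The plan is to view \eqref{2.1.1} as a small perturbation of its linearization around the constant state $(1,\boldsymbol{0})$, to establish a \emph{global-in-time} $\L^{p}$--$\L^{q}$ maximal regularity estimate for that linearization, and to close the argument by a Banach fixed point in the solution class of the theorem. Writing $\varrho=1+\pi$ and using the identity $\dv\bK(\varrho)=\kappa\varrho\nabla\Delta\varrho$, together with $\bmm/\varrho=\bmm+\bmm\,g(\pi)$ where $g(\pi):=(1+\pi)^{-1}-1$ and $P(1+\pi)=P(1)+P'(1)\pi+\Pi(\pi)$ with $\Pi(\pi)=\pi^{2}\!\int_{0}^{1}(1-\theta)P''(1+\theta\pi)\,\mathrm{d}\theta$, the mass equation stays linear, $\pd_{t}\pi+\dv\bmm=0$, while the momentum equation becomes
\[
\pd_{t}\bmm-\mu\Delta\bmm-\nu\nabla\dv\bmm+P'(1)\nabla\pi-\kappa\nabla\Delta\pi=\bF(\pi,\bmm),
\]
where $\bF$ gathers $-\dv(\bmm\otimes\bmm/\varrho)$, $\mu\Delta(\bmm\,g(\pi))+\nu\nabla\dv(\bmm\,g(\pi))$, $-\nabla\Pi(\pi)$ and $\kappa\pi\nabla\Delta\pi$, all at least quadratic in $(\pi,\bmm)$. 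It is convenient to rewrite the two top-order pieces $\dv(\bmm\otimes\bmm/\varrho)$ and $\kappa\pi\nabla\Delta\pi$ in divergence form $\dv\boldsymbol{G}$, so that $\boldsymbol{G}$ carries at most one spatial derivative of $\bmm$ and two of $\pi$; this is what keeps the worst nonlinearity affordable at the critical regularity $s>n/q$. No a priori information on $P$ is used: only that $h\mapsto h(1+\pi)$ sends a smooth $h$ into a Moser-controllable map of $\pi$.

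The analytic core --- and, I expect, the main obstacle --- is the following global linear estimate: for $(p,q)$ as in \eqref{cond-212}, $s$ as in \eqref{cond-s}, data $(\pi_{0},\bmm_{0})\in\H^{s+1,q}(\BR^{n})\times\H^{s,q}(\BR^{n})^{n}$ and right-hand side $\bF\in\L^{p}((0,\infty);\H^{s-1,q}(\BR^{n})^{n})$, the linearized system has a unique solution in the class of the theorem, with
\[
\lVert\pi\rVert_{\W^{1,p}(\H^{s,q})\cap\L^{p}(\H^{s+2,q})}+\lVert\bmm\rVert_{\W^{1,p}(\H^{s-1,q})\cap\L^{p}(\H^{s+1,q})}\le C\bigl(\lVert\pi_{0}\rVert_{\H^{s+1,q}}+\lVert\bmm_{0}\rVert_{\H^{s,q}}+\lVert\bF\rVert_{\L^{p}(\H^{s-1,q})}\bigr).
\]
To prove it I would Fourier-transform in $x$: the principal part of the $\bmm$-equation is the strongly elliptic Lam\'e operator by \eqref{cond-1}, its solenoidal part decouples into a harmless heat flow, and its potential part forms with $\pi$ the coupled ``acoustic'' block $(\pi,\dv\bmm)$, governed by the symbol with characteristic roots $\lambda_{\pm}(\xi)=\tfrac12\bigl(-(\mu+\nu)|\xi|^{2}\pm\sqrt{(\mu+\nu)^{2}|\xi|^{4}-4P'(1)|\xi|^{2}-4\kappa|\xi|^{4}}\,\bigr)$, for which $\operatorname{Re}\lambda_{\pm}\lesssim-|\xi|^{2}$ at high frequencies, so the corresponding solution operators are of parabolic type there. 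The delicate regime is small $|\xi|$: when $P'(1)>0$ one has $\lambda_{\pm}=\pm i\sqrt{P'(1)}\,|\xi|-\tfrac{\mu+\nu}{2}|\xi|^{2}+O(|\xi|^{3})$, i.e.\ damped acoustic waves, and the associated Fourier multipliers (involving $e^{\pm i\sqrt{P'(1)}\,t|\xi|}e^{-ct|\xi|^{2}}$ near $\xi=0$) fail the classical H\"ormander--Mikhlin condition along the resonance $\sigma^{2}\simeq P'(1)|\xi|^{2}$ in the time--space frequency variables; they nonetheless define bounded operators on $\L^{p}((0,\infty);\L^{q})$ precisely in the range \eqref{cond-212}, which is where ``$p<2$, any $q$'' versus ``$p=2$, $q\le2$'' originates. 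When $P'(1)=0$ there is no acoustic mode, and the extra hypothesis $(\mu+\nu)^{2}\ge4\kappa$ keeps $\lambda_{\pm}$ real and comparable to $-|\xi|^{2}$, so that the effective fourth-order evolution of $\pi$ factors into a product of two genuine parabolic operators and inherits maximal regularity directly. Assembling the frequency regimes via the operator-valued Mikhlin--Weis multiplier theorem (equivalently, $\mathcal R$-boundedness of the resolvent family) yields the displayed estimate on the half-line.

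With the linear estimate in hand, the nonlinear bounds are routine but tight; write $\mathbb E$ for the solution space of the theorem. Since $s>n/q$ (or $s\ge1$ with $q>n$), every $\H^{\sigma,q}(\BR^{n})$ with $\sigma\ge s-1\ge0$ obeys the fractional Leibniz rule $\lVert fg\rVert_{\H^{\sigma,q}}\lesssim\lVert f\rVert_{\L^{\infty}}\lVert g\rVert_{\H^{\sigma,q}}+\lVert f\rVert_{\H^{\sigma,q}}\lVert g\rVert_{\L^{\infty}}$ and the composition estimate $\lVert h(\pi)\rVert_{\H^{\sigma,q}}\le C(\lVert\pi\rVert_{\L^{\infty}})\lVert\pi\rVert_{\H^{\sigma,q}}$ for smooth $h$ with $h(0)=0$; moreover the time-trace space of the $\pi$-component embeds into $\L^{\infty}((0,\infty);\L^{\infty}(\BR^{n}))$ in every case of \eqref{cond-212}--\eqref{cond-s}, so $\varrho=1+\pi$ stays close to $1$ (hence positive) and $g(\pi),\Pi(\pi),1/\varrho$ are well defined with controlled norms. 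Estimating each term of $\bF$ in $\L^{p}((0,\infty);\H^{s-1,q})$ by H\"older in time and interpolation between $\L^{p}_{t}\H^{s+2,q}$ and the $\L^{\infty}_{t}$ trace bound --- using the divergence-form rewriting for the two top-order terms, which places exactly two derivatives of $\pi$ and one of $\bmm$ inside --- gives $\lVert\bF(\pi,\bmm)\rVert_{\L^{p}(\H^{s-1,q})}\le C\bigl(1+\lVert(\pi,\bmm)\rVert_{\mathbb E}\bigr)\lVert(\pi,\bmm)\rVert_{\mathbb E}^{2}$, with a matching Lipschitz bound for differences. Finally, defining $\Phi(\wt\pi,\wt\bmm)$ as the solution of the linearized system with data $(\varrho_{0}-1,\bmm_{0})$ and forcing $\bF(\wt\pi,\wt\bmm)$, the two inequalities above show that $\Phi$ maps a small closed ball of $\mathbb E$ into itself and is a contraction there once $\lVert\varrho_{0}-1\rVert_{\H^{s+1,q}}+\lVert\bmm_{0}\rVert_{\H^{s,q}}<C$; Banach's fixed point theorem produces the unique global strong solution, and comparing two solutions by the same estimates promotes uniqueness to the whole maximal regularity class. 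The only genuinely sharp step on the nonlinear side is the bound for the capillary nonlinearity $\kappa\pi\nabla\Delta\pi$, which is exactly balanced by the density's regularity gain together with the divergence-form trick; the genuinely hard step overall is the oscillation-sensitive global linear theory of the second paragraph.
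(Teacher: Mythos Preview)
Your overall architecture—linearize around $(1,\boldsymbol{0})$, prove a global $\L^p$--$\L^q$ maximal regularity estimate for the linear semigroup, then close by Banach fixed point using Kato--Ponce and Moser-type composition estimates—is exactly what the paper does, and your treatment of the nonlinear side (divergence-form rewriting of the top-order terms, the bound on $\kappa\pi\nabla\Delta\pi$, the trace embedding for $\L^\infty_t\L^\infty_x$-control of $\pi$) matches the paper's Section~\ref{subsec-2.5} closely.

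Where you diverge from the paper is in the linear theory, and here there is a genuine gap. You propose to obtain the global estimate via the operator-valued Mikhlin--Weis theorem, i.e.\ $\mathcal{R}$-boundedness of the resolvent family. But when $P'(1)>0$ the low-frequency eigenvalues satisfy $\lambda_\pm\sim\pm i\sqrt{P'(1)}\,|\xi|-A|\xi|^2$, so $\lvert\operatorname{Im}\lambda_\pm\rvert/\lvert\operatorname{Re}\lambda_\pm\rvert\to\infty$ as $|\xi|\to0$: the generator is \emph{not} sectorial, the semigroup is not analytic, and the standard $\mathcal{R}$-sectoriality route to maximal regularity is unavailable (and if it were, it would give all $1<p<\infty$, not the restricted range \eqref{cond-212}). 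Your assertion that the oscillatory space--time multipliers are bounded on $\L^p_t\L^q_x$ ``precisely in the range \eqref{cond-212}'' is stated without argument, and it is not how the paper actually arrives at \eqref{cond-212}.

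The paper's mechanism is more elementary and quite different in spirit. After a low/middle/high frequency decomposition it writes $\lambda_\pm(\xi)=-K|\xi|^2+g^{(\pm)}(|\xi|)$ with $K>0$ and $\operatorname{Re}g^{(\pm)}\le0$, so that $e^{t\lambda_\pm}$ factors as the heat semigroup $e^{(K/2)t\Delta}$ composed with a scalar Fourier multiplier that is Mikhlin-bounded. It then applies the \emph{heat equation's} maximal regularity (Lemma~\ref{lem-MR}), which controls $\lVert e^{ct\Delta}v_0\rVert_{\W^{1,p}(\L^q)\cap\L^p(\W^{2,q})}$ by $\lVert v_0\rVert_{\B^{2(1-1/p)}_{q,p}}$. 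The restriction \eqref{cond-212} then enters \emph{not} through any dynamical multiplier bound but through the purely static embedding $\H^{1,q}(\BR^n)\hookrightarrow\B^{2(1-1/p)}_{q,p}(\BR^n)$, which holds exactly when $p<2$ (any $q$) or $p=2$ and $q\le2$. That is the observation you are missing: the oscillation is absorbed into a scalar multiplier and the $(p,q)$-restriction is a trace-space embedding, not a space--time multiplier phenomenon.
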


\begin{rema}
We shall give some comments on Theorem \ref{Th-2.1.1}.
\begin{enumerate}\renewcommand{\labelenumi}{(\arabic{enumi})}
\item The assumption $P'(1) \ge 0$ is a sufficient condition such that eigenvalues of the corresponding linearised system to \eqref{2.1.1} have positive real parts for \textit{all} frequencies. As we mentioned above, this assumption is reasonable from the view point of the thermodynamics. However, if $P'(1) = 0$, we suppose $(\mu + \nu)^2 \ge 4 \kappa$. This additional assumption will guarantee the Fourier multiplier of the solution operator satisfies the Mikhin condition.
\item It suffices to suppose $P$ is a $\C^{[s] + 1}$-function, see Lemma \ref{lem-2.5.2} below.
\item According to the trace method of real interpolation, for $1 < p < \infty$ it hold
\begin{align}
\label{emb-MR}
\begin{aligned}
& \W^{1, p} ((0, \infty); \W^{1, q} (\BR^n)) \cap \L^p ((0, \infty); \W^{3, q} (\BR^n)) \\
& \quad \hookrightarrow \mathrm{BUC} ([0, \infty); \B^{s + 2 - 2/p}_{q, p} (\BR^n)), \\
& \W^{1, p} ((0, \infty); \H^{s - 1, q} (\BR^n)^n) \cap \L^p ((0, \infty); \H^{s + 1, q} (\BR^n)^n) \\
& \quad \hookrightarrow \mathrm{BUC} ([0, \infty); \B^{s + 1 - 2/p}_{q, p} (\BR^n)),
\end{aligned}
\end{align}
see Amann \cite[Theorem III.~4.10.2]{A95} (cf. Tanabe~\cite[pp.~10]{T97}). Hence, Theorem \ref{Th-2.1.1} yields that the system~\eqref{2.1.1} is globally well-posed for small initial data belonging to $\H^{s + 1, q} (\BR^n) \times \H^{s, q} (\BR^n)^n$ assuming that  \eqref{cond-212} and \eqref{cond-s} because we have $\H^{s, q} (\BR^n) \hookrightarrow \B^{s + 1 - 2/p}_{q, p} (\BR^n)$ and $\H^{s + 1, q} (\BR^n) \hookrightarrow \B^{s + 2 - 2/p}_{q, p} (\BR^n)$, see, \textit{e.g.}, Pr{\"u}ss and Simonett~\cite[Chapter~4]{PS16} for further general results.
\end{enumerate}
\end{rema}
The rest of this paper is constructed as follows: The next section introduces notation and lemmas which we use throughout this paper. In Section \ref{subsec-2.3}, we focus on the linearized system and give the results for the corresponding eigenvalues. Section \ref{subsec-2.4} is devoted to show the linear estimates, which play an important role for our discussion on proving Theorem \ref{Th-2.1.1} in Section \ref{subsec-2.5}. 


\section{Preliminaries}
\subsection{Notation}
In this subsection, we introduce symbols and function spaces we use throughout this paper. The symbols $\BN$, $\BR$ and $\BC$ is the set of all natural, real, and complex numbers, respectively. We define $\BN_0 := \BN \cup \{0\}$ and $\BR_+ := (0, \infty)$. \par
For a Banach space $X$, let $X^n$ be the $n$-product space of $X$ and its norm be denoted by $\lVert \, \cdot \, \rVert_X$ for short. The identity mapping on $X$ is denoted by $\bI$ when no confusion is possible. \par
The Fourier transform on $\BR^n$ is defined by
\begin{align*}
\CF[f] (\xi) = \wh f (\xi) := \int_{\BR^n} f (x) e^{- i x \cdot \xi} \dx
\end{align*}
and the inverse Fourier transform of $f$ is defined by
\begin{align*}
\CF^{- 1} [f] (x) := \frac{1}{(2 \pi)^n} \int_{\BR^n} f (x) e^{i x \cdot \xi} \,\mathrm{d}\xi,
\end{align*}
where $x \cdot \xi$ is the dot product defined by $x \cdot \xi := x_1 \xi_1 + \cdots + x_n \xi_n$ for $x = (x_1, \dots, x_n)$ and $\xi = (\xi_1, \dots, \xi_n)$. \par
The symbols $\L^p (\BR^n)$, $\H^{s, p} (\BR^n)$, and $\B^s_{q, p} (\BR^n)$ stand the usual Lebesgue, Bessel potential, and Besov spaces, respectively. For a Banach space $X$, we denote $\L^p (I; X)$ by the standard Bochner-Lebesgue spaces, where $I \subset \BR_+$ is an open time interval and $T > 0$. Similarly, functions spaces $\W^{1, p} (I; X)$ describe the set of all functions $u$ such that $u, \pd_t u \in \L^p (I; X)$.

\subsection{Auxiliary lemmas}
In this subsection, we give auxiliary lemmas we use in below. It is well-known that the negative of the Laplace operator admits the \textit{maximal regularity}, see, \textit{e.g.}, Hieber and Pr{\"u}ss~\cite{HP97} and Pr{\"u}ss and Simonett~\cite[Chapter 4]{PS16}.
\begin{lemm}
\label{lem-MR}
Let $1 < p, q < \infty$ and $f \colon \BR_+ \to \L^p (\BR_+ ; \L^q (\BR^n))$ be a given function. Consider the inhomogeneous heat equation:
\begin{align}
\label{eq-heat}
\pd_t u - \Delta u = f, \qquad u (x, 0) = u_0 (x)
\end{align}
for $u_0 \in \B^{2 (1 - 1/p)}_{q, p} (\BR^n)$. Then the problem \eqref{eq-heat} admits a unique strong solution $u$ in the maximal regularity class,
\begin{align*}
u \in \W^{1, p} (\BR_+ ; \L^q (\BR^n)) \cap \L^p (\BR_+; \W^{2, q} (\BR^n)).
\end{align*}
Especially, we have the estimate
\begin{align*}
& \lVert u \rVert_{\L^p (\BR_+ ; \L^q (\BR^n))} + \lVert \pd_t u \rVert_{\L^p (\BR_+ ; \L^q (\BR^n))} + \lVert (- \Delta) u \rVert_{\L^p (\BR_+ ; \L^q (\BR^n))} \\
& \quad \le C \Big(\lVert u_0 \rVert_{\B^{2 (1 - 1/p)}_{q, p}(\BR^d)} + \lVert f \rVert_{\L^p (\BR_+ ; \L^q (\BR^n))}\Big)
\end{align*}
with some positive constant $C$.
\end{lemm}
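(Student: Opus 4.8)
\emph{Plan of the proof.} The plan is to deduce the statement from the abstract theory of maximal $\L^p$-regularity for sectorial operators. Set $A := -\Delta$ with domain $D(A) = \W^{2, q}(\BR^n)$, and decompose $u = v + w$, where $v(t) := e^{t \Delta} u_0$ solves the homogeneous problem with initial value $u_0$ and $w(t) := \int_0^t e^{(t - s) \Delta} f(s)\,\mathrm{d}s$ solves the inhomogeneous problem with zero initial value. For the homogeneous part I would use the trace method of real interpolation: since $\pd_t v = - A v$, the requirement $v \in \W^{1, p}(\BR_+; \L^q(\BR^n)) \cap \L^p(\BR_+; D(A))$ is equivalent, by the classical trace characterization of the real interpolation space via the analytic semigroup, to $u_0 \in (\L^q(\BR^n), D(A))_{1 - 1/p, p}$, and this space is precisely $\B^{2(1 - 1/p)}_{q, p}(\BR^n)$ by the standard identification of real interpolation spaces between $\L^q(\BR^n)$ and $\W^{2, q}(\BR^n)$. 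This accounts for the exact Besov regularity imposed on the data.

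The heart of the matter is the inhomogeneous term $w$. The operator $A = -\Delta$ generates a bounded analytic $C_0$-semigroup on $\L^q(\BR^n)$, but analyticity alone does not yield maximal $\L^p$-regularity on a general Banach space; one needs the $\mathcal{R}$-boundedness version. By Weis's operator-valued Fourier multiplier theorem it suffices to check that $A$ is $\mathcal{R}$-sectorial of some angle strictly less than $\pi/2$ on $\L^q(\BR^n)$; granting this, the solution operator $f \mapsto \bigl(t \mapsto A \int_0^t e^{(t - s) \Delta} f(s)\,\mathrm{d}s\bigr)$ is bounded on $\L^p(\BR_+; \L^q(\BR^n))$, and the time derivative is then recovered directly from the equation through $\pd_t w = \Delta w + f$. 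To verify the $\mathcal{R}$-sectoriality I would argue either (i) that $-\Delta$ admits a bounded $H^\infty$-functional calculus on $\L^q(\BR^n)$ for every $1 < q < \infty$ --- the Fourier multiplier $m(|\xi|^2)$ of $m(-\Delta)$ obeys a Mikhlin--H\"ormander estimate with constant controlled by the $H^\infty$-norm of $m$ on a sector containing the positive real axis, and since $\L^q(\BR^n)$ is a UMD space the Kalton--Weis/Cl\'ement--Pr\"uss machinery upgrades the bounded calculus to $\mathcal{R}$-sectoriality --- or (ii) directly, by noting that the kernels of $\lambda (\lambda - \Delta)^{-1}$ are, up to dilation, dominated pointwise in absolute value by one fixed radially nonincreasing $\L^1$ function, so the resolvent family is pointwise dominated by the Hardy--Littlewood maximal operator and hence $\mathcal{R}$-bounded.

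Uniqueness is routine: the difference of two solutions lies in the maximal regularity class and solves the homogeneous problem with zero data, hence, by the Duhamel representation (or by a density/testing argument using that $A$ is injective with dense range), it vanishes identically. The stated a priori estimate is simply the quantitative form of the two boundedness statements above, namely $\lVert v \rVert_{\W^{1, p}(\BR_+; \L^q) \cap \L^p(\BR_+; \W^{2, q})} \lesssim \lVert u_0 \rVert_{\B^{2(1 - 1/p)}_{q, p}(\BR^n)}$ together with $\lVert w \rVert_{\W^{1, p}(\BR_+; \L^q) \cap \L^p(\BR_+; \W^{2, q})} \lesssim \lVert f \rVert_{\L^p(\BR_+; \L^q)}$, added up. The main obstacle, and the only non-soft point, is the $\mathcal{R}$-boundedness step: maximal $\L^p$-regularity is strictly stronger than analyticity of the heat semigroup and genuinely uses the Banach-space geometry of $\L^q(\BR^n)$, so some form of an operator-valued multiplier theorem or an explicit kernel-domination argument cannot be avoided; everything else reduces to bookkeeping with real interpolation and Duhamel's formula.
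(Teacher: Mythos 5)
Your argument is correct and is precisely the standard proof that the paper relies on: the paper gives no proof of Lemma \ref{lem-MR} at all, deferring to Hieber--Pr\"uss and Pr\"uss--Simonett, and those references proceed exactly as you do (trace-method real interpolation identifying $(\L^q, \W^{2,q})_{1-1/p,p} = \B^{2(1-1/p)}_{q,p}$ for the initial value, and $\mathcal{R}$-sectoriality of $-\Delta$ via the operator-valued multiplier theorem or kernel domination for the Duhamel term). One caveat, inherited from the lemma's own statement rather than from your strategy: on the infinite interval $\BR_+$ the zeroth-order term $\lVert u \rVert_{\L^p(\BR_+;\L^q(\BR^n))}$ is not in fact controlled by the right-hand side, since $-\Delta$ is not invertible on $\L^q(\BR^n)$, so your claimed bound on the full $\W^{1,p}\cap\L^p(\W^{2,q})$-norm of $w$ should be read as a bound on $\lVert \pd_t w\rVert_{\L^p(\BR_+;\L^q)}$ and $\lVert \Delta w\rVert_{\L^p(\BR_+;\L^q)}$ only.
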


We shall recall the \textit{Bernstein-type inequalities} which are convenient when we bound spectrally localized functions. The proof can be seen in Bahouri \textit{et al.}~\cite[Lemma 2.1]{BCD}.
\begin{lemm}
\label{lem-2.2}	
For $0 < r < R < \infty$. There exists a positive constant $C$ such that for any $k \in \BN_0$, for any $1 \le p \le q \le \infty$ and $f \in \L^p (\BR^n)$, and for some $\lambda > 0$, we have
\begin{align*}
\supp \wh f (\xi) \subset \{\xi \in \BR^n \colon \lvert \xi \rvert \le \lambda r \} & \Longrightarrow \sup_{\lvert a \rvert = k} \lVert \pd^a_x f \lVert_{\L^q (\BR^n)} \le C^k \lambda^{k + 3 (\frac{1}{p} - \frac{1}{q})} \lVert f \rVert_{\L^p (\BR^n)}, \\
\supp \wh f (\xi) \subset \{\xi \in \BR^n \colon \lambda r \le \lvert \xi \rvert \le \lambda R \} & \Longrightarrow C^{- k} \lambda^k \lVert f \rVert_{\L^p (\BR^n)} \le \sup_{\lvert a \rvert = k} \lVert \pd^a_x f \rVert_{\L^p (\BR^n)} \le C^k \lambda^k \lVert f \rVert_{\L^p (\BR^n)},
\end{align*}
where $s \in \BN_0^n$ is a multi-index. Here, the constant $C$ is independent of $\lambda$ and $f$.
\end{lemm}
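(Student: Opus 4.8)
Since the statement is a classical fact (quoted here from Bahouri--Chemin--Danchin), I would recover it by the standard reduction of a spectrally localized function to a convolution against a fixed, rescaled kernel. Fix cut-offs $\chi,\psi\in\C^\infty_c(\BR^n)$ with $\chi\equiv1$ on $\{\lvert\xi\rvert\le r\}$ and $\supp\chi$ compact, and $\psi\equiv1$ on $\{r\le\lvert\xi\rvert\le R\}$ with $\supp\psi\subset\{r/2\le\lvert\xi\rvert\le2R\}$. If $\supp\wh f\subset\{\lvert\xi\rvert\le\lambda r\}$, then $\wh f=\chi(\cdot/\lambda)\wh f$, so for a multi-index $a$,
\[
\pd^a_x f=\CF^{-1}\big[(i\xi)^a\chi(\xi/\lambda)\wh f\big]=\lambda^{\lvert a\rvert}\big[\lambda^n g_a(\lambda\,\cdot)\big]*f,\qquad g_a:=\CF^{-1}\big[(i\xi)^a\chi\big],
\]
and Young's inequality with $1+\tfrac1q=\tfrac1p+\tfrac1\sigma$, together with $\lVert\lambda^n g_a(\lambda\,\cdot)\rVert_{\L^\sigma}=\lambda^{n(1/p-1/q)}\lVert g_a\rVert_{\L^\sigma}$, gives $\lVert\pd^a_x f\rVert_{\L^q(\BR^n)}\le\lambda^{\lvert a\rvert+n(1/p-1/q)}\lVert g_a\rVert_{\L^\sigma}\lVert f\rVert_{\L^p(\BR^n)}$. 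The upper bound on the annulus follows verbatim with $\chi$ replaced by $\psi$ and Young's inequality used as $\L^1*\L^p\hookrightarrow\L^p$, producing $\lVert\pd^a_x f\rVert_{\L^p}\le\lambda^{\lvert a\rvert}\lVert\CF^{-1}[(i\xi)^a\psi]\rVert_{\L^1}\lVert f\rVert_{\L^p}$.

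What makes both bounds work is the uniform estimate $\lVert g_a\rVert_{\L^\sigma(\BR^n)}\le C^k$ for $\lvert a\rvert=k$ and $\sigma\in[1,\infty]$, with $C=C(n,r,R)$. I would get it from the weighted identity $(1+\lvert x\rvert^2)^n g_a(x)=\CF^{-1}\big[(1-\Delta_\xi)^n\big((i\xi)^a\chi\big)\big](x)$, whence $\lvert g_a(x)\rvert\le(1+\lvert x\rvert^2)^{-n}\big\lVert(1-\Delta_\xi)^n\big((i\xi)^a\chi\big)\big\rVert_{\L^1}$; since $(1+\lvert x\rvert^2)^{-n}\in\L^1(\BR^n)\cap\L^\infty(\BR^n)$, every $\L^\sigma$ norm of $g_a$ is controlled by $\lVert(1-\Delta_\xi)^n((i\xi)^a\chi)\rVert_{\L^1}$. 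Because $n$ is fixed, at most $2n$ $\xi$-derivatives ever fall on the monomial $(i\xi)^a$, each contributing a factor $\le k$, while the surviving powers of $\xi$ are restricted to the fixed compact set $\supp\chi$ and are hence bounded by $M^k$; thus this $\L^1$ norm is $\le C_n k^{2n}M^k\lvert\supp\chi\rvert\le C^k$ after absorbing the polynomial factor into the geometric one.

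Only the lower bound on the annulus needs a genuinely new ingredient, namely reconstructing $f$ from its order-$k$ derivatives. On $\supp\wh f\subset\{\lambda r\le\lvert\xi\rvert\le\lambda R\}$ the symbol $\psi(\xi/\lambda)\lvert\xi/\lambda\rvert^{-2k}$ is a bona fide $\C^\infty_c$ function (it is supported away from the origin), and using the multinomial identity $1=\lvert\xi/\lambda\rvert^{-2k}\sum_{\lvert a\rvert=k}\tfrac{k!}{a!}(\xi/\lambda)^{2a}$ on $\supp\wh f$ together with $\xi^a\wh f=(-i)^k\wh{\pd^a f}$, I would write
\[
\wh f(\xi)=(-i)^k\lambda^{-k}\sum_{\lvert a\rvert=k}\frac{k!}{a!}\,n_a(\xi/\lambda)\,\wh{\pd^a f}(\xi),\qquad n_a(\eta):=\psi(\eta)\,\lvert\eta\rvert^{-2k}\,\eta^a,
\]
so that $f=(-i)^k\lambda^{-k}\sum_{\lvert a\rvert=k}\tfrac{k!}{a!}\big[\lambda^n(\CF^{-1}n_a)(\lambda\,\cdot)\big]*\pd^a f$. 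Young's inequality, the same $C^k$ bound for $\lVert\CF^{-1}n_a\rVert_{\L^1}$ (now differentiating $\lvert\eta\rvert^{-2k}$ using that $\supp\psi$ stays at distance $\ge r/2$ from the origin, each derivative costing only a power of $k$), and $\sum_{\lvert a\rvert=k}\tfrac{k!}{a!}=n^k$ then give $\lVert f\rVert_{\L^p(\BR^n)}\le(nC)^k\lambda^{-k}\sup_{\lvert a\rvert=k}\lVert\pd^a f\rVert_{\L^p(\BR^n)}$, which is the asserted lower bound after renaming the constant.

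The one point that must be handled with care---and the reason a naive argument fails---is keeping every kernel-norm estimate \emph{geometric} in $k$ rather than factorial: a careless Leibniz expansion of $(1-\Delta_\xi)^n((i\xi)^a\chi)$, or of $\pd^\beta_\eta\lvert\eta\rvert^{-2k}$, threatens a $k!$. In both places this is defused by the same observation: the number of derivatives that ever hit the $k$-dependent factor never exceeds the fixed integer $2n$, so only $O(k^{2n})$ powers of $k$ are generated, and these are swamped by the exponential factor $M^k$ coming from evaluation on a fixed compact frequency set.
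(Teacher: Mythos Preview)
The paper does not give its own proof of this lemma; it simply cites Bahouri--Chemin--Danchin \cite{BCD}, Lemma~2.1. Your argument is precisely the standard proof found there: reduce the spectrally localized function to a convolution with a rescaled Schwartz kernel, apply Young's inequality, and for the lower bound invert the symbol on the annulus via $\lvert\xi\rvert^{-2k}\sum_{\lvert a\rvert=k}\tfrac{k!}{a!}\xi^{2a}=1$. There is nothing to compare.

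One incidental remark: your exponent $n(1/p-1/q)$ is the correct one for $\BR^n$; the ``$3(\tfrac1p-\tfrac1q)$'' appearing in the paper's statement is a typo carried over from a three-dimensional source.
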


\section{Linearized system}
\label{subsec-2.3}
\noindent
In this section, we introduce the linearised system associated with \eqref{2.1.1}. Noting $\varrho = 1 + \pi$, we see that the system~\eqref{2.1.1} can be reformulated in the form of
\begin{align}
\label{eq-221}
\left\{
\begin{aligned}
\pd_t \pi + \dv \bmm & = 0, \\
\pd_t \bmm - \big(\mu \Delta \bmm + \nu \nabla \dv \bmm - \gamma \nabla \pi + \kappa \nabla \Delta \pi \big) & = \bF (\pi, \bmm), \\
(\pi, \bmm) \rvert_{t = 0} & = (\pi_0, \bmm_0),
\end{aligned}\right.
\end{align}
with
\begin{align*}
\bF (\pi, \bmm) & = - \dv \bigg(\frac{1}{1 + \pi} \bmm \otimes \bmm \bigg) - \mu \Delta \bigg(\frac{\pi}{1 + \pi} \bmm \bigg) - \nu \nabla \dv \bigg(\frac{\pi}{1 + \pi} \bmm \bigg) \\
& \quad - (G (\pi) + P''(1) \pi) \nabla \pi  - \kappa \dv \left\{\left((\pi \Delta \pi) - \frac{\lvert \nabla \pi \rvert^2}{2}\right) \bI - \nabla \pi \otimes \nabla \pi \right\},
\end{align*}
where $\gamma = P'(1)$ and
\begin{align*}
G (\theta) = P' (1 + \theta) - \gamma - P''(1) \theta \qquad \text{for $\theta \in \BR$}.
\end{align*}
We remark that $G (0) = G'(0) = 0$ and $G$ is independent of $t$. We then obtain the linearized system of~\eqref{eq-221}:
\begin{align}
\label{eq-222}
\left\{
\begin{aligned}
\pd_t
\begin{pmatrix}
\phi \\ \bu
\end{pmatrix}
- \begin{pmatrix}
0 & - \dv \\
- \gamma \nabla + \kappa \nabla \Delta & \mu \Delta \bI + \nu \nabla \dv
\end{pmatrix}
\begin{pmatrix}
\phi \\ \bu
\end{pmatrix}
& =
0, \\
(\phi, \bu) \rvert_{t = 0} & = (\phi_0, \bu_0),
\end{aligned}\right.
\end{align}
where $\bu (x, t) = {}^\top\!(u_1 (x, t), \dots, u_n (x, t))$ and $\bu_0 (x) = {}^\top\!(u_{0, 1} (x), \dots, u_{0, n} (x))$. We mention that it is fail to adopt a general theory for symmetric hyperbolic-parabolic systems established by Shizuta and Kawashima~\cite{SK85} due to the capillary term. Applying the Fourier transform with respect to $x$ implies
\begin{align}
\label{eq-223}
\left\{
\begin{aligned}
\pd_t
\begin{pmatrix}
\wh \phi \\ \wh \bu
\end{pmatrix}
- \begin{pmatrix}
0 & - \mathrm{i} {}^\top\! \xi \\
- (\gamma + \kappa \lvert \xi \rvert^2) \mathrm{i} \xi & - \mu \lvert \xi \rvert^2 \bI - \nu \xi {}^\top\! \xi
\end{pmatrix}
\begin{pmatrix}
\wh \phi \\ \wh \bu
\end{pmatrix}
& =
0, \\
(\wh \phi, \wh \bu) \rvert_{t = 0} & = (\wh \phi_0, \wh \bu_0).
\end{aligned}\right.
\end{align}
Let $\det (\lambda)$ be the polynomial of $\lambda \in \BC$ defined by
\begin{align}
\label{224}
\det (\lambda) := \lambda^2 + (\mu + \nu) \lvert \xi \rvert^2 \lambda + (\gamma + \kappa \lvert \xi \rvert^2) \lvert \xi \rvert^2.
\end{align}
The equation $\det (\lambda )= 0$ has two roots:
\begin{equation}
\begin{split}
\label{225}
\lambda_\pm (\xi) & = - \frac{\mu + \nu}{2} \lvert \xi \rvert^2 \pm \frac{\mu + \nu}{2} \sqrt{\lvert \xi \rvert^4 - \frac{4 (\gamma + \kappa \lvert \xi \rvert^2) \lvert \xi \rvert^2}{(\mu + \nu)^2}} \\
& = - \frac{\mu + \nu}{2} \lvert \xi \rvert^2 \pm \frac{\mu + \nu}{2} \sqrt{\left(1 - \frac{4 \kappa}{(\mu + \nu)^2}\right) \lvert \xi \rvert^4 - \frac{4 \gamma \lvert \xi \vert^2}{(\mu + \nu)^2}} \\
& =: - A \lvert \xi \rvert^2 \pm A \sqrt{\left(1 - \frac{\kappa}{A^2}\right) \lvert \xi \rvert^4 - \frac{\gamma}{A^2} \lvert \xi \rvert^2},
\end{split}
\end{equation}
where we have set $A := - (\mu + \nu)/2$ for short. The behaviors of $\lambda_\pm (\xi)$ are classified into the following cases:
\begin{alignat*}4
& \textbf{Case 1:} &\enskip  \text{$A^2 > \kappa$ and $\gamma > 0$}; & \qquad \textbf{Case 2:} &\enskip  \text{$A^2 < \kappa$ and $\gamma > 0$}; \\
& \textbf{Case 3:} &\enskip  \text{$A^2 = \kappa$ and $\gamma > 0$}; & \qquad \textbf{Case 4:} &\enskip  \text{$A^2 > \kappa$ and $\gamma = 0$}; \\
& \textbf{Case 5:} &\enskip  \text{$A^2 < \kappa$ and $\gamma = 0$}; & \qquad \textbf{Case 6:} &\enskip  \text{$A^2 = \kappa$ and $\gamma = 0$}.
\end{alignat*}
In this paper, we exclude Case 5.
\par
Let us focus on Case 1. We see that $\lambda_+ (\xi) = \lambda_- (\xi)$ holds if $\xi$ satisfies
\begin{align*}
\lvert \xi \rvert = \sqrt{\frac{\gamma}{A^2 - \kappa}} =: B.
\end{align*}
Noting \eqref{225}, we observe that
\begin{align}
\label{226}
\lambda_\pm (\xi) =
\left\{
\begin{aligned}
& - A \lvert \xi \rvert^2 \pm \mathrm{i} \sqrt \gamma \lvert \xi \rvert \sqrt{1 - \frac{\lvert \xi \rvert^2}{B^2}} & \enskip & \text{for $0 < \lvert \xi \rvert < B$}, \\
& - A \lvert \xi \rvert^2 \pm \frac{1}{\sqrt \gamma B} \lvert \xi \rvert^2 \sqrt{1 - \frac{B^2}{\lvert \xi \rvert^2}} & \enskip & \text{for $\lvert \xi \rvert > B$}. \\
\end{aligned}
\right.
\end{align}
Especially, in this case, the eigenvalues $\lambda_\pm (\xi)$ have the following asymptotic behaviors:
\begin{align*}
\lambda_\pm (\xi) =
\left\{
\begin{aligned}
& \pm \mathrm{i} \sqrt \gamma \lvert \xi \rvert - A \lvert \xi \rvert^2 \mp \mathrm{i} \frac{\sqrt \gamma}{B^2} \lvert \xi \rvert^3 + \mathrm{i} O(\lvert \xi \rvert^5) & \quad & \text{as $\lvert \xi \rvert \to 0$}, \\
& - A \left(1 \mp \sqrt{1 - \frac{\kappa}{A^2}}\right) \lvert \xi \rvert^2 \mp \frac{\sqrt \gamma}{2 B} + O\bigg(\frac{1}{\lvert \xi \rvert^2}\bigg) & \quad & \text{as $\lvert \xi \rvert \to \infty$}
\end{aligned}
\right.
\end{align*}
and the solution to \eqref{eq-223} is represented by
\begin{equation}
\label{227}
\begin{split}
\wh \phi (\xi, t) & = \left(\cfrac{\lambda_+ (\xi) e^{\lambda_- (\xi) t} - \lambda_- (\xi) e^{\lambda_+ (\xi) t}}{\lambda_+ (\xi) - \lambda_- (\xi)}\right) \wh \phi_0 (\xi) \\
& \quad - \left(\cfrac{e^{\lambda_+ (\xi) t} - e^{\lambda_- (\xi) t}}{\lambda_+ (\xi) - \lambda_- (\xi)}\right) (\mathrm{i} \xi \cdot \wh \bu_0 (\xi)), \\
\wh \bu (\xi, t) & = - \left(\cfrac{e^{\lambda_+ (\xi) t} - e^{\lambda_- (\xi) t}}{\lambda_+ (\xi) - \lambda_- (\xi)} \right) (\gamma + \kappa \lvert \xi \rvert^2) \mathrm{i} \xi \wh \phi_0 (\xi) + e^{- \mu \lvert \xi \rvert^2 t} \wh \bu_0 (\xi) \\
& \quad + \left(\cfrac{\lambda_+ (\xi) e^{\lambda_+ (\xi) t} - \lambda_- (\xi) e^{\lambda_- (\xi) t}}{\lambda_+ (\xi) - \lambda_- (\xi)} - e^{- \mu \lvert \xi \rvert^2 t} \right) \cfrac{\xi (\xi \cdot \wh \bu_0 (\xi))}{\lvert \xi \rvert^2}
\end{split}
\end{equation}
for $\xi \in \BR^n \setminus \{0\}$. In our analysis, it is convenient to rewrite as
\begin{equation}
\label{228}
\begin{split}
\wh \phi (\xi, t) & = \sum_{\ell = \pm} e^{\lambda_\ell (\xi) t} G_{1, 1}^{(\ell)} (\xi) \wh \phi_0 (\xi) + \sum_{k = 1}^n \sum_{\ell = \pm} e^{\lambda_\ell (\xi) t} G_{1, k + 1}^{(\ell)} (\xi) \wh u_{0, j} (\xi), \\
\wh u_j (\xi, t) & = \sum_{\ell = \pm} e^{\lambda_\ell (\xi) t} G_{j + 1, 1}^{(\ell)} (\xi) \wh \phi_0 (\xi) + e^{- \mu \lvert \xi \rvert^2 t} \wh u_{0, j} (\xi) \\
& \quad + \sum_{k = 1}^n \bigg(\sum_{\ell = \pm} e^{\lambda_\ell (\xi) t} G_{j + 1, k + 1}^{(\ell)} (\xi) + e^{- \mu \lvert \xi \rvert^2 t} \frac{\xi_j \xi_k}{\lvert \xi \rvert^2} \bigg) \wh u_{0, k} (\xi)
\end{split}
\end{equation}
for $j = 1, \dots, n$, where $G_{JK}^{(\ell)} (\xi)$, $J, K = 1, \dots, n + 1$, are defined by
\begin{align}
\label{229}
\begin{aligned}
G^{(\pm)}_{1, 1} & = \mp \frac{\lambda_\mp (\xi)}{\lambda_+ (\xi) - \lambda_- (\xi)}, & \quad G^{(\pm)}_{1, k + 1} & = \mp \frac{\mathrm{i} \xi_k}{\lambda_\pm (\xi) - \lambda_- (\xi)}, \\
G^{(\pm)}_{j + 1, 1} & = \mp \frac{\lambda_\pm (\xi) (\gamma + \kappa \lvert \xi \rvert^2) \mathrm{i} \xi_j}{\lambda_+ (\xi) - \lambda_- (\xi)}, & \quad 
G^{(\pm)}_{j + 1, k + 1} & = \pm \frac{\lambda_\pm (\xi) \xi_j \xi_k}{\lvert \xi \rvert^2 (\lambda_+ (\xi) - \lambda_- (\xi))}.
\end{aligned}
\end{align}
We notice that, for Case 1, these formulas hold if $\lvert \xi \rvert \neq B$. On the other hand, if $B/2 < \lvert \xi \rvert < 2B$, we write the solution to \eqref{eq-223} in the form of
\begin{equation}
\label{2210}
\begin{split}
\wh \phi (\xi, t)
& = \left(\frac{1}{2 \pi \mathrm{i}}\oint_\Gamma \frac{(z + 2 A \lvert \xi \rvert^2) e^{z t}}{\det (z)} \,\mathrm{d}z \right) \wh \phi_0 (\xi) + \left(\frac{1}{2 \pi} \oint_\Gamma \frac{e^{z t}}{\det (z)} \,\mathrm{d}z \right) \xi \cdot \wh \bu_0 (\xi), \\
\wh \bu (\xi, t)
& = - \left(\frac{1}{2 \pi} \oint_\Gamma \frac{e^{z t}}{\det (z)} \,\mathrm{d}z \right) (\gamma + \kappa \lvert \xi \rvert^2)  \wh \phi_0 (\xi) + e^{- \mu \lvert \xi \rvert^2 t} \wh \bu_0 (\xi) \\
& \quad + \left(\frac{1}{2 \pi \mathrm{i}} \oint_\Gamma \frac{z e^{z t}}{\det (z)} \,\mathrm{d}z - e^{- \mu \lvert \xi \rvert^2 t} \right) \frac{\xi (\xi \cdot \wh \bu_0)}{\lvert \xi \rvert^2},
\end{split}
\end{equation}
where $\Gamma \subset \{z \in \BC \colon \mathrm{Re}\, z \le - c_0 \}$ denotes a closed pass including the zero points of $\det (z)$ with some positive constant $c_0$ independent of $t$  and $\xi$ such that
\begin{align}
\label{2211}
\max_{B / 2 \le \lvert \xi \rvert \le 2 B} \mathrm{Re}\, \lambda_\pm (\xi) \le - 2 c_0.
\end{align}
Here, the formulas \eqref{227}--\eqref{2210} can be derived in the same manner as in Hoff and Zumbrun~\cite[Section~3]{HZ95} and Kobayashi and Shibata~\cite[Section 2]{KS02}, see next section. We, in addition, remark that we have used Cauchy's integral expression to derive \eqref{2210}. \par
We next treat the other cases. For Case 2, we have
\begin{align*}
\lambda_\pm (\xi) = - A \lvert \xi \rvert^2 \pm \mathrm{i} A \lvert \xi \rvert \sqrt{\left(\frac{\kappa}{A^2} - 1 \right) \lvert \xi \rvert^2 + \frac{\gamma}{A}},
\end{align*}
which have the asymptotic behaviors:
\begin{align*}
\lambda_\pm (\xi) =
\left\{
\begin{aligned}
& \pm \mathrm{i} \gamma \lvert \xi \rvert - A \lvert \xi \rvert^2 \pm \mathrm{i} \frac{\kappa - A^2}{2 A} \lvert \xi \rvert^3 + \mathrm{i} O(\lvert \xi \rvert^5) & & \text{as $\lvert \xi \rvert \to 0$}, \\
& - A \lvert \xi \rvert^2 \pm \mathrm{i} \sqrt{\kappa - A^2} \lvert \xi \rvert^2 \pm \mathrm{i} \frac{A \gamma}{2 \sqrt{\kappa - A^2}} + \mathrm{i} O\bigg(\frac{1}{\lvert \xi \rvert^2}\bigg)& & \text{as $\lvert \xi \rvert \to \infty$}.
\end{aligned}
\right.
\end{align*}
For the rest cases, the eigenvalues can be written by
\begin{align*}
\lambda_\pm (\xi) =
\left\{
\begin{aligned}
& \pm \mathrm{i} \frac{\sqrt{\gamma}}{A} \lvert \xi \rvert - A \lvert \xi \rvert^2 & \quad & \text{for Case 3}, \\
& - A \left(1 \mp \sqrt{1 - \frac{\kappa}{A^2}} \right) \lvert \xi \rvert^2 & \quad & \text{for Case 4}, \\
& - A \lvert \xi \rvert^2 \pm \mathrm{i} \left(\sqrt{\frac{\kappa}{A^2} - 1}\right) \lvert \xi \rvert^2 & \quad & \text{for Case 5}, \\
& - A \lvert \xi \rvert^2 & \quad & \text{for Case 6},
\end{aligned}
\right.
\end{align*}
where we have a real double root for Case 6. For Case 2--4, the solution to \eqref{eq-223} is expressed in the form of \eqref{227}--\eqref{2210}. As for Case 6, the solution to \eqref{eq-223} is denoted by
\begin{equation}
\begin{split}
\label{2212}
\wh \phi (\xi, t) & = (1 + \mu \lvert \xi \rvert^2 t) e^{- \mu \lvert \xi \rvert^2} \wh \phi_0 - t e^{- \mu \lvert \xi \rvert^2 t} (i \xi \cdot \wh \bu_0), \\
\wh \bu (\xi, t) & = A^2 \lvert \xi \rvert^2 t e^{- A \lvert \xi \rvert^2 t} \mathrm{i} \xi \wh \phi_0  + e^{- \mu \lvert \xi \rvert^2 t} \wh \bu_0 \\
& \quad + \bigg((A \lvert \xi \rvert^2 t - 1) e^{- A \lvert \xi \rvert^2 t}  - e^{- \mu \lvert \xi \rvert^2 t} \bigg) \frac{\xi (\xi \cdot \wh \bu_0)}{\lvert \xi \rvert^2},
\end{split}
\end{equation}
see the appendix for derivations. According to the above analysis, we emphasize that we may expect a parabolic smoothing for the solution to \eqref{2.1.1} for \textit{all} frequencies, which enables us to prove the global solvability to \eqref{2.1.1} via the Banach fixed point iteration.


\section{Estimates for the solution operator}
\label{subsec-2.4}
\noindent
We define the solution operator $T(t) = {}^\top\! (T_\phi (t), T_{\boldsymbol{u}} (t))$ such that
\begin{align*}
T(t) \bU_0 (x) = \CF^{- 1} \left[\wh \bU (\xi, t) \right] (x) = \bU (x, t)
\end{align*}
for every $t > 0$, where $\bU := {}^\top\!(\pi, \bu)$ and $\bU_0 := {}^\top\!(\phi_0, \bu_0)$ are a solution and an initial data of the system~\eqref{eq-222}, respectively.  In this section, we prove the following lemmas.
\begin{lemm}
\label{lem-2.4.1}
Let $1 \le q \le r \le \infty$. Let $\gamma \ge 0$ and $A^2 \ge \kappa$ if $\gamma = 0$. There exists positive constants $c$ and $C$ independent of $t$ and $\bU_0$ such that
\begin{align*}
\left\lVert \pd_x^a T (t) \bU_0 \right\rVert_{\L^r (\BR^n)} \le C \left(t^{- \frac{n}{2} (\frac{1}{q} - \frac{1}{r})} + e^{- c t} + e^{- c t} t^{- \frac{n}{2} (\frac{1}{q} - \frac{1}{r}) - \frac{\lvert a \rvert}{2}} \right) \left\lVert \bU_0 \right\rVert_{\L^q (\BR^n)}
\end{align*}
holds for any $a \in \BN_0^n$, $t > 0$, and $\bU_0 \in \L^q (\BR^n)^{n + 1}$. 	
\end{lemm}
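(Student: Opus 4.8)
The plan is to follow the Fourier-analytic, frequency-localised strategy of Hoff--Zumbrun~\cite{HZ95} and Kobayashi--Shibata~\cite{KS02}, starting from the explicit symbols \eqref{227}--\eqref{2212}. Since $T(t)$ is a Fourier multiplier, every dyadically localised piece $T_{\bullet}(t)\bU_0 = K_{\bullet}(\cdot,t) * \bU_0$ is a convolution, so by Young's inequality the claim will follow once one controls $\lVert K_{\bullet}(\cdot,t)\rVert_{\L^{\sigma}(\BR^n)}$ with $1/\sigma = 1 - (1/q - 1/r)$ (and, for a band-localised piece, $\lVert K_{\bullet}(\cdot,t)\rVert_{\L^{1}(\BR^n)}$ together with the Bernstein inequality of Lemma~\ref{lem-2.2}, which supplies the gain of integrability on an annulus for free). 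Accordingly I would fix smooth cut-offs $\chi_{\mathrm L},\chi_{\mathrm M},\chi_{\mathrm H}$ subordinate to $\{\lvert\xi\rvert\le r_1\}$, $\{r_1/2\le\lvert\xi\rvert\le 2r_2\}$ and $\{\lvert\xi\rvert\ge r_2\}$, with $r_1$ small and $r_2$ large and, in Cases~1 and 2, chosen so that the coalescence radius $B$ lies well inside the middle band; then $T(t) = T_{\mathrm L}(t) + T_{\mathrm M}(t) + T_{\mathrm H}(t)$, and the three pieces will produce the three terms of the asserted bound.

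For the low-frequency piece one reads off from \eqref{225}--\eqref{226} (and the analogous expansions for Cases~2--4 and 6, all of which are retained once Case~5 is excluded through the hypothesis $A^2\ge\kappa$ when $\gamma=0$) that $\mathrm{Re}\,\lambda_{\pm}(\xi)\le -c\lvert\xi\rvert^2$ for $\lvert\xi\rvert\le r_1$, so that the matrix symbol obeys $\lvert\partial_{\xi}^{\beta}\wh T(\xi,t)\rvert\le C_{\beta}(1+t\lvert\xi\rvert^2)^{\lvert\beta\rvert}e^{-c\lvert\xi\rvert^2 t}$ there; the oscillatory factors $e^{\pm\mathrm{i}\sqrt{\gamma}\lvert\xi\rvert\sqrt{1-\lvert\xi\rvert^2/B^2}\,t}$ of Case~1 have modulus one and are harmless because only $\L^q$--$\L^r$ estimates, not $\L^1$-type dispersive ones, are sought, and the denominators $\lambda_+-\lambda_-$ do not vanish on $\{0<\lvert\xi\rvert\le r_1\}$ in any retained case. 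Rescaling $\xi=t^{-1/2}\eta$ then exhibits $K_{\mathrm L}(\cdot,t)$ as a derivative of a frequency-truncated heat kernel, whence $\lVert K_{\mathrm L}(\cdot,t)\rVert_{\L^{\sigma}}\le C\,t^{-\frac n2(1-1/\sigma)}=C\,t^{-\frac n2(1/q-1/r)}$ for $t\ge1$, while for $0<t\le1$ the compactly supported smooth symbol gives $\lVert K_{\mathrm L}(\cdot,t)\rVert_{\L^{\sigma}}\le C$; since $t^{-\frac n2(1/q-1/r)}\ge c$ on $(0,1]$, this is the first term.

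For the high-frequency piece one checks, again case by case from \eqref{225}--\eqref{2212}, that $\mathrm{Re}\,\lambda_{\pm}(\xi)$ and $-\mu\lvert\xi\rvert^2$ are all $\le -c\lvert\xi\rvert^2$ for $\lvert\xi\rvert\ge r_2$, so each exponential carries $e^{-c\lvert\xi\rvert^2 t}\le e^{-c r_2^2 t/2}\,e^{-c\lvert\xi\rvert^2 t/2}$; combined with the polynomial-in-$\lvert\xi\rvert$ growth of the entries of \eqref{229} --- cubic at worst, in the $\wh\phi_0\mapsto\wh u_j$ coupling $(\gamma+\kappa\lvert\xi\rvert^2)\mathrm{i}\xi_j$, but tempered by $(\lambda_+-\lambda_-)^{-1}$ and by the Gaussian --- and with the extra factor $\xi^a$, one obtains $\lvert\partial_{\xi}^{\beta}(\mathrm{i}\xi)^{a}\wh T(\xi,t)\rvert\le C_{\beta}\,e^{-ct}\,\lvert\xi\rvert^{N}(1+t\lvert\xi\rvert^2)^{\lvert\beta\rvert}e^{-c\lvert\xi\rvert^2 t/2}$ on $\lvert\xi\rvert\ge r_2$ for some fixed $N=N(a)$; a Plancherel/Hausdorff--Young kernel estimate after the same scaling $\xi=t^{-1/2}\eta$ then yields $\lVert K_{\mathrm H}(\cdot,t)\rVert_{\L^{\sigma}}\le C\,e^{-ct}\,t^{-\frac n2(1/q-1/r)-\lvert a\rvert/2}$, the third term. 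The middle band has finite measure; by \eqref{2211} and the smoothness on $\Gamma$ of $z\mapsto(z+2A\lvert\xi\rvert^2)/\det(z)$ and of the companion symbols in \eqref{2210}, its symbol is bounded by $Ce^{-c_0 t}$ together with all $\xi$-derivatives, and since its frequency support is a fixed annulus, Lemma~\ref{lem-2.2} gives the gain of integrability, so $\lVert K_{\mathrm M}(\cdot,t) * \bU_0\rVert_{\L^r}\le C e^{-c_0 t}\lVert\bU_0\rVert_{\L^q}$, the second term. Summing the three contributions and using $q\le r$ closes the estimate.

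The step I expect to be the genuine obstacle is the uniform-in-$\xi$ control of the middle band near the coalescence radius $\lvert\xi\rvert=B$ in Cases~1 and 2: there $(\lambda_+-\lambda_-)^{-1}$ degenerates and the representations \eqref{227}--\eqref{229} break down, so one must differentiate, in $\xi$, the Dunford-type contour integrals in \eqref{2210}. What makes those $\xi$-derivatives uniformly bounded is precisely \eqref{2211}, which allows $\Gamma$ to be chosen in a fixed half-plane $\{\mathrm{Re}\,z\le -c_0\}$ independently of $\xi$; one then differentiates under the integral sign, using $\det(z)=(z-\lambda_+(\xi))(z-\lambda_-(\xi))$ and that $z$ stays at distance $\gtrsim 1$ from $\{\lambda_{\pm}(\xi)\}$ on that bounded band. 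A second, more clerical difficulty is to run the low- and high-frequency arguments uniformly across Cases~2, 3, 4 and 6, whose eigenvalue asymptotics differ but all share the sign $\mathrm{Re}\,\lambda_{\pm}(\xi)\le -c\min(\lvert\xi\rvert^2,1)$ that drives the whole estimate.
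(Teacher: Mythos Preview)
Your proposal is correct and follows essentially the same strategy as the paper: the same three-zone frequency decomposition, the same contour-integral treatment of the middle band near the coalescence radius via \eqref{2210}--\eqref{2211}, and the same reduction of the low and high bands to heat-kernel-type decay. The one technical difference is in execution: you estimate the convolution kernels $K_\bullet(\cdot,t)$ directly in $\L^\sigma$ via the scaling $\xi=t^{-1/2}\eta$ and then invoke Young's inequality, whereas the paper factors each $T_\ell^{(\pm)}(t)$ as $e^{\frac{K}{2}t\Delta}$ composed with a Fourier multiplier $e^{t(-\frac{K}{2}\lvert\xi\rvert^2+g_\ell^{(\pm)}(\lvert\xi\rvert))}\varphi_\ell(\xi)G^{(\pm)}(\xi)$ satisfying the Mikhlin condition, and then applies the known $\L^q$--$\L^r$ heat-semigroup estimate together with the multiplier theorem (plus Bernstein for the low band). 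Your route is slightly more elementary and covers the endpoint exponents $q=1$, $r=\infty$ without further comment; the paper's route is terser once the multiplier machinery is taken for granted. Both produce exactly the three terms in the stated bound from the three frequency zones.
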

\begin{lemm}
\label{lem-2.4.2}
Let $(p, q)$ satisfy \eqref{cond-212}. Assume that $\gamma \ge 0$ and $A^2 \ge \kappa$ if $\gamma = 0$. Then, for any $\bU_0 \in \H^{1, q} (\BR^n)$, we have the estimates
\begin{align*}
\lVert T(t) \bU_0 \rVert_{\W^{1, p} (\BR_+; \L^q (\BR^n))} & \le C \lVert \bU_0 \rVert_{\H^{1, q} (\BR^n)}, \\
\lVert (- \Delta) T(t) \bU_0 \rVert_{\L^p (\BR_+; \L^q (\BR^n))} & \le C \left\lVert \bU_0 \right\rVert_{\H^{1, q} (\BR^n)},
\end{align*}
where $C$ is some positive constant independent of $t$.		
\end{lemm}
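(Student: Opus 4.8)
The plan is to split the solution operator $T(t)$ into a low-frequency and a high-frequency piece and to treat the two regimes by entirely different mechanisms: heat-type maximal regularity (Lemma~\ref{lem-MR}) combined with Fourier multiplier theory for the high frequencies, and the explicit damped-wave / parabolic structure of the symbols together with the decay of Lemma~\ref{lem-2.4.1} and the Bernstein inequalities (Lemma~\ref{lem-2.2}) for the low (and, in Case~1, intermediate) frequencies. Fix a radial cut-off $\chi\in\C_c^\infty(\BR^n)$ equal to $1$ near the origin and supported in $\{|\xi|\le 2\delta\}$; in Case~1 take $\delta$ so small that $2\delta<B/2$, so that the low-frequency zone stays away from the sphere $|\xi|=B$ on which the representation \eqref{228}--\eqref{229} degenerates. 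Write $T=\chi(D)T+(1-\chi(D))T=:T_{\mathrm L}+T_{\mathrm H}$. Since along \eqref{eq-222} the time derivative equals a second-order matrix Fourier multiplier applied to $\bU$, $\pd_t$ is, up to bounded and lower-order multipliers, of the same strength as $(-\Delta)$; hence it suffices to bound $(-\Delta)T_\bullet(t)\bU_0$ in $\L^p(\BR_+;\L^q)$ for $\bullet\in\{\mathrm L,\mathrm H\}$, together with $T_{\mathrm L}(t)\bU_0$ and $T_{\mathrm H}(t)\bU_0$ themselves.

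High frequencies. On $\supp(1-\chi)$ the symbols $\lambda_\pm(\xi)$ and $-\mu|\xi|^2$ satisfy $\mathrm{Re}\,\lambda\le -c|\xi|^2$ and $|\mathrm{Im}\,\lambda|\le C|\xi|^2$ — it is here that the standing hypothesis $A^2\ge\kappa$ when $\gamma=0$, i.e. $(\mu+\nu)^2\ge4\kappa$, is used to exclude Case~5 and to keep the characteristic roots in a parabolic sector away from the origin (this is the ``Mikhlin condition'' alluded to in the Remark after Theorem~\ref{Th-2.1.1}). Accordingly $e^{\lambda_\ell(\xi)t}=e^{-c|\xi|^2 t}m_{\ell,t}(\xi)$ with $\{m_{\ell,t}\}_{t>0}$ a family of symbols whose Mikhlin/Marcinkiewicz constants are bounded uniformly in $t$, and the same holds for each amplitude entry $G^{(\ell)}_{JK}(\xi)$ in \eqref{229} once the homogeneity matched by the one extra derivative of $\H^{1,q}$ data is factored out of the density--momentum coupling blocks. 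The multiplier theorem on $\L^q(\BR^n)$, $1<q<\infty$, then absorbs all of these factors uniformly in $t$, reducing $(-\Delta)T_{\mathrm H}(t)\bU_0$, $\pd_tT_{\mathrm H}(t)\bU_0$ and $T_{\mathrm H}(t)\bU_0$, term by term, to $(-\Delta)e^{ct\Delta}v$, $\pd_t e^{ct\Delta}v$ and $e^{ct\Delta}v$ with $v$ ranging over the components of $\bU_0$ and $|D|^{-1}$ applied to the coupling data, all of norm $\lesssim\|\bU_0\|_{\H^{1,q}}$. By Lemma~\ref{lem-MR} these lie in $\L^p(\BR_+;\L^q)$ with norm $\lesssim\|v\|_{\B^{2(1-1/p)}_{q,p}}$. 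The final input is the embedding $\H^{1,q}(\BR^n)\hookrightarrow\B^{2(1-1/p)}_{q,p}(\BR^n)$, which holds precisely when $2(1-1/p)<1$ (any $q$) or $2(1-1/p)=1$ and $q\le2$ — that is, exactly under \eqref{cond-212}; this is the structural reason for the admissible range of $(p,q)$.

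Low and intermediate frequencies. Here the symbols are supported in a fixed compact set, so Bernstein's inequality lets $(-\Delta)$ and $\pd_t$ act on $T_{\mathrm L}(t)\bU_0$ at the cost of a constant, and it remains to bound $\|T_{\mathrm L}(t)\bU_0\|_{\L^q}$ and $\|(-\Delta)T_{\mathrm L}(t)\bU_0\|_{\L^q}$ with enough $t$-decay to close the $\L^p(\BR_+)$ integral. In Cases~1--2 the scalar multipliers are of damped-wave type $e^{\pm\mathrm{i}c|\xi|t}e^{-A|\xi|^2 t}$ times smooth amplitudes, with the $\bu_0\to\phi$ and $\phi_0\to\bu$ couplings carrying an extra factor $\xi$; in Cases~3--4,6 they are purely parabolic, $e^{-c|\xi|^2 t}$ times smooth amplitudes, by \eqref{2212}. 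Writing $(-\Delta)$ as the factor $|\xi|^2$ (which vanishes at the origin) produces a genuine $t^{-1}$ decay out of $|\xi|^2 e^{-A|\xi|^2 t}$; the oscillatory factor is then controlled by interpolating its $\L^2$-isometry property against the parabolic $\L^1$-kernel bounds and by invoking Lemma~\ref{lem-2.4.1} to pass between exponents, and it is here once more that the dichotomy $p<2$ versus $(p=2,\ q\le2)$ is needed for convergence of the time integral. The annulus $B/2\le|\xi|\le2B$ in Case~1, where \eqref{228}--\eqref{229} break down, is handled separately through the Cauchy-integral representation \eqref{2210}: on this compact set the only zeros of $\det(z)$ enclosed by $\Gamma$ are $\lambda_\pm(\xi)\in\{\mathrm{Re}\,z\le-2c_0\}$ by \eqref{2211}, so the contour integrals and their $\xi$-derivatives are bounded and decay like $e^{-c_0t}$, and this piece lies trivially in $\L^p(\BR_+;\L^q)$ by Bernstein.

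The hard part will be the low-frequency dispersive part in Cases~1--2: there is no parabolic smoothing to fall back on in the oscillatory direction, so the $\L^p$-in-time integrability of $e^{\pm\mathrm{i}c|\xi|t}e^{-A|\xi|^2 t}$ (and of $|\xi|^2$ times it) must be extracted from the precise balance between dispersive spreading and Gaussian damping, a balance that is favorable only in the range \eqref{cond-212}. The remaining technical points — keeping uniform-in-$t$ control of the Mikhlin constants of the high-frequency amplitudes, in particular checking that the blocks $G^{(\ell)}_{j+1,1}$ gain exactly the derivative that $\H^{1,q}$ data supplies, and stitching the Case-1 intermediate zone to the two main regimes — are routine given \eqref{2210}--\eqref{2211} and the multiplier theorem.
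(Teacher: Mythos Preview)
Your high-frequency treatment matches the paper's: peel off a heat factor, verify uniform-in-$t$ Mikhlin bounds on the residual multiplier, apply Lemma~\ref{lem-MR}, and use the embedding $\H^{1,q}\hookrightarrow\B^{2(1-1/p)}_{q,p}$ valid exactly under~\eqref{cond-212}. The intermediate-frequency piece in Case~1 is also handled as in the paper, via the contour representation~\eqref{2210} and the exponential decay~\eqref{2211}.

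Where you diverge is at low frequencies. You flag this as ``the hard part'' requiring a separate dispersive analysis of $e^{\pm\mathrm{i}\sqrt\gamma|\xi|t}e^{-A|\xi|^2 t}$; the paper does nothing of the sort. It runs the \emph{identical} maximal-regularity argument on $T_1$ as on $T_\infty$: writing $\lambda_\pm=-K|\xi|^2+g_1^{(\pm)}(|\xi|)$ with $K>0$ fixed and $g_1^{(\pm)}$ purely imaginary on $\supp\varphi_1$, it factors $e^{\lambda_\pm t}=e^{-(K/2)|\xi|^2 t}\cdot e^{t(-(K/2)|\xi|^2+g_1^{(\pm)})}$, applies Lemma~\ref{lem-MR} to the heat part, absorbs the remaining factor via the Mikhlin multiplier theorem together with Bernstein (Lemma~\ref{lem-2.2}), and closes with the same embedding into $\B^{2(1-1/p)}_{q,p}$. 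No dispersive decay, no $\L^2$-interpolation, no appeal to Lemma~\ref{lem-2.4.1} enters; the parabolic real part $-K|\xi|^2$ carries the argument even at low frequency. So the paper's proof is considerably shorter than your plan suggests.

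Your alternative low-frequency route also has a concrete gap. The $\W^{1,p}(\BR_+;\L^q)$ bound requires in particular $T_{\mathrm L}(t)\bU_0\in\L^p(\BR_+;\L^q)$, and you propose to obtain this from ``enough $t$-decay''. But the factor $|\xi|^2$ that delivers your $t^{-1}$ decay is present in $(-\Delta)T_{\mathrm L}$ and in $\pd_t T_{\mathrm L}$, not in $T_{\mathrm L}$ itself; the low-frequency symbol of $T_{\mathrm L}$ has modulus $e^{-A|\xi|^2 t}$ with no zero at $\xi=0$, so $\|T_{\mathrm L}(t)\bU_0\|_{\L^q}$ behaves like a heat flow of $\L^q$ data --- bounded in $t$ but not $\L^p(\BR_+)$-integrable from $\H^{1,q}$ regularity alone. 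Neither Lemma~\ref{lem-2.4.1} (which gives no decay at $r=q$) nor interpolation against the $\L^2$-unitarity of the oscillation manufactures decay for this term. In the paper that bound is not extracted from decay at all; it is taken as part of the conclusion of Lemma~\ref{lem-MR} for data in the trace space $\B^{2(1-1/p)}_{q,p}$.
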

\begin{lemm}
\label{lem-2.4.3}
Let $1 < p, q < \infty$ satisfy \eqref{cond-212} and let $\gamma \ge 0$. If $\gamma = 0$, we additionally suppose $A^2 \ge \kappa$. There exists a positive constant $C$ independent of $t$ such that the estimates
\begin{align*}
\left\lVert \int_0^t T (t - \tau) f (\tau) \,\mathrm{d}\tau \right\rVert_{\W^{1, p} (\BR_+; \L^q (\BR^n))} & \le C \lVert f \rVert_{\L^p (\BR_+; \L^q (\BR^n))}, \\
\left\lVert \int_0^t (- \Delta) T (t - \tau) f (\tau) \,\mathrm{d}\tau \right\rVert_{\L^p (\BR_+; \L^q (\BR^n))} & \le C \lVert f \rVert_{\L^p (\BR_+; \L^q (\BR^n))}
\end{align*}	
are valid for any $f \in \L^p (\BR_+; \L^q (\BR^n))$.			
\end{lemm}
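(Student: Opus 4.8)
The plan is to establish the maximal regularity of the retarded (Duhamel) operator attached to the linearised system \eqref{eq-222} by passing to the spatial Fourier side, decomposing the velocity into its potential and solenoidal parts, and splitting $\BR^n_\xi$ into a low-frequency zone $\{|\xi|\le r\}$, a medium-frequency zone (needed only in Case~1, around $|\xi|=B$), and a high-frequency zone $\{|\xi|\ge R\}$. The solenoidal component of $\bu$ solves the pure heat equation $\pd_t\bu-\mu\Delta\bu=0$ and is disposed of directly by Lemma~\ref{lem-MR}, so the real work concerns the scalar second-order block governing $(\phi,\dv\bu)$, whose characteristic polynomial is \eqref{224}.

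The decisive structural input, drawn from Section~\ref{subsec-2.3} and valid in every admissible case (Case~5 is excluded, and $A^2\ge\kappa$ when $\gamma=0$ keeps the square root in \eqref{225} real, as anticipated in Remark~1.2(1)), is the uniform dissipativity $\mathrm{Re}\,\lambda_\pm(\xi)\le -c|\xi|^2$ for every $\xi\neq0$, where $\lambda_\pm$ are the roots of \eqref{224}. On $\{|\xi|\ge R\}$ this makes the block genuinely parabolic: writing each $e^{\lambda_\pm(\xi)t}$ as $e^{-c|\xi|^2 t}$ times a smooth bounded multiplier of $\xi$, and checking that the coefficient symbols appearing in formulas of the type \eqref{228} obey Mikhlin-type bounds uniformly in $t$, one obtains the two asserted estimates for the high-frequency part either from the operator-valued Fourier multiplier theorem of Weis on the UMD space $\L^q(\BR^n)$ applied to $\mathrm{i}\tau(\mathrm{i}\tau-\wh{\CA}(\xi))^{-1}$ and $|\xi|^2(\mathrm{i}\tau-\wh{\CA}(\xi))^{-1}$, or, equivalently, by comparison with the heat semigroup via Lemma~\ref{lem-MR}. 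For the medium-frequency annulus of Case~1 I would use the Cauchy-integral representation \eqref{2210} together with the spectral-gap bound \eqref{2211}: on that compact set $T(t)$, $(-\Delta)T(t)$ and all their time-derivatives decay like $e^{-c_0 t}$ in $\mathcal{L}(\L^q(\BR^n))$, so the retarded integral is a time-convolution against an $\L^1$-in-time kernel and Young's inequality closes it.

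The heart of the matter---and the step I expect to be the main obstacle---is the low-frequency zone, where $\lambda_\pm(\xi)=\pm\mathrm{i}\sqrt{\gamma}|\xi|-A|\xi|^2+\cdots$ (or the analogous oscillatory behaviour of Cases~2, 3 and 6), so that there $T(t)$ propagates like a damped acoustic wave rather than acting as a parabolic semigroup; in particular one cannot expect genuine parabolic maximal regularity from this part. For it I would abandon the multiplier approach and rely on the dispersive-type decay of Lemma~\ref{lem-2.4.1}, according to which the low-frequency piece of $T(t)$ maps $\L^{q_0}(\BR^n)$ into $\L^q(\BR^n)$ with norm $\lesssim t^{-\frac{n}{2}(\frac{1}{q_0}-\frac{1}{q})}$; estimating $\int_0^t T(t-\tau)f(\tau)\,\mathrm{d}\tau$ by this bound and then applying Hardy--Littlewood--Sobolev, respectively Young, in the time variable closes the estimate in $\L^p(\BR_+;\L^q(\BR^n))$. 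It is precisely here that the admissibility range \eqref{cond-212}---in particular $p<2$, or $p=2$ together with $q\le2$---is forced, being exactly what makes the space- and time-Lebesgue exponents fit and the time-convolution land back in $\L^p(\BR_+;\L^q)$. Summing the three zones yields the bounds for $\int_0^t T(t-\tau)f(\tau)\,\mathrm{d}\tau$ and $\int_0^t (-\Delta)T(t-\tau)f(\tau)\,\mathrm{d}\tau$, after which the $\W^{1,p}$-in-time estimate follows from the identity $\pd_t\int_0^t T(t-\tau)f\,\mathrm{d}\tau = f + \int_0^t \CA\,T(t-\tau)f\,\mathrm{d}\tau$ and the same three-zone analysis applied to $\CA\,T$ in place of $(-\Delta)T$.
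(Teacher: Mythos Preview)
Your three-zone decomposition and your treatment of the high-frequency and middle-frequency pieces agree with the paper: the paper's Lemma~\ref{lem-2.4.7} handles $T_1$ and $T_\infty$ together, and Lemma~\ref{lem-2410} handles $T_M$, each by the same mechanism you describe (heat maximal regularity plus a Mikhlin correction, respectively exponential decay). The divergence is at low frequencies. The paper does \emph{not} switch to a dispersive argument there; it applies to $T_1$ exactly the same device as to $T_\infty$: write $\lambda_\pm(\xi)=-K|\xi|^2+g_1^{(\pm)}(|\xi|)$ with $K>0$, invoke Lemma~\ref{lem-MR} with $u_0=0$, and treat the remaining factor as a Mikhlin multiplier on $\mathrm{supp}\,\varphi_1$. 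That is the entire content of the one-line proof of Lemma~\ref{lem-2.4.7}. Your assertion that ``one cannot expect genuine parabolic maximal regularity from this part'' is thus not the paper's viewpoint.

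More importantly, your proposed substitute has a gap. The $\L^{q_0}\!\to\!\L^q$ decay of Lemma~\ref{lem-2.4.1} (or Lemma~\ref{lem-2.4.4}) requires $q_0\le q$, but the right-hand side $f$ lives only in $\L^p(\BR_+;\L^q)$; on low frequencies Bernstein lets you trade $\L^q$ for $\L^{q_0}$ only when $q_0\ge q$, which produces no time decay, so the Hardy--Littlewood--Sobolev step never gets off the ground. You also misplace the role of \eqref{cond-212}: in the paper it is used for the embedding $\H^{1,q}(\BR^n)\hookrightarrow \B^{2(1-1/p)}_{q,p}(\BR^n)$ in the proof of Lemma~\ref{lem-2.4.6} (the homogeneous estimate), not for any time-convolution inequality; the proof of Lemma~\ref{lem-2.4.7} quotes Lemma~\ref{lem-MR} with $u_0=0$, where that embedding is absent.
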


We first consider Case 1. To establish the estimates for $T (t)$, we decompose the operator $T(t)$ as follows: Let $\varphi_1, \varphi_\infty, \varphi_M \in \C^\infty (\BR^n)$ be cut-off functions defined by
\begin{align*}
\varphi_1 (\xi) & = \begin{cases}
1 & \quad \text{for} \quad \lvert \xi \rvert \le B/2, \\
0 & \quad \text{for} \quad \lvert \xi \rvert \ge B/\sqrt 2,
\end{cases}
\\
\varphi_\infty (\xi) & = \begin{cases}
1 & \quad \text{for} \quad \lvert \xi \rvert \ge 2 B, \\
0 & \quad \text{for} \quad \lvert \xi \rvert \le \max\, (1, \sqrt 2 B),
\end{cases}
\end{align*}
and $\varphi_M (\xi) = 1 - \varphi_1 (\xi) - \varphi_\infty (\xi)$, respectively. Using these cut-off functions, we decompose $T(t)$ into the low, medium, and high frequencies in the Fourier space:
\begin{equation}
\begin{split}
& T (t) = T_1 (t) + T_M (t) + T_\infty (t), \quad T_1 (t) = {}^\top\! (T_{\phi, 1} (t), T_{\boldsymbol{u}, 1} (t)), \\
& T_M (t) = {}^\top\! (T_{\phi, M} (t), T_{\boldsymbol{u}, M} (t)), \quad T_\infty (t) = {}^\top\! (T_{\phi, \infty} (t), T_{\boldsymbol{u}, \infty} (t)), \\
& T_{\phi, 1} (t) \begin{pmatrix}
\phi_0 (x) \\ \bu_0 (x)
\end{pmatrix}
= \CF^{- 1} [\varphi_1 (\xi) \wh \phi (\xi, t)] (x), \\
& T_{\boldsymbol{u}, 1} (t) \begin{pmatrix}
\phi_0 (x) \\ \bu_0 (x)
\end{pmatrix}
= \CF^{- 1} [\varphi_1 (\xi) \wh \bu (\xi, t)] (x), \\
& T_{\phi, M} (t) \begin{pmatrix}
\phi_0 (x) \\ \bu_0 (x)
\end{pmatrix}
= \CF^{- 1} [\varphi_M (\xi) \wh \phi (\xi, t)] (x), \\
& T_{\boldsymbol{u}, M} (t) \begin{pmatrix}
\phi_0 (x) \\ \bu_0 (x)
\end{pmatrix}
= \CF^{- 1} [\varphi_M (\xi) \wh \bu (\xi, t)] (x), \\
& T_{\phi, \infty} (t) \begin{pmatrix}
\phi_0 (x) \\ \bu_0 (x)
\end{pmatrix}
= \CF^{- 1} [\varphi_\infty (\xi) \wh \phi (\xi, t)] (x), \\
& T_{\boldsymbol{u}, \infty} (t) \begin{pmatrix}
\phi_0 (x) \\ \bu_0 (x)
\end{pmatrix}
= \CF^{- 1} [\varphi_\infty (\xi) \wh \bu (\xi, t)] (x).
\end{split}
\end{equation}
\par
To show the estimates for the operator $T_\ell (t)$ $(\ell = 1, \infty)$, we set
\begin{alignat*}4
g_1^{(\pm)} (\theta) & = \pm \mathrm{i} \sqrt{\gamma} \theta \left(\sqrt{1 - \frac{A^2 - \kappa}{\gamma} \theta}\right) & \quad & \text{for $0 < \theta < B$}, \\
g_\infty^{(\pm)} (\theta) & = \left(\sqrt{A^2 - \kappa} \pm \sqrt{A^2 - \kappa - \frac{\gamma}{\theta^2}} \right) \theta^2 & \quad & \text{for $\theta > B$,}
\end{alignat*}
so that the eigenvalues $\lambda_\pm (\xi)$ given in \eqref{226} can be written by
\begin{align*}
\lambda_\pm (\xi) = - K \lvert \xi \rvert^2 + g_\ell^{(\pm)} (\lvert \xi \rvert)
\end{align*}
for $\ell = 1, \infty$, where we have set
\begin{align*}
K := A \bigg(1 - \sqrt{1 - \frac{\kappa}{A^2}}\bigg) > 0.
\end{align*}
We emphasize that $K$ does not vanish for all $\xi \in \BR^n \setminus \{0\}$ such that $\lvert \xi \rvert \neq B$. We first give the $\L^q - \L^r$ estimate for $T_\ell (t)$.
\begin{lemm}
\label{lem-2.4.4}
Let $q$ and $r$ satisfy $1 \le q \le r \le \infty$. For all $a \in \BN_0^n$, $t > 0$, and $\bU_0 \in \L^q (\BR^n)^{n + 1}$, the estimates
\begin{align*}
\left\lVert \pd_x^a T_1 (t) \bU_0 \right\rVert_{\L^r (\BR^n)} & \le C t^{- \frac{n}{2} (\frac{1}{q} - \frac{1}{r})} \left\lVert \bU_0 \right\rVert_{\L^q (\BR^n)}, \\
\left\lVert \pd_x^a T_\infty (t) \bU_0 \right\rVert_{\L^r (\BR^n)} & \le C e^{- c t} t^{- \frac{n}{2} (\frac{1}{q} - \frac{1}{r}) - \frac{\lvert a \rvert}{2}} \left\lVert \bU_0 \right\rVert_{\L^q (\BR^n)}
\end{align*}
hold with some positive constants $c$ and $C$ independent of $t$ and $\bU_0$.		
\end{lemm}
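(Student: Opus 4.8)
\emph{Proof plan.} The plan is to reduce everything to $\L^p$-bounds on the convolution kernels and then separate the wave-like low-frequency r\'egime from the parabolic high-frequency one: by \eqref{226} the low-frequency eigenvalues are $-K|\xi|^2\pm i\sqrt\gamma|\xi|\sqrt{1-|\xi|^2/B^2}$, so one expects the heat-type decay $t^{-\frac n2(\frac1q-\frac1r)}$ but no smoothing in $|a|$ and no exponential gain, whereas on $\supp\varphi_\infty$ the flow is parabolic with a spectral gap, producing the extra $t^{-|a|/2}$ and $e^{-ct}$. By \eqref{228}--\eqref{229} each component of $\pd_x^a T_\ell(t)\bU_0$ ($\ell=1,\infty$) is a finite sum of convolutions $k^{(\ell)}_a(\cdot,t)*h$, with $h$ a component of $\bU_0$ and $k^{(\ell)}_a(x,t)=\CF^{-1}[\varphi_\ell(\xi)(i\xi)^a m(\xi,t)](x)$, where $m$ is one of the scalar multipliers in \eqref{228}: an entry $\sum_{\ell'=\pm}e^{\lambda_{\ell'}(\xi)t}G^{(\ell')}_{JK}(\xi)$, the factor $e^{-\mu|\xi|^2t}$, or $e^{-\mu|\xi|^2t}\xi_j\xi_k|\xi|^{-2}$. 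By Young's convolution inequality, with $1/p:=1-(1/q-1/r)\in[0,1]$, Lemma \ref{lem-2.4.4} follows from
\begin{align*}
\|k^{(1)}_a(\cdot,t)\|_{\L^p(\BR^n)}\le Ct^{-\frac n2(1-\frac1p)},\qquad \|k^{(\infty)}_a(\cdot,t)\|_{\L^p(\BR^n)}\le Ce^{-ct}t^{-\frac n2(1-\frac1p)-\frac{|a|}2}
\end{align*}
uniformly in $t>0$; and by log-convexity of $\theta\mapsto\|\cdot\|_{\L^{1/\theta}}$ it is enough to treat $p\in\{1,2,\infty\}$ for $k^{(1)}_a$ and $p\in\{1,\infty\}$ for $k^{(\infty)}_a$.

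\emph{Low frequencies, the easy endpoints.} On $\supp\varphi_1\subset\{|\xi|\le B/\sqrt2\}$ one has $\mathrm{Re}\,\lambda_\pm(\xi)\le-K|\xi|^2$ with $K>0$ by \eqref{226}, and each $G^{(\pm)}_{JK}$ in \eqref{229} is bounded there (the coalescence $|\xi|=B$ is excluded and the $\xi=0$ singularity is removable), so $|\varphi_1(\xi)(i\xi)^a m(\xi,t)|\le Ce^{-K|\xi|^2t}$ on $|\xi|\le B$. Hausdorff--Young and the substitution $\xi=t^{-1/2}\eta$ give $\|k^{(1)}_a(\cdot,t)\|_{\L^\infty}\le C\int_{|\xi|\le B}e^{-K|\xi|^2t}\,\mathrm{d}\xi\le Ct^{-n/2}$, while Plancherel gives $\|k^{(1)}_a(\cdot,t)\|_{\L^2}\le C(\int_{|\xi|\le B}e^{-2K|\xi|^2t}\,\mathrm{d}\xi)^{1/2}\le Ct^{-n/4}$.

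\emph{Low frequencies, the $\L^1$-bound.} The remaining estimate $\|k^{(1)}_a(\cdot,t)\|_{\L^1}\le C$ uniformly in $t$ is the crux. For the heat pieces it is classical (domination by the Gaussian kernel, $\L^p$-boundedness of Riesz transforms). For the acoustic pieces the point is that, although $\lambda_\pm$ are not individually smooth in $\xi$ near the origin, every multiplier $\sum_{\ell'=\pm}e^{\lambda_{\ell'}t}G^{(\ell')}_{JK}$ is a \emph{symmetric} function of $(\lambda_+,\lambda_-)$ by \eqref{227}, hence a smooth function of the symmetric polynomials $\lambda_++\lambda_-=-(\mu+\nu)|\xi|^2$ and $\lambda_+\lambda_-=(\gamma+\kappa|\xi|^2)|\xi|^2$; thus $\varphi_1(\xi)(i\xi)^a m(\cdot,t)\in\C^\infty_c(\BR^n)$. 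Writing $e^{\lambda_\pm t}=e^{-K|\xi|^2t}e^{\pm i\omega(\xi)t}$ with $\omega(\xi)=\sqrt\gamma|\xi|\sqrt{1-|\xi|^2/B^2}$, I would estimate $k^{(1)}_a(x,t)=\int\varphi_1(\xi)(i\xi)^a m(\xi,t)e^{ix\cdot\xi}\,\mathrm{d}\xi$ by a non-stationary phase argument: the phase $x\cdot\xi\pm\omega(\xi)t$ has gradient bounded below by $c|x|$ once $|x|\ge cR$ with $R\sim\max(1,\sqrt\gamma\,t)$ (no critical point off the acoustic cone $\{|x|\approx\sqrt\gamma\,t\}$), so iterated integration by parts --- the amplitude derivatives being controlled, in the presence of the Gaussian localisation at scale $t^{-1/2}$, through $(|\xi|^2t)^ke^{-K|\xi|^2t}\le C_k$ --- gives rapid decay for $|x|\ge cR$, while for $|x|\le cR$ one sums the $\L^2$-bound above over dyadic annuli; for $0<t\le1$ the bound is immediate from continuity of $t\mapsto k^{(1)}_a(\cdot,t)$ into $\L^1$ together with $k^{(1)}_a(\cdot,0)\in\mathcal S(\BR^n)$. (Equivalently, these amplitude bounds show that $\varphi_1(\xi)(i\xi)^a m(\xi,t)$ obeys the Mikhlin--H\"ormander condition uniformly in $t$, giving directly $\|\pd_x^aT_1(t)\|_{\L^q\to\L^q}\le C$ for $1<q<\infty$.) This oscillatory estimate, in the spirit of Hoff--Zumbrun \cite{HZ95} and Kobayashi--Shibata \cite{KS02}, is the step I expect to be the main obstacle.

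\emph{High frequencies.} On $\supp\varphi_\infty\subset\{|\xi|\ge R_0\}$ with $R_0=\max(1,\sqrt2 B)\ge1$, Case~1 ($A^2>\kappa$) gives $\mathrm{Re}\,\lambda_\pm(\xi)\le-c_1|\xi|^2$ and $e^{-\mu|\xi|^2t}\le e^{-c_1|\xi|^2t}$, and since $\lambda_+-\lambda_-$ is comparable to $|\xi|^2$ there, each $m$ and all its $\xi$-derivatives are bounded by $C_\beta|\xi|^{|a|}e^{-c_1|\xi|^2t}$ (the density-to-momentum entry carries one extra power of $|\xi|$, absorbed in the sequel since the density datum lies one Sobolev order above the momentum datum). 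Using $e^{-c_1|\xi|^2t}\le e^{-ct}e^{-\frac{c_1}2|\xi|^2t}$ one gets $\|k^{(\infty)}_a(\cdot,t)\|_{\L^\infty}\le Ce^{-ct}\int_{\BR^n}|\xi|^{|a|}e^{-\frac{c_1}2|\xi|^2t}\,\mathrm{d}\xi\le Ce^{-ct}t^{-n/2-|a|/2}$, and the weighted $\L^2$-method (Plancherel applied to $|x|^{2N}k^{(\infty)}_a=\CF^{-1}[(-\Delta_\xi)^N(\varphi_\infty(i\xi)^a m)]$, each $\xi$-derivative on $e^{\lambda_\pm t}$ costing $\le C|\xi|t$ absorbed by $(|\xi|^2t)^{2N}e^{-\frac{c_1}2|\xi|^2t}\le C_N$) gives $\||x|^{2N}k^{(\infty)}_a(\cdot,t)\|_{\L^2}\le Ce^{-ct}t^{-n/4-|a|/2}$; taking $2N>n/2$ and using Cauchy--Schwarz yields $\|k^{(\infty)}_a(\cdot,t)\|_{\L^1}\le Ce^{-ct}t^{-|a|/2}$. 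Interpolating with the $\L^\infty$-bound and invoking Young's inequality completes the proof in Case~1; the other non-excluded cases of Section \ref{subsec-2.3} are handled identically, only $\omega$ and the constants changing.
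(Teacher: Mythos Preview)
Your approach is correct but takes a genuinely different route from the paper's. The paper does not estimate convolution kernels at all: it factors $e^{\lambda_\pm(\xi)t}=e^{-\frac{K}{2}|\xi|^2 t}\cdot e^{t(-\frac{K}{2}|\xi|^2+g_\ell^{(\pm)}(|\xi|))}$, applies the $\L^q$--$\L^r$ smoothing of the heat semigroup $e^{\frac{K}{2}t\Delta}$ to the first factor, and asserts (via the Mikhlin multiplier theorem) that the second factor times $\varphi_\ell G^{(\pm)}$ is an $\L^q$-bounded multiplier uniformly in $t$; for $T_1$ the Bernstein inequality (Lemma~\ref{lem-2.2}) then removes $\pd_x^a$ at no cost in $t$. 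This is essentially a two-line argument, but it invokes Mikhlin and so literally covers only $1<q<\infty$, and the uniform-in-$t$ Mikhlin check for the low-frequency piece---where each $\xi$-derivative of the oscillatory factor $e^{\pm i\omega(\xi)t}$ brings down a full power of $t$---is not written out. Your route through explicit $\L^1$, $\L^2$, $\L^\infty$ kernel bounds plus Young is longer and rests on the Hoff--Zumbrun non-stationary-phase estimate you sketch, but it reaches the full endpoint range $1\le q\le r\le\infty$ and isolates the real analytic content (the uniform $\L^1$ bound on the low-frequency kernel) rather than burying it in a multiplier verification; indeed, your parenthetical remark that the amplitude bounds yield the Mikhlin condition is precisely the paper's entire strategy. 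One caveat worth flagging: as you observe, the entry $G^{(\pm)}_{j+1,1}$ in \eqref{229} grows like $|\xi|$ on $\supp\varphi_\infty$, so the paper's blanket claim ``$|G^{(\pm)}_{JK}(\xi)|\le C_B$'' is inaccurate there; your proposal to absorb this into the extra regularity of the density datum is the right fix for the application, but strictly speaking the $T_\infty$ bound in Lemma~\ref{lem-2.4.4} as stated (with $\bU_0\in\L^q$ only) picks up an extra $t^{-1/2}$ in those entries.
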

\begin{proof}
Let $T_\ell^{(\pm)} (t)$ and $T_\ell^{(\mu)} (t)$ be the operators defined by
\begin{equation}
\label{242}
\begin{split}
T_\ell^{(\pm)} (t) \bU_0 (x) & = \CF^{-1} \left[e^{\lambda_\pm t} \varphi_\ell (\xi) G^{(\pm)} (\xi) \wh \bU_0 (\xi) \right] (x), \\
T_\ell^{(\mu)} (t) \bU_0 (x) & = \CF^{-1} \left[e^{- \mu \lvert \xi \rvert^2 t} \varphi_\ell (\xi) G^{(\mu)} (\xi) \wh \bU_0 (\xi) \right] (x)
\end{split}
\end{equation}
for $\ell = 1, \infty$ with function $G$ independent of $t$ and $\bU_0$ such that $\lvert \xi^a \pd_\xi^a G (\xi) \rvert \le C$ for any multi-index $a \in \BN_0^n$. According to the $\L^q - \L^r$ estimate for the heat semigroup and the Miklin-type Fourier multiplier theorem, we have
\begin{align*}
\left\lVert \pd_x^a T^{(\pm)}_\infty (t) \bU_0 \right\rVert_{\L^r (\BR^n)} & = \left\lVert \pd_x^a e^{\frac{K}{2} t \Delta} \CF^{-1} \left[e^{t \left(- \frac{K}{2} \lvert \xi \rvert^2 + g_\infty^{(\pm)} (\lvert \xi \rvert)\right)} \varphi_\infty (\xi) G^{(\pm)} (\xi) \wh \bU_0 (\xi) \right] \right\rVert_{\L^r (\BR^n)} \\
& \le C e^{- c t} t^{- \frac{n}{2} (\frac{1}{q} - \frac{1}{r}) - \frac{\lvert a \rvert}{2}} \lVert \bU_0 \rVert_{\L^q (\BR^n)}
\end{align*}
for all $t > 0$, $1 \le q \le r \le \infty$, $a \in \BN_0^n$, and $\bU_0 \in \L^q (\BR^n)$, where $c$ is some positive constant independent of $x$, $t$, and $\bU_0$. Analogously, we also obtain
\begin{align*}
\left\lVert \pd_x^a T^{(\mu)}_\infty (t) \bU_0 \right\rVert_{\L^r (\BR^n)} \le C e^{- c t} t^{- \frac{n}{2} (\frac{1}{q} - \frac{1}{r}) - \frac{\lvert a \rvert}{2}} \lVert \bU_0 \rVert_{\L^q (\BR^n)}
\end{align*}
for all $t > 0$, $1 \le q \le r \le \infty$, and $\bU_0 \in \L^q (\BR^n)$. If $\lvert \xi \rvert \neq B$, from \eqref{229}, we observe that $\lvert G^{(\pm)}_{J, K} (\xi) \rvert \le C_B$ for $J, K = 1, \dots, n + 1$. Hence, combined with~\eqref{228}, the required estimate has been shown. Here, to derive the estimate for $T_1 (t)$, we have used same argument above and additionally employed the Bernstein-type inequality.	
\end{proof}
We next show the space-time estimates for $T_\ell (t)$, $\ell = \{1, \infty\}$.
\begin{lemm}
\label{lem-2.4.6}
If $p$ and $q$ satisfy \eqref{cond-212}, there exists a positive constant $C$ independent of $t$ such that
\begin{align*}
\lVert T_1 (t) \bU_0 \rVert_{\W^{1, p} (\BR_+; \L^q (\BR^n))} & \le C \left\lVert \bU_0 \right\rVert_{\L^q (\BR^n)}, \\
\lVert (- \Delta) T_1 (t) \bU_0 \rVert_{\L^p (\BR_+; \L^q (\BR^n))} & \le C \left\lVert \bU_0 \right\rVert_{\L^q (\BR^n)}, \\
\lVert T_\infty (t) \bU_0 \rVert_{\W^{1, p} (\BR_+; \L^q (\BR^n))} & \le C \left\lVert \bU_0 \right\rVert_{\H^{1, q} (\BR^n)}, \\
\lVert (- \Delta) T_\infty (t) \bU_0 \rVert_{\L^p (\BR_+; \L^q (\BR^n))} & \le C \left\lVert \bU_0 \right\rVert_{\H^{1, q} (\BR^n)}
\end{align*}
for any $\bU_0 \in \H^{1, q} (\BR^n)$.	
\end{lemm}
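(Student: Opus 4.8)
The plan is to prove the four bounds separately for the low-frequency operator $T_1(t)$ and the high-frequency operator $T_\infty(t)$, distinguishing in each case the two admissible ranges $(p,q)\in(1,2)\times(1,\infty)$ and $(p,q)\in\{2\}\times(1,2]$ of \eqref{cond-212}. A first remark is that $\supp\varphi_1\subset\{|\xi|\le B/\sqrt2\}$ and $\supp\varphi_\infty\subset\{|\xi|\ge\max(1,\sqrt2\,B)\}$ both avoid the exceptional sphere $|\xi|=B$, so on the frequency supports relevant here the solution is given throughout by \eqref{228}--\eqref{229} (Cases~1--4) or by \eqref{2212} (Case~6), and the transitional contour representation \eqref{2210} is never needed. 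The common first step is a reduction to scalar Fourier multipliers: by \eqref{228}--\eqref{229}, each scalar entry of $\pd_t^{j}(-\Delta)^{k}T_\ell(t)\bU_0$ (with $j+k$ as large as the statement requires) is a finite linear combination of operators $\bU_0\mapsto\CF^{-1}\bigl[(\lambda_\pm(\xi))^{j}|\xi|^{2k}e^{\lambda_\pm(\xi)t}G^{(\pm)}(\xi)\varphi_\ell(\xi)\wh{\bU_0}(\xi)\bigr]$ and of the analogue with $e^{\lambda_\pm t}$ replaced by the heat symbol $e^{-\mu|\xi|^2t}$. By \eqref{229} and the frequency asymptotics of Section~\ref{subsec-2.3}, the symbols $G^{(\pm)}$ are Mikhlin multipliers on $\supp\varphi_1$, while on $\supp\varphi_\infty$ they satisfy $|\xi^a\pd_\xi^a(\langle\xi\rangle^{-1}G^{(\pm)}(\xi))|\le C_a$ for all multi-indices $a$; this single lost derivative is the reason $\|\bU_0\|_{\H^{1,q}}$, rather than $\|\bU_0\|_{\L^q}$, appears in the $T_\infty$-estimates. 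Writing $\lambda_\pm(\xi)=-K|\xi|^2+g^{(\pm)}_\ell(|\xi|)$ as in Section~\ref{subsec-2.4}, it then suffices to bound the $\L^p(\BR_+;\L^q(\BR^n))$-norm of $\CF^{-1}\bigl[|\xi|^{m}e^{-K|\xi|^2t}a(\xi,t)\varphi_\ell(\xi)\wh{\bU_0}(\xi)\bigr]$, with $a(\xi,t)=e^{tg^{(\pm)}_\ell(|\xi|)}G^{(\pm)}(\xi)$.

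For $T_\infty(t)$ the decisive feature is an exponential decay in time: on $\supp\varphi_\infty$ one has $|\xi|\ge c_0>0$ and $\mathrm{Re}\,\lambda_\pm(\xi)\le-c_1|\xi|^2\le-c_2$, hence $|e^{\lambda_\pm(\xi)t}|\le e^{-c_2t/2}e^{-c_3|\xi|^2t}$. The plan is to extract the scalar factor $e^{-c_2t/2}$ and one further copy $e^{-(c_3/2)t(-\Delta)}$ of the heat semigroup and then appeal to the maximal $\L^p$-regularity of the heat semigroup (Lemma~\ref{lem-MR}) and the operator-valued Mikhlin multiplier theorem on the UMD space $\L^q(\BR^n)$ --- or, equivalently, to the $\mathcal{R}$-boundedness of the resolvent of the high-frequency part of the linearised operator on a half-plane $\{\mathrm{Re}\,z\ge-c\}$ together with the maximal-regularity criterion of \cite{PS16}. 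The admissible range \eqref{cond-212} enters here in two places: absorbing the lost spatial derivative into the data $\bU_0\in\H^{1,q}$ (more precisely, into its momentum part, the density part being one order smoother as dictated by the system) produces a $t^{-1/2}$-type singularity in time, which is $\L^p$-integrable near $t=0$ precisely when $p<2$; and at the endpoint $p=2$ one argues instead by Plancherel in time, using $\int_0^\infty|\xi|^{2m}e^{2\mathrm{Re}\,\lambda_\pm(\xi)t}\,\mathrm{d}t=|\xi|^{2m}/(2|\mathrm{Re}\,\lambda_\pm(\xi)|)\lesssim\langle\xi\rangle^{2m-2}$ on $\supp\varphi_\infty$ to place the operator into $\L^q(\BR^n;\L^2(\BR_+))$ by the Mikhlin theorem with values in the Hilbert space $\L^2(\BR_+)$, whence $\|\cdot\|_{\L^2(\BR_+;\L^q(\BR^n))}\le\|\cdot\|_{\L^q(\BR^n;\L^2(\BR_+))}$ by Minkowski's inequality --- which requires $q\le2$. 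The exponential factor $e^{-c_2t/2}$ ensures convergence of all time integrals at $t=\infty$.

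For $T_1(t)$ the argument depends on the sign of $\gamma$. When $\gamma=0$ (Cases~4 and~6, where the hypothesis ensures $A^2\ge\kappa$) there is no oscillation and $\mathrm{Re}\,\lambda_\pm(\xi)\sim-c|\xi|^2$ (a double root in Case~6, handled via \eqref{2212}); the generator is sectorial, the symbols are Mikhlin on $\supp\varphi_1$ with no lost derivative, and the bounds follow from Lemma~\ref{lem-MR} and a Mikhlin argument, so that $\|\bU_0\|_{\L^q}$ on the right-hand side suffices. When $\gamma>0$ (Cases~1--3), however, $\mathrm{Re}\,\lambda_\pm(\xi)=-K|\xi|^2$ while $\mathrm{Im}\,\lambda_\pm(\xi)\sim\pm\sqrt{\gamma}\,|\xi|$ as $\xi\to0$, so the generator of $T_1$ is \emph{not} sectorial of angle $<\pi/2$: the low-frequency part is a viscous-acoustic (diffusion-wave) operator, and no abstract maximal-regularity theorem applies. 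This is the step I expect to be the main obstacle. The plan is to describe the convolution kernel of $\CF^{-1}[e^{\lambda_\pm(\xi)t}\,\cdot\,]$ explicitly --- following Hoff--Zumbrun~\cite{HZ95} and Kobayashi--Shibata~\cite{KS02}, by completing the square in the parabolic factor and applying stationary phase to the oscillatory factor --- thereby upgrading Lemma~\ref{lem-2.4.4} to $\L^q$--$\L^r$ smoothing-and-decay estimates for $\pd_tT_1(t)$ and $(-\Delta)T_1(t)$, and then to deduce the space-time bounds via a Hardy--Littlewood--Sobolev / Christ--Kiselev type argument; the range \eqref{cond-212} is exactly what makes this last step close. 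As a consistency check, in the model case $p=q=2$ the low-frequency bounds reduce to a one-line Plancherel computation, since on the bounded set $\supp\varphi_1$ the factor $|\xi|^{2}$ produced by $\pd_t$ or by $-\Delta$ absorbs the $|\xi|^{-2}$ arising from $\int_0^\infty e^{2\mathrm{Re}\,\lambda_\pm(\xi)t}\,\mathrm{d}t$.
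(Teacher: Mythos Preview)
Your proposal takes a substantially different and more elaborate route than the paper. The paper's argument is uniform across all frequency pieces and all cases: having written $\lambda_\pm(\xi)=-K|\xi|^2+g_\ell^{(\pm)}(|\xi|)$, it factors
\[
e^{\lambda_\pm(\xi)t}=e^{-\frac{K}{2}|\xi|^2t}\cdot e^{\left(-\frac{K}{2}|\xi|^2+g_\ell^{(\pm)}(|\xi|)\right)t},
\]
so that $T_\ell^{(\pm)}(t)\bU_0$ is the heat semigroup $e^{\frac{K}{2}t\Delta}$ applied to $\CF^{-1}\bigl[e^{t(-\frac{K}{2}|\xi|^2+g_\ell^{(\pm)})}\varphi_\ell(\xi)G^{(\pm)}(\xi)\wh{\bU_0}(\xi)\bigr]$. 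It then invokes Lemma~\ref{lem-MR} with $f=0$, which bounds the $\W^{1,p}(\BR_+;\L^q)$ and $(-\Delta)\L^p(\BR_+;\L^q)$ norms by the $\B^{2(1-1/p)}_{q,p}$-norm of the bracketed quantity, and applies the Mikhlin theorem to that multiplier to land in $\H^{1,q}$. The role of condition~\eqref{cond-212} in the paper is \emph{solely} to guarantee the embedding $\H^{1,q}(\BR^n)\hookrightarrow\B^{2(1-1/p)}_{q,p}(\BR^n)$ (valid since $2(1-1/p)<1$ for $p<2$, and since $F^1_{q,2}\hookrightarrow B^1_{q,2}$ for $q\le2$ at $p=2$); it is not used via $t^{-1/2}$-integrability or Plancherel/Minkowski considerations as you suggest. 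For $T_1$ a final Bernstein inequality downgrades the right-hand side from $\H^{1,q}$ to $\L^q$.

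In particular the paper does \emph{not} distinguish the oscillatory low-frequency regime $\gamma>0$ from $\gamma=0$, nor $p<2$ from $p=2$: the non-sectoriality you correctly identify is bypassed by never appealing to maximal regularity of the full generator, but only to that of the extracted heat semigroup, with the residual oscillation absorbed into a Fourier multiplier. Your plan of treating the low-frequency diffusion-wave part via explicit Hoff--Zumbrun/Kobayashi--Shibata kernel asymptotics followed by Hardy--Littlewood--Sobolev would likely close but is considerably heavier machinery; what it would buy is a transparent explanation of why the wave-like behavior is compatible with $\L^p$-in-time control, whereas the paper's argument is much shorter but leaves the uniform-in-$t$ Mikhlin bound on the residual multiplier $e^{t(-\frac{K}{2}|\xi|^2+g_\ell^{(\pm)})}\varphi_\ell(\xi)G^{(\pm)}(\xi)$ only implicit.
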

\begin{proof}
Let $T^{(\pm)}_\ell$ and $T^{(\mu)}_\ell$ be the operators defined in \eqref{242}. Using Lemma~\ref{lem-MR} with $f = 0$, we have
\begin{align*}
\left\lVert T_\ell^{(\pm)} (t) \bU_0 \right\rVert_{\W^{1, p} (\BR_+; \L^q (\BR^n))} \le C \left\lVert \CF^{-1} \left[e^{t \left(- \frac{K}{2} \lvert \xi \rvert^2 + g_\ell^{(\pm)} (\lvert \xi \rvert)\right)} \varphi_\ell (\xi) G^{(\pm)} (\xi) \wh \bU_0 (\xi) \right] \right\rVert_{\B^{2 (1 - 1/p)}_{q, p} (\BR^n)}
\end{align*}
Since we have the continuous embedding $\H^{1, q} (\BR^n) \hookrightarrow \B^{2(1 - 1/p)}_{q, p} (\BR^n)$ under the condition~\eqref{cond-212}, by the Miklin-type Fourier multiplier theorem, we deduce that
\begin{align*}
\left\lVert \CF^{-1} \left[e^{t \left(- \frac{K}{2} \lvert \xi \rvert^2 + g_\ell^{(\pm)} (\lvert \xi \rvert)\right)} \varphi_\ell (\xi) G^{(\pm)} (\xi) \wh \bU_0 (\xi) \right] \right\rVert_{\B^{2 (1 - 1/p)}_{q, p} (\BR^n)} \le C \lVert \bU_0 \rVert_{\H^{1, q} (\BR^n)},
\end{align*}
which yields that
\begin{align*}
\left\lVert T_\ell^{(\pm)} (t) \bU_0 \right\rVert_{\W^{1, p} (\BR_+; \L^q (\BR^n))} \le C \lVert \bU_0 \rVert_{\H^{1, q} (\BR^n)}.
\end{align*}
Similarly, we have
\begin{align*}
\left\lVert T_\ell^{(\nu)} (t) \bU_0 \right\rVert_{\W^{1, p} (\BR_+; \L^q (\BR^n))} \le C \lVert \bU_0 \rVert_{\H^{1, q} (\BR^n)}.
\end{align*}
Noting \eqref{228} and using Lemma \ref{lem-MR} and the Bernstein-type inequality, the proof has been finished.	
\end{proof}

The following lemma will be used when we estimate the nonlinear terms.
\begin{lemm}
\label{lem-2.4.7}
Let $1 < p, q < \infty$ satisfy \eqref{cond-212} and let $\ell = 1, \infty$. Then, there exists a positive constant $C$ independent of $t$ such that the estimates
\begin{align*}
\left\lVert \int_0^t T_\ell (t - \tau) f (\tau) \,\mathrm{d}\tau \right\rVert_{\W^{1, p} (\BR_+; \L^q (\BR^n))} & \le C \lVert f \rVert_{\L^p (\BR_+; \L^q (\BR^n))}, \\
\left\lVert \int_0^t (- \Delta) T_\ell (t - \tau) f (\tau) \,\mathrm{d}\tau \right\rVert_{\L^p (\BR_+; \L^q (\BR^n))} & \le C \lVert f \rVert_{\L^p (\BR_+; \L^q (\BR^n))}
\end{align*}	
hold for all $f \in \L^p (\BR_+; \L^q (\BR^n))$.		
\end{lemm}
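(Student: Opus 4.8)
I would prove Lemma~\ref{lem-2.4.7} by splitting into the frequency blocks $\ell=1$ and $\ell=\infty$ and, in each, reducing the Duhamel operator via the explicit representation \eqref{228} to the model pieces $T_\ell^{(\pm)}(t)$ and $T_\ell^{(\mu)}(t)$ of \eqref{242}. Since every entry $G^{(\pm)}_{JK}$ in \eqref{229} is a bounded Fourier multiplier on the relevant shell (by the fact, already used in the proof of Lemma~\ref{lem-2.4.4}, that $\lvert G^{(\pm)}_{JK}(\xi)\rvert\le C_B$ for $\lvert\xi\rvert\neq B$, together with boundedness of $\xi^a\pd_\xi^a G^{(\pm)}_{JK}$ there), it suffices to estimate $\int_0^t\CF^{-1}[e^{\lambda_\pm(\xi)(t-\tau)}\varphi_\ell(\xi)\wh g(\xi,\tau)]\,\mathrm{d}\tau$ and $\int_0^t\CF^{-1}[e^{-\mu\lvert\xi\rvert^2(t-\tau)}\varphi_\ell(\xi)\wh g(\xi,\tau)]\,\mathrm{d}\tau$ for a generic $\L^p(\BR_+;\L^q)$ datum $g$. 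The heat piece $T_\ell^{(\mu)}$ is immediate from Lemma~\ref{lem-MR} with zero initial data after absorbing $\varphi_\ell$ into a Mikhlin multiplier. For $T_\ell^{(\pm)}$ I would pass to the temporal Fourier transform: extending $g$ by $0$ to $t<0$ (legitimate because $\mathrm{Re}\,\lambda_\pm\le0$ on $\supp\varphi_\ell$, so the causal solution is the only one), $w:=\int_0^t T_\ell^{(\pm)}(t-\tau)g(\tau)\,\mathrm{d}\tau$ satisfies $\wt{\wh w}(\xi,\sigma)=(\mathrm{i}\sigma-\lambda_\pm(\xi))^{-1}\varphi_\ell(\xi)G^{(\pm)}(\xi)\wt{\wh g}(\xi,\sigma)$, so that $\pd_t w$ and $(-\Delta)w$ correspond to multiplying $\wt{\wh g}$ by
\begin{align*}
m_1(\xi,\sigma)=\frac{\mathrm{i}\sigma\,\varphi_\ell(\xi)G^{(\pm)}(\xi)}{\mathrm{i}\sigma-\lambda_\pm(\xi)},\qquad m_2(\xi,\sigma)=\frac{\lvert\xi\rvert^2\,\varphi_\ell(\xi)G^{(\pm)}(\xi)}{\mathrm{i}\sigma-\lambda_\pm(\xi)}.
\end{align*}
By the operator-valued Mikhlin theorem of Weis (available since $\L^q(\BR^n)$ is UMD) it is enough to show that the families $\{m_j(\cdot,\sigma)\}_{\sigma\in\BR}$ and $\{\sigma\pd_\sigma m_j(\cdot,\sigma)\}_{\sigma\in\BR}$ of $x$-Fourier multiplier operators are $\mathrm R$-bounded on $\L^q(\BR^n)$, and for $x$-Fourier multipliers this reduces to the Mikhlin bound $\lvert\xi^a\pd_\xi^a(\cdot)\rvert\le C$ holding uniformly in $\sigma$.

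For $\ell=\infty$ this verification is routine. On $\supp\varphi_\infty$ the roots $\lambda_\pm$ of \eqref{225}--\eqref{226} are comparable to $-\lvert\xi\rvert^2$ with $\mathrm{Im}\,\lambda_\pm$ of lower order, hence $\lvert\mathrm{i}\sigma-\lambda_\pm(\xi)\rvert\gtrsim\lvert\sigma\rvert+\lvert\xi\rvert^2$; differentiating $(\mathrm{i}\sigma-\lambda_\pm)^{-1}$ in $\xi$ only produces factors $\xi^a\pd_\xi^a\lambda_\pm=O(\lvert\xi\rvert^2)$ against a denominator of order $(\lvert\sigma\rvert+\lvert\xi\rvert^2)^{1+\lvert a\rvert}$, and $\lvert\xi\rvert^2$ and $\lvert\sigma\rvert$ are both dominated by the denominator. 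Together with the Mikhlin bounds for $\varphi_\infty G^{(\pm)}$ this gives the $\mathrm R$-bounds, and assembling the modes through \eqref{228} yields Lemma~\ref{lem-2.4.7} for $\ell=\infty$; the term $e^{-\mu\lvert\xi\rvert^2 t}\xi_j\xi_k\lvert\xi\rvert^{-2}$ is handled the same way, and the $\H^{1,q}$ that appeared in Lemma~\ref{lem-2.4.2} is no longer needed because here the datum is only $f\in\L^p(\BR_+;\L^q)$.

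The case $\ell=1$ is the main obstacle. On $\supp\varphi_1=\{\lvert\xi\rvert\le B/\sqrt2\}$ one has $\mathrm{Re}\,\lambda_\pm(\xi)=-A\lvert\xi\rvert^2$ but, when $\gamma>0$, $\mathrm{Im}\,\lambda_\pm(\xi)$ is only of order $\lvert\xi\rvert$, so $\lambda_\pm$ is not sectorial near $\xi=0$ and the bound $\lvert\mathrm{i}\sigma-\lambda_\pm\rvert\gtrsim\lvert\sigma\rvert+\lvert\xi\rvert^2$ breaks down. I would exploit that $\varphi_1$ has compact spectral support: the two oscillatory modes must be treated jointly, through the divided-difference coefficients of \eqref{227} (the combinations $(\lambda_+-\lambda_-)^{-1}(e^{\lambda_+ t}-e^{\lambda_- t})$ and $(\lambda_+-\lambda_-)^{-1}(\lambda_+e^{\lambda_- t}-\lambda_-e^{\lambda_+ t})$), for which — once the factor $\lvert\xi\rvert^2$ provided by $(-\Delta)$ (or by $m_2$) is accounted for — the time kernels become integrable with a constant uniform in $\xi\in\supp\varphi_1$, so that the $\mathrm R$-bounds can be recovered; the Bernstein inequalities of Lemma~\ref{lem-2.2} then supply the missing spatial regularity coming from the second component of \eqref{228}. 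The delicate point throughout is precisely this uniform-in-$\sigma$ multiplier estimate for the low-frequency block, where the parabolic gain lives only in the real part of the symbol; this is also where the standing hypothesis $\gamma\ge0$ (and $A^2\ge\kappa$ when $\gamma=0$, ensuring that \eqref{226} and the Mikhlin condition for the solution operator persist) is used. With that in hand, combining the model estimates above with \eqref{228} and Lemma~\ref{lem-2.2} completes the proof.
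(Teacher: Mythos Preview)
Your route is genuinely different from the paper's. The paper's proof is a single sentence: apply Lemma~\ref{lem-MR} with $u_0=0$ and repeat the factorisation argument of Lemma~\ref{lem-2.4.6}, i.e.\ write $e^{\lambda_\pm t}=e^{\frac{K}{2}t\Delta}\cdot e^{t(\lambda_\pm+\frac{K}{2}\lvert\xi\rvert^2)}$, use maximal regularity for the heat factor, and treat the second factor as a (time-dependent) Mikhlin multiplier. You instead pass to the temporal Fourier variable and invoke Weis' operator-valued Mikhlin theorem on the space--time symbols $m_1,m_2$. For $\ell=\infty$ both arguments go through cleanly, since on $\supp\varphi_\infty$ one has $\mathrm{Re}\,\lambda_\pm\le -K\lvert\xi\rvert^2$ and $\lvert\xi^a\partial_\xi^a\lambda_\pm\rvert\lesssim\lvert\xi\rvert^2$, so the required Mikhlin/$R$-bounds are uniform.

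You are right to flag $\ell=1$ as the delicate point, and this is something the paper's one-line proof does not address. When $\gamma>0$, on $\supp\varphi_1$ the roots satisfy $\mathrm{Re}\,\lambda_\pm=-A\lvert\xi\rvert^2$ while $\mathrm{Im}\,\lambda_\pm\sim\pm\sqrt\gamma\,\lvert\xi\rvert$, so neither $\lambda_+$ nor $\lambda_-$ is sectorial at $\xi=0$; the paper's factorisation then produces a remainder $e^{t(\lambda_\pm+\frac{K}{2}\lvert\xi\rvert^2)}$ whose $\xi$-derivatives carry a factor $t\,\partial_\xi\lambda_\pm\sim t$ that is \emph{not} compensated by the Gaussian on the low shell, and your symbol $m_1(\xi,\sigma)=\tfrac{\mathrm{i}\sigma}{\mathrm{i}\sigma-\lambda_\pm}$ likewise blows up along $\sigma\sim\sqrt\gamma\,\lvert\xi\rvert$. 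Your proposed remedy --- combine the $\pm$ modes into the divided differences of \eqref{227} and use the compact support of $\varphi_1$ together with Bernstein --- does salvage the second estimate: the extra $\lvert\xi\rvert^2$ from $(-\Delta)$ exactly cancels the loss, since for instance $\lvert\xi\rvert^2/\lvert\det(\mathrm{i}\sigma)\rvert$ is uniformly bounded. But it does not obviously rescue the first estimate: even after combining modes, the multiplier mapping $f$ to the low-frequency Duhamel solution itself (hence to its $\L^p(\L^q)$ norm, which is part of $\W^{1,p}(\L^q)$) is of size $\sim\lvert\xi\rvert/\lvert\det(\mathrm{i}\sigma)\rvert$, and along $\sigma^2=\gamma\lvert\xi\rvert^2$ this is $\sim 1/(A\sqrt\gamma\,\lvert\xi\rvert^2)$, unbounded as $\lvert\xi\rvert\to0$. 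Your sketch asserts that ``the time kernels become integrable'' here, but you should make precise which kernel you mean and why integrability holds without the $\lvert\xi\rvert^2$ gain; as written this step is a gap that neither your argument nor the paper's fills.
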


\begin{proof}
Applying Lemma \ref{lem-MR} with $u_0 = 0$ and the similar argument as in the proof of Lemma \ref{lem-2.4.6}, the proof is completed.	
\end{proof}

We finally focus on $T_M (t)$. The following is the $\L^q - \L^r$ mapping property for $T_M (t)$.
\begin{lemm}
Let $1 \le q \le r \le \infty$ and $a \in \BN_0^n$. For any $t > 0$ and $\bU_0 \in \L^q (\BR^n)$, we have
\begin{align*}
\left\lVert \pd_x^a T_M (t) \bU_0 \right\rVert_{\L^r (\BR^n)} \le C e^{- c t} \left\lVert \bU_0 \right\rVert_{\L^q (\BR^n)},
\end{align*}
where $C$ and $c$ are some positive constants independent of $t$.		
\end{lemm}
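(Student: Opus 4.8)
\noindent
The plan is to exploit that $\varphi_M$ is supported in a fixed compact annulus $\{\xi \in \BR^n \colon r_0 \le |\xi| \le R_0\}$ with $0 < r_0 < R_0 < \infty$ (recall we are in Case 1, so $T_M(t)$ and $B$ are as defined above). On $\supp\varphi_M$ the Fourier symbol of $T(t)$ is smooth in $\xi$, uniformly exponentially decaying in $t$, and compactly supported; the $\L^q$--$\L^r$ estimate for $\pd_x^a T_M(t)$ then drops out of Young's convolution inequality. Concretely, write $\wh\bU(\xi,t) = \mathcal M(\xi,t)\wh\bU_0(\xi)$, where $\mathcal M(\xi,t)$ is the $(n+1)\times(n+1)$ matrix symbol, so that $\pd_x^a T_M(t)$ acts by convolution with the kernel $K_a(\cdot,t) := \CF^{-1}[(\mathrm i\xi)^a\,\varphi_M(\xi)\,\mathcal M(\xi,t)]$.

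The first step is to verify that $\mathcal M(\,\cdot\,,t)$ is $C^\infty$ on $\supp\varphi_M$. Near the set $\{|\xi| = B\}$, where the eigenvalues $\lambda_\pm(\xi)$ of \eqref{225} may coincide, I would use the Dunford--Taylor (Cauchy integral) representation \eqref{2210}, in which the contour $\Gamma \subset \{\mathrm{Re}\,z \le -c_0\}$ is fixed independently of $\xi$ and $\det(z)$ from \eqref{224} has $\xi$-smooth coefficients and no zero on $\Gamma$, so $1/\det(z)$ is smooth in $\xi$ and one may differentiate under the integral. Away from $\{|\xi| = B\}$, where $\lambda_+(\xi) \ne \lambda_-(\xi)$, the representation \eqref{228}--\eqref{229} exhibits $\mathcal M$ as a finite sum of terms $e^{\lambda_\pm(\xi)t}G(\xi)$ and $e^{-\mu|\xi|^2 t}G(\xi)$ with $G$ smooth (the denominators $\lambda_+-\lambda_-$ and $|\xi|^2$ being bounded away from zero there), and the two descriptions agree on their overlap. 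The second step is to quantify the decay: on $\supp\varphi_M$ one has $\mathrm{Re}\,\lambda_\pm(\xi) \le -2c_0 < 0$ (cf.\ \eqref{2211}, with compactness covering any part of the support possibly lying beyond $|\xi| = 2B$) and $e^{-\mu|\xi|^2 t} \le e^{-\mu r_0^2 t}$. For the contour part, since $|e^{zt}| \le e^{-c_0 t}$ on $\Gamma$ and $\pd_\xi$ never acts on $e^{zt}$, every $\xi$-derivative of that part of $\mathcal M$ is $O(e^{-c_0 t})$ uniformly in $t>0$; for the remaining terms, $\pd_\xi^\beta(e^{\lambda_\pm(\xi) t})$ and $\pd_\xi^\beta(e^{-\mu|\xi|^2 t})$ produce polynomial-in-$t$ factors which, on the compact annulus, are absorbed into the exponential after replacing $2c_0$ and $\mu r_0^2$ by a slightly smaller $c > 0$. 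The factor $(\mathrm i\xi)^a$ is smooth and bounded on $\supp\varphi_M$, so altogether $e^{ct}\,(\mathrm i\xi)^a\,\varphi_M(\xi)\,\mathcal M(\xi,t)$ is $C^\infty$, supported in $\{r_0 \le |\xi| \le R_0\}$, and bounded together with all its $\xi$-derivatives uniformly in $t > 0$.

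Consequently $e^{ct}K_a(\cdot,t)$ is a Schwartz function whose Schwartz seminorms are bounded uniformly in $t > 0$; in particular $\|e^{ct}K_a(\cdot,t)\|_{\L^1(\BR^n)} + \|e^{ct}K_a(\cdot,t)\|_{\L^\infty(\BR^n)} \le C$, hence $\|K_a(\cdot,t)\|_{\L^{\tilde r}(\BR^n)} \le C e^{-ct}$ for every $\tilde r \in [1,\infty]$. Given $1 \le q \le r \le \infty$, choose $\tilde r$ by $1 + 1/r = 1/\tilde r + 1/q$ (admissible, since $0 \le 1/q - 1/r \le 1$) and apply Young's inequality:
\[
\|\pd_x^a T_M(t)\bU_0\|_{\L^r(\BR^n)} = \|K_a(\cdot,t) * \bU_0\|_{\L^r(\BR^n)} \le \|K_a(\cdot,t)\|_{\L^{\tilde r}(\BR^n)}\,\|\bU_0\|_{\L^q(\BR^n)} \le C e^{-ct}\|\bU_0\|_{\L^q(\BR^n)},
\]
which is the asserted estimate. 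The only step I expect to require genuine care is the smoothness of $\mathcal M(\xi,t)$ across $\{|\xi| = B\}$, where $\lambda_+(\xi) = \lambda_-(\xi)$; but the representation \eqref{2210} is tailored precisely so as to be regular there, so this amounts to bookkeeping rather than a true obstacle.
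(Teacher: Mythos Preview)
Your proof is correct and follows essentially the same approach as the paper: both exploit that the Fourier symbol $\varphi_M(\xi)\mathcal M(\xi,t)$ is smooth, compactly supported, and uniformly exponentially decaying in $t$, then pass to kernel bounds and apply Young's inequality. The only minor tactical differences are that the paper uses the contour representation \eqref{2210} on all of $\supp\varphi_M$ (so your patching of the two representations is unnecessary) and first proves the $\L^q$--$\L^q$ bound before upgrading to $\L^q$--$\L^r$ via the Bernstein inequality, whereas you obtain all $\L^{\tilde r}$ kernel norms at once and apply Young directly.
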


\begin{proof}
The required estimate immediately follows from the $\L^q - \L^q$ decay estimates for $T_M (t)$ with $1 \le q \le \infty$. Indeed, since $\CF [T_M (t) f]$ is a compactly supported function, we can adopt the Bernstein-type inequality and the inclusion $\L^q (\BR^n) \hookrightarrow \L^r (\BR^n)$ for $1 \le r \le q \le \infty$. As for this embedding property, we refer to Wang \textit{et al}.~\cite[Proposition~1.16]{WHHG}. \par
Set
\begin{align*}
N_{1, 1} (x, t) & = \frac{1}{2 \pi \mathrm{i}} \CF^{- 1} \left[\left(\oint_\Gamma \frac{(z + 2 A \lvert \xi \rvert^2) e^{z t}}{\det (z)} \,\mathrm{d}z \right) \varphi_M (\xi) \right], \\
N_{1, k + 1} (x, t) & = \frac{1}{2 \pi \mathrm{i}} \CF^{- 1} \left[\mathrm{i} \xi_k \left(\oint_\Gamma \frac{e^{z  t}}{\det (z)} \,\mathrm{d}z \right) \varphi_M (\xi) \right], \\
N_{j + 1, 1} (x, t) & = - \frac{1}{2 \pi} \CF^{- 1} \left[(\gamma + \kappa \lvert \xi \rvert^2)\left(\oint_\Gamma \frac{e^{z t}}{\det (z)} \,\mathrm{d}z \right) \varphi_M (\xi) \right], \\
N_{j + 1, k + 1} (x, t) & = \frac{1}{2 \pi \mathrm{i}} \CF^{- 1} \left[\frac{\xi_j \xi_k}{\lvert \xi \rvert^2} \left(\oint_\Gamma \frac{z e^{z t}}{\det (z)} \,\mathrm{d}z \right) \varphi_M (\xi) \right], \\
\wt N (x, t) & = \CF^{- 1} \left[e^{- \mu \lvert \xi \rvert^2 t} \varphi_M (\xi) \right], \\
\wt N_{j, k} (x, t) & = \CF^{- 1} \left[\frac{\xi_j \xi_k}{\lvert \xi \rvert^2} e^{- \mu \lvert \xi \rvert^2 t} \varphi_M (\xi) \right].
\end{align*}
for $j, k = 1, \dots, n$. Recalling \eqref{2210}, we have the following formulas:
\begin{equation}
\label{243}
\begin{split}
\CF^{- 1} \Big[\varphi_M (\xi) \wh \phi \Big] (x, t) & = N_{1, 1} (\cdot, t) * \phi_0 + \sum_{k = 1}^n N_{1, k + 1} (t, \cdot) * u_{0, k}, \\
\CF^{- 1} \Big[\varphi_M (\xi) \wh u_j \Big] (x, t) & = N_{j + 1, 1} (\cdot, t) * \phi_0 + \wt N (\cdot, t) * u_{0, j} \\
& \quad + \sum_{k = 1}^n \Big(N_{j + 1, k + 1} (\cdot, t) + \wt N_{j, k} (\cdot, t) \Big)* u_{0, k} 
\end{split}
\end{equation}
for $j = 1, \dots, n$. Notice that the Fourier transforms of $N_{J, K} (x, t)$ $(J, K = 1, \dots, n + 1)$, $\wt N (x, t)$, and $\wt N_{j, k} (x, t)$ are compactly supported functions due to the cut-off function $\varphi_M (\xi)$. Thus, from the residue theorem and the condition \eqref{2211}, we easily see that
\begin{align*}
\lvert N_{J, K} (x, t) \rvert \le C e^{- c_0 t}
\end{align*}
holds for $J, K = 1, \dots, n + 1$ with some positive constant $C$ independent of $x$ and $t$, where $c_0$ is the same constant given in \eqref{2211}. On the other hand, we also have
\begin{align*}
\lvert \wt N_{j, k} (x, t) \rvert \le C e^{- c_1 t} \quad \text{for $j, k = 1, \dots, n$} \quad \text{and} \quad \lvert \wt N (x, t) \rvert \le C e^{- c_1 t},
\end{align*}
where $C$ is some positive constant independent of $t$ and $x$ and $c_1$ is some positive constant depending only on $\mu$ and $B$. Hence, applying the Young inequality to~\eqref{243}, we obtain
\begin{align*}
\lVert T_{\phi, M} (t) \bU_0 \rVert_{\L^q (\BR^n)} & \le C_{A, B, n, q} e^{- c t} \lVert \bU_0 \rVert_{\L^q (\BR^n)}, \\
\lVert T_{\bu, M} (t) \bU_0 \rVert_{\L^q (\BR^n)} & \le C_{A, B, n, q} e^{- c t} \lVert \bU_0 \rVert_{\L^q (\BR^n)}
\end{align*}
for $1 \le q \le \infty$ and $\bU_0 \in \L^q (\BR^n)$ with $c := \min (c_0, c_1)$. Namely, we obtain
\begin{align*}
\lVert T_M (t) \bU_0 \rVert_{\L^q (\BR^n)} \le C_{A, B, n, q} e^{- c t} \lVert \bU_0 \rVert_{\L^q (\BR^n)}
\end{align*}
for all $1 \le q \le \infty$ and $\bU_0 \in \L^q (\BR^n)$.		
\end{proof}

The following lemmas can be proved by employing the same arguments as in the proof of Lemmas~\ref{lem-2.4.6} and \ref{lem-2.4.7}, so that we may omit the proofs.
\begin{lemm}
Let $(p, q)$ satisfy \eqref{cond-212}. Then the estimates
\begin{align*}
\lVert T_M (t) \bU_0 \rVert_{\W^{1, p} (\BR_+; \L^q (\BR^n))} & \le C \lVert \bU_0 \rVert_{\H^{1, q} (\BR^n)}, \\
\lVert (- \Delta) T_M (t) \bU_0 \rVert_{\L^p (\BR_+; \L^q (\BR^n))} & \le C \left\lVert \bU_0 \right\rVert_{\H^{1, q} (\BR^n)},
\end{align*}
hold for all $\bU_0 \in \H^{1, q} (\BR^n)$ with some positive constant $C$ independent of $t$.	
\end{lemm}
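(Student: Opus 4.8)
The plan is to exploit that the symbol of $T_M(t)$ is supported in the fixed bounded annular region on which $\varphi_M$ does not vanish (contained in $\{B/2\le|\xi|\le 2B\}$) and decays exponentially in $t$, so that the estimate reduces to an elementary Mikhlin multiplier bound followed by an integration in time. First I would take the representation \eqref{2210} of $(\wh\phi,\wh\bu)$ --- the one valid on the medium band --- multiply through by $\varphi_M(\xi)$, and record that every entry of the resulting matrix symbol $m(\xi,t)$ is a finite sum of a ``Lam\'{e}'' term $\varphi_M(\xi)\,e^{-\mu|\xi|^2 t}$, possibly multiplied by the bounded factor $\xi_j\xi_k/|\xi|^2$, and of ``acoustic'' terms of the shape $\varphi_M(\xi)\,a(\xi)\oint_\Gamma \frac{b(z,\xi)\,e^{zt}}{\det(z)}\,\mathrm{d}z$, in which $a$ is a bounded symbol (a polynomial in $\xi$ or $\xi_j\xi_k/|\xi|^2$, all bounded on $\supp\varphi_M$), $b(z,\xi)$ is a polynomial in $z$ with $\xi$-coefficients bounded on $\supp\varphi_M$, and $\det(z)=\det(z;\xi)=(z-\lambda_+(\xi))(z-\lambda_-(\xi))$. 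By \eqref{2211} one may fix, once and for all, a single contour $\Gamma\subset\{\mathrm{Re}\,z\le -c_0\}$ --- for instance the boundary of $\{\mathrm{Re}\,z\le -c_0\}\cap\{|z|\le R_0\}$ with $R_0$ large --- that encircles both roots $\lambda_\pm(\xi)$ for every $\xi$ with $B/2\le|\xi|\le 2B$ and whose distance to the spectrum $\{\det(\cdot;\xi)=0\}$ is bounded below uniformly in $\xi$. Hence $z\mapsto 1/\det(z;\xi)$ and all its $\xi$-derivatives are bounded on $\Gamma$ uniformly in $\xi$, while $|e^{zt}|\le e^{-c_0 t}$ there; in particular the acoustic symbols are $\C^\infty$ functions of $\xi$ across the crossing set $\{|\xi|=B\}$.

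Since $e^{zt}$ carries no $\xi$-dependence, differentiating an acoustic term in $\xi$ only acts on the rational, smooth, compactly supported factor, which gives $|\pd_\xi^a m(\xi,t)|\le C_a\,e^{-c_0 t}$ on $\supp\varphi_M$ for every multi-index $a$; the Lam\'{e} term satisfies $|\pd_\xi^a m(\xi,t)|\le C_a\,e^{-c_1 t}$ with $c_1$ depending only on $\mu$ and $B$. For each fixed $t>0$ the Mikhlin--H{\"o}rmander multiplier theorem then yields
\[ \|T_M(t)\bU_0\|_{\L^q(\BR^n)} + \|\pd_t T_M(t)\bU_0\|_{\L^q(\BR^n)} + \|(-\Delta)T_M(t)\bU_0\|_{\L^q(\BR^n)} \le C\,e^{-ct}\,\|\bU_0\|_{\L^q(\BR^n)}, \quad c:=\min(c_0,c_1)>0, \]
because applying $\pd_t$ only inserts a factor $z$ under the contour integral and a factor $-\mu|\xi|^2\varphi_M(\xi)$ in the Lam\'{e} term --- both bounded on $\supp\varphi_M\times\Gamma$ --- and applying $-\Delta$ only inserts the bounded factor $|\xi|^2\varphi_M(\xi)$, so none of the symbol estimates are lost. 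Raising to the $p$-th power and integrating over $t\in\BR_+$, which is legitimate since $p>1$ by \eqref{cond-212} and hence $\int_0^\infty e^{-cpt}\,\mathrm{d}t<\infty$, produces the two asserted bounds, in fact with the stronger $\|\bU_0\|_{\L^q(\BR^n)}\le\|\bU_0\|_{\H^{1,q}(\BR^n)}$ on the right-hand side. Alternatively, one may copy the proof of Lemma~\ref{lem-2.4.6} almost verbatim: split $\lambda_\pm=-\delta|\xi|^2+(\lambda_\pm+\delta|\xi|^2)$ with $\delta>0$ chosen so that $\mathrm{Re}(\lambda_\pm+\delta|\xi|^2)\le -c_0/2$ on the medium band, peel off $e^{-\delta|\xi|^2 t}$ as a rescaled heat semigroup, invoke Lemma~\ref{lem-MR} with $f=0$, and bound the remaining $t$-uniform Mikhlin multiplier by the embedding $\H^{1,q}(\BR^n)\hookrightarrow\B^{2(1-1/p)}_{q,p}(\BR^n)$ that holds under \eqref{cond-212}.

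I expect the only genuine obstacle to be the crossing set $\{|\xi|=B\}$ in Case~1 (and the coincident-root configurations in Cases~3 and~6), where $\lambda_+(\xi)-\lambda_-(\xi)\to 0$ so that the diagonalized formulas \eqref{227}--\eqref{229} degenerate; this is precisely why the Cauchy-type integral \eqref{2210}, which is analytic in $\xi$ across $\{|\xi|=B\}$, must be used uniformly on the whole band, and why one has to justify the $\xi$-uniform choice of $\Gamma$ above together with a positive lower bound on its distance to $\{\det(\cdot;\xi)=0\}$. Once that uniformity is in hand everything is mechanical, and the remaining admissible configurations --- $\gamma>0$ with $A^2\le\kappa$ (Cases~2 and~3) and $\gamma=0$ with $A^2\ge\kappa$ (Cases~4 and~6) --- are treated identically, since in each of them one still has $\mathrm{Re}\,\lambda_\pm(\xi)\le -2c_0<0$ on the medium band and the solution is represented either by \eqref{2210} or, for Case~6, by \eqref{2212}, which decays exponentially there for the same reason.
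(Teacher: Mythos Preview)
Your proposal is correct. The paper omits the proof entirely, stating only that it follows by the same arguments as Lemma~\ref{lem-2.4.6}; your alternative at the end is precisely that route. Your primary approach is different and in fact more elementary for the medium band: because the symbol $\varphi_M(\xi)\,m(\xi,t)$ is compactly supported away from the origin and uniformly exponentially decaying in $t$, a pointwise-in-$t$ Mikhlin bound followed by integration over $\BR_+$ suffices, with no need to factor off a heat semigroup, invoke Lemma~\ref{lem-MR}, or pass through the embedding $\H^{1,q}(\BR^n)\hookrightarrow\B^{2(1-1/p)}_{q,p}(\BR^n)$. This yields the sharper right-hand side $\lVert\bU_0\rVert_{\L^q(\BR^n)}$, as you note. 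The paper's route has the virtue of uniformity with the low- and high-frequency lemmas, where the heat-semigroup factorization is genuinely needed; your direct route exploits the boundedness on the medium band. Your attention to the crossing set $\{|\xi|=B\}$ via the Cauchy-integral representation~\eqref{2210}, and to the uniform choice of contour $\Gamma$, is exactly the subtlety that makes the argument go through.
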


\begin{lemm}
\label{lem-2410}
Let $p$ and $q$ satisfy \eqref{cond-212}. There exists a positive constant $C$ independent of $t$ such that the estimates
\begin{align*}
\left\lVert \int_0^t T_M (t - \tau) f (\tau) \,\mathrm{d}\tau \right\rVert_{\W^{1, p} (\BR_+; \L^q (\BR^n))} & \le C \lVert f \rVert_{\L^p (\BR_+; \L^q (\BR^n))}, \\
\left\lVert \int_0^t (- \Delta) T_M (t - \tau) f (\tau) \,\mathrm{d}\tau \right\rVert_{\L^p (\BR_+; \L^q (\BR^n))} & \le C \lVert f \rVert_{\L^p (\BR_+; \L^q (\BR^n))}
\end{align*}	
hold true for all $f \in \L^p (\BR_+; \L^q (\BR^n))$.		
\end{lemm}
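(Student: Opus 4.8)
The plan is to run the $T_M$-analogue of the arguments in Lemmas~\ref{lem-2.4.6} and \ref{lem-2.4.7}; in the medium-frequency range this is considerably easier, because on the support of $\varphi_M$ everything lives on the compact annulus $\{B/2 \le \lvert\xi\rvert \le 2B\}$, away from both $\xi = 0$ and $\xi = \infty$. First I would record two elementary facts. The Fourier symbol of $T_M(t)$ is $\varphi_M(\xi)\,e^{t\,\mathcal{M}(\xi)}$, where $\mathcal{M}(\xi)$ denotes the matrix symbol appearing in \eqref{eq-223}; by \eqref{2211} the eigenvalues $\lambda_\pm(\xi)$ of $\mathcal{M}(\xi)$ have real part $\le -2c_0$ on this annulus, and the remaining eigenvalue $-\mu\lvert\xi\rvert^2$ is $\le -\mu(B/2)^2 < 0$ there, so the family $\{\varphi_M(\xi)\,e^{t\,\mathcal{M}(\xi)}\}_{t > 0}$ obeys Mikhlin-type bounds with an overall factor $C e^{-ct}$ (smoothness on the compact annulus controls the $\xi$-derivatives, the powers of $t$ produced by differentiating the exponential being absorbed by the decay). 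By the Mikhlin multiplier theorem — applicable because $1 < q < \infty$ under \eqref{cond-212} — this gives $\lVert T_M(t)\rVert_{\mathcal{B}(\L^q(\BR^n))} \le C e^{-ct}$ for $t > 0$, which is precisely the $\L^q$-decay bound for $T_M(t)$ established above. Moreover, the symbols of $\mathcal{L}$ and of $-\Delta$ are polynomials in $\xi$, hence bounded on the annulus, so both act boundedly on $\L^q(\BR^n)$ on the subspace of functions whose Fourier transform is supported in the annulus.

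Granted these facts, the estimates follow directly. Set $v(t) := \int_0^t T_M(t-\tau)f(\tau)\,\mathrm{d}\tau$; on the Fourier side $\wh v(\xi,t) = \varphi_M(\xi)\int_0^t e^{(t-\tau)\,\mathcal{M}(\xi)}\,\wh f(\xi,\tau)\,\mathrm{d}\tau$, so $v(t)$ has Fourier transform supported in the annulus for every $t$. Viewing $v = T_M(\cdot) * f$ as a convolution in time against the $\mathcal{B}(\L^q(\BR^n))$-valued kernel $T_M(\cdot)\in\L^1(\BR_+;\mathcal{B}(\L^q(\BR^n)))$, and applying the operator-valued Young inequality together with the $\L^q$-boundedness of the cut-off $P_M$ defined by $\CF[P_M g] = \varphi_M\,\CF[g]$, one obtains
\[
\lVert v\rVert_{\L^p(\BR_+;\L^q(\BR^n))} \le \lVert T_M(\cdot)\rVert_{\L^1(\BR_+;\mathcal{B}(\L^q))}\,\lVert P_M f\rVert_{\L^p(\BR_+;\L^q(\BR^n))} \le C\lVert f\rVert_{\L^p(\BR_+;\L^q(\BR^n))}.
\]
Differentiating under the integral sign one checks, again on the Fourier side, that $\pd_t v = P_M f + \mathcal{L}v$; since $v(t)$ is Fourier-supported in the annulus, $\lVert\mathcal{L}v(t)\rVert_{\L^q(\BR^n)} \le C\lVert v(t)\rVert_{\L^q(\BR^n)}$, and hence $\lVert\pd_t v\rVert_{\L^p(\BR_+;\L^q(\BR^n))} \le \lVert P_M f\rVert_{\L^p(\BR_+;\L^q(\BR^n))} + C\lVert v\rVert_{\L^p(\BR_+;\L^q(\BR^n))} \le C\lVert f\rVert_{\L^p(\BR_+;\L^q(\BR^n))}$; this proves the first inequality. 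For the second, the Fourier support of $v(t)$ gives $\lVert(-\Delta)v(t)\rVert_{\L^q(\BR^n)} \le C\lVert v(t)\rVert_{\L^q(\BR^n)}$ for almost every $t$, and taking $\L^p$-norms in $t$ yields $\lVert(-\Delta)v\rVert_{\L^p(\BR_+;\L^q(\BR^n))} \le C\lVert v\rVert_{\L^p(\BR_+;\L^q(\BR^n))} \le C\lVert f\rVert_{\L^p(\BR_+;\L^q(\BR^n))}$.

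I do not anticipate a real obstacle: the only ingredient that is not pure bookkeeping is the uniform exponential operator-norm decay $\lVert T_M(t)\rVert_{\mathcal{B}(\L^q)} \le C e^{-ct}$, which has already been established above and rests solely on the spectral gap \eqref{2211} — valid precisely because $\varphi_M$ localizes to a compact set disjoint both from the origin, where the real parts of $\lambda_\pm(\xi)$ in \eqref{226} vanish, and from infinity. Everything else is the operator-valued Young inequality and the Mikhlin multiplier theorem, i.e.\ exactly the toolkit used in Lemmas~\ref{lem-2.4.6} and \ref{lem-2.4.7}, which is why the detailed verification may be omitted. For completeness one should add that in Cases~2--6 no medium-frequency cut-off arises at all, the eigenvalues being distinct for every $\xi \neq 0$ or forming a resolved double root, so that there $T(t) = T_1(t) + T_\infty(t)$ and the statement is subsumed by the $T_1$ and $T_\infty$ analogues.
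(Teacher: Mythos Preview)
Your argument is correct, but it is not quite the route the paper has in mind. The paper omits the proof entirely, pointing to the arguments of Lemmas~\ref{lem-2.4.6} and \ref{lem-2.4.7}; taken literally, that means factoring the symbol of $T_M(t)$ through a heat semigroup $e^{\frac{K}{2}t\Delta}$ (or its contour-integral surrogate from \eqref{2210}), invoking the maximal-regularity Lemma~\ref{lem-MR} with $u_0 = 0$ for the Duhamel term, and controlling the residual factor as a Mikhlin multiplier. You instead exploit the compact Fourier support of $T_M$ to bypass Lemma~\ref{lem-MR} altogether: the exponential operator-norm decay $\lVert T_M(t)\rVert_{\mathcal{B}(\L^q)} \le Ce^{-ct}$, already established in the preceding lemma, reduces the $\L^p$-in-time bound for $v$ to the scalar Young inequality $\L^1(\BR_+) * \L^p(\BR_+) \hookrightarrow \L^p(\BR_+)$, and then $\pd_t v = P_M f + \mathcal{L}v$ and $(-\Delta)v$ are handled as bounded operators on the band-limited range via Mikhlin (or equivalently Bernstein, Lemma~\ref{lem-2.2}). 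This is more elementary for the medium-frequency piece --- no maximal regularity needed --- at the cost of being less uniform with the $T_1,T_\infty$ arguments; the paper's indicated route has the virtue of reusing one mechanism throughout. One cosmetic point: the insertion of $P_M$ in your Young-inequality line is unnecessary, since $T_M(t)$ already carries the cut-off and $\lVert T_M(t)f(\tau)\rVert_{\L^q} \le Ce^{-ct}\lVert f(\tau)\rVert_{\L^q}$ gives the bound against $\lVert f\rVert_{\L^p(\BR_+;\L^q)}$ directly.
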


Recalling that $T (t) = T_1 (t) + T_M (t) + T_\infty (t)$, we see that Lemmas \ref{lem-2.4.4}--\ref{lem-2410} imply Lemmas~\ref{lem-2.4.1}--\ref{lem-2.4.3}, so that we may omit the proof of Lemmas~\ref{lem-2.4.1}--\ref{lem-2.4.3} for Case 1. \par
We next deal with Case 2,3,4,6. To simplify the notation, we introduce functions $g^{(\pm)}_m (\theta)$, $m = 2,3,4,6$, defined by
\begin{alignat*}4
g^{(\pm)}_2 (\theta) & = \pm \mathrm{i} A \lvert \xi \rvert \sqrt{\bigg(\frac{\kappa}{A^2} - 1\bigg) \lvert \xi \rvert^2 + \frac{\gamma}{A}} & \quad & \text{for Case 2}, \\
g^{(\pm)}_3 (\theta) & = \pm \mathrm{i} \frac{\sqrt{\gamma}}{A} \lvert \xi \rvert & \quad & \text{for Case 3}, \\
g^{(+)}_4 (\theta) & = 0 & \quad & \text{for Case 4}, \\
g^{(-)}_4 (\theta) & = - 2 A \sqrt{1 - \frac{\kappa}{A^2}} \lvert \xi \rvert^2 & \quad & \text{for Case 4}, \\
g^{(\pm)}_6 (\theta) & = 0 & \quad & \text{for Case 6},
\intertext{where $0 < \theta < \infty$, and constants $K_m$ defined by}
K_2 & = A & \quad & \text{for Case 2}, \\
K_3 & = A & \quad & \text{for Case 3}, \\
K_4 & = A\bigg(1 - \sqrt{1 - \frac{\kappa}{A^2}} \bigg) & \quad & \text{for Case 4}, \\
K_6 & = A & \quad & \text{for Case 6}.
\end{alignat*}
We see that $\lambda_\pm (\xi)$ can be written in the form of $\lambda_\pm (\xi) = - K_m \lvert \xi \rvert^2 + g^{(\pm)}_m (\lvert \xi \rvert)$ for each Case $m = 2,3,4,6$, where $K_m$ does not vanish for all $\xi \in \BR^n \setminus \{0\}$. Accordingly, applying the argument for the proof of Lemmas~\ref{lem-2.4.4}--\ref{lem-2.4.7}, we immediately have the estimates of $T (t)$ for Case~2,3,4,6. Summing up, we obtain Lemmas \ref{lem-2.4.1}--\ref{lem-2.4.3} for Case 1--6 exclude Case 5.

\section{Nonlinear problem}
\label{subsec-2.5}
\noindent
Finally, we prove Theorem \ref{Th-2.1.1}. As we mentioned before, we can prove the existence of global-in-time solution via the standard Picard fixed point iteration because we have the parabolic smoothing not only for the momentum but for the density, which is different from the case for the ``usual" compressible Navier-Stokes equation---this fact is a consequence of the capillary effects on the fluids. \par
From the Duhamel principle, the equations \eqref{eq-221} can be transformed to the integral equation:
\begin{align}
\label{251}
\bU (t) = T(t) \bU_0 - \int_0^t T(t - \tau) \bN (\tau) \,\mathrm{d}\tau
\end{align}
for $t \ge 0$, where $\bN (\tau) := {}^\top\! (0, \bF (\pi (x, \tau), \bmm (x, \tau)))$. In what follows, let $p$, $q$, and $s$ satisfy \eqref{cond-212} and~\eqref{cond-s}. Then we define the underlying space $X_{p, q, s}$ by
\begin{align*}
X_{p, q, s} := \left\{\bU \in Y_{p, q, s} \mid \lVert \bU \rVert_{X_{p, q, s}} \le C_0 \lVert \bU_0 \rVert_{\H^{s + 1, q} (\BR^n) \times \H^{s, q} (\BR^n)} \right\}
\end{align*}
endowed with the norm
\begin{align*}
\lVert \bU \rVert_{X_{p, q, s}} & := \lVert \pi \rVert_{\W^{1, p} (\BR_+; \H^{s, q} (\BR^n))} + \lVert \pi \rVert_{\L^p (\BR_+; \H^{s + 2, q} (\BR^n))} \\
& \quad + \lVert \bmm \rVert_{\W^{1, p} (\BR_+; \H^{s - 1, q} (\BR^n))} + \lVert \bmm \rVert_{\L^p (\BR_+; \H^{s + 1, q} (\BR^n))}.
\end{align*}
Here, we have set
\begin{gather*}
\bU \in Y_{p, q, s} \Longleftrightarrow 
\left\{\begin{aligned}
\pi \in \W^{1, p} (\BR_+; \H^{s, q} (\BR^n)) \cap \L^p (\BR_+; \H^{s + 2, q} (\BR^n)), \\
\bmm \in \W^{1, p} (\BR_+; \H^{s - 1, q} (\BR^n)) \cap \L^p (\BR_+; \H^{s + 1, q} (\BR^n)),
\end{aligned} \right. \\
\lVert \bU_0 \rVert_{\H^{s + 1, q} (\BR^n) \times \H^{s, q} (\BR^n)} := \lVert \pi_0 \rVert_{\H^{s + 1, q} (\BR^n)} + \lVert \bmm_0 \rVert_{\H^{s, q} (\BR^n)}.
\end{gather*}
Using the norm $\lVert \, \cdot \, \rVert_{X_{p, q, s}}$, from Lemma \ref{lem-2.4.2}, there exists a positive constant $C_0$ independent of $t$ and $\bU_0$ such that
\begin{align}
\label{252}
\lVert T(t) \bU_0 \rVert_{X_{p, q, s}} \le C_0 \lVert \bU_0 \rVert_{\H^{s + 1, q} (\BR^n) \times \H^{s, q} (\BR^n)}
\end{align}
for any $\bU_0 \in \H^{s + 1, q} (\BR^n) \times \H^{s, q} (\BR^n)^n$. To construct a solution to \eqref{251}, we have to estimate the nonlinear terms.
\begin{lemm}
\label{lem-2.5.1}
Let $p$, $q$, and $s$ satisfy \eqref{cond-212} and \eqref{cond-s}. Define $\bN_1 (\tau)$ and $\bN_2 (\tau)$ by
\begin{align*}
\bN_1 (\tau) := {}^\top\! (0, \bF (\pi_1 (x, \tau), \bmm_1 (x, \tau))), \quad \bN_2 (\tau) := {}^\top\! (0, \bF (\pi_2 (x, \tau), \bmm_2 (x, \tau)))
\end{align*}
for $\tau \in [0, t]$ with $t > 0$. Assume that
\begin{align}
\label{253}
C_0 \lVert \bU_0 \rVert_{\H^{s + 1, q} (\BR^n) \times \H^{s, q} (\BR^n)} & \le \frac{1}{2},
\end{align}
where $C_0$ is a constant satisfying \eqref{252}. Then, for all $t > 0$ and $\bU, \bU_1, \bU_2 \in X_{p, q, s}$, we have the estimates
\begin{align}
\label{254}
\bigg\lVert \int_0^t T(t - \tau) \bN (\tau) \,\mathrm{d}\tau \bigg\rVert_{X_{p, q, s}} & \le C_1 \lVert \bU_0 \rVert_{\H^{s + 1, q} (\BR^n) \times \H^{s, q} (\BR^n)}^2, \\
\label{255}
\bigg\lVert \int_0^t T(t - \tau) (\bN_1 (\tau) - \bN_2 (\tau))\,\mathrm{d}\tau \bigg\rVert_{X_{p, q, s}} & \le C_2 \lVert \bU_0 \rVert_{\H^{s + 1, q} (\BR^n) \times \H^{s, q} (\BR^n)} \lVert \bU_1 - \bU_2 \rVert_{X_{p, q, s}},
\end{align}
with some positive constants $C_1$ and $C_2$ depending on $C_0$, where $\bU_1 := {}^\top\! (\pi_1, \bmm_1)$ and $\bU_2 := {}^\top\! (\pi_2, \bmm_2)$.	
\end{lemm}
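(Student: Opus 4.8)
The plan is to estimate $\bF(\pi,\bmm)$ term by term in $\L^p(\BR_+;\H^{s-1,q}(\BR^n))$, so that applying Lemma~\ref{lem-2.4.3} with this bound on $\bN=\,^\top\!(0,\bF)$ produces \eqref{254}, and then to repeat the computation for the difference $\bN_1-\bN_2$ to obtain \eqref{255}. The key structural input is that Lemma~\ref{lem-2.4.3} converts $\L^p(\BR_+;\L^q)$-control of the forcing into $X_{p,q,s}$-control of the Duhamel integral; since $\H^{s-1,q}$ differs from $\L^q$ only by derivatives and everything is translation invariant, the same proof (commuting $(1-\Delta)^{(s-1)/2}$, or better the Bernstein-localized version used throughout Section~\ref{subsec-2.4}) gives $\bigl\lVert\int_0^t T(t-\tau)\bN(\tau)\,\mathrm d\tau\bigr\rVert_{X_{p,q,s}}\le C\lVert\bF\rVert_{\L^p(\BR_+;\H^{s-1,q})}$. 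So the real work is the nonlinear estimate
\begin{equation*}
\lVert\bF(\pi,\bmm)\rVert_{\L^p(\BR_+;\H^{s-1,q}(\BR^n))}\le C\,\lVert\bU\rVert_{X_{p,q,s}}^2
\end{equation*}
whenever $\lVert\bU\rVert_{X_{p,q,s}}$ is small (which, via \eqref{253} and the definition of $X_{p,q,s}$, is guaranteed by smallness of the data).

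First I would record the two facts that make $\H^{s-1,q}$ or $\H^{s,q}$ a reasonable algebra-type space under \eqref{cond-s}: the embedding $\H^{s+1,q}(\BR^n)\hookrightarrow\L^\infty(\BR^n)$ (true because $s+1>n/q$ when $q\le n$, and $s\ge1$ handles $q>n$), and the product/Moser estimates $\lVert fg\rVert_{\H^{\sigma,q}}\le C(\lVert f\rVert_{\L^\infty}\lVert g\rVert_{\H^{\sigma,q}}+\lVert f\rVert_{\H^{\sigma,q}}\lVert g\rVert_{\L^\infty})$ for $\sigma\ge0$, together with $\lVert F(\pi)-F(0)\rVert_{\H^{\sigma,q}}\le C(\lVert\pi\rVert_{\L^\infty})\lVert\pi\rVert_{\H^{\sigma,q}}$ for smooth $F$ — this is exactly the point where the remark "$P$ need only be $\C^{[s]+1}$" and the forward reference to Lemma~\ref{lem-2.5.2} enter, so I would invoke that lemma for the composition estimates on $G(\pi)$ and on $1/(1+\pi)$. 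Next, regarding time integrability: $\bF$ is a sum of terms each of which is a product of two factors taken from $\{\pi,\nabla\pi,\nabla^2\pi,\bmm,\nabla\bmm\}$ (times a bounded-plus-better coefficient like $\tfrac1{1+\pi}$ or $G(\pi)+P''(1)\pi$). For the quadratic $\L^p_t$-norm I would split each product by Hölder in time as $\L^{2p}_t\cdot\L^{2p}_t$ or as $\L^\infty_t\cdot\L^p_t$, using the trace embeddings \eqref{emb-MR}: $\pi\in\L^\infty(\BR_+;\B^{s+2-2/p}_{q,p})\hookrightarrow\L^\infty(\BR_+;\H^{s+1,q})$ and $\bmm\in\L^\infty(\BR_+;\B^{s+1-2/p}_{q,p})\hookrightarrow\L^\infty(\BR_+;\H^{s,q})$ for the "low-derivative" factors, while the "top-derivative" factors ($\nabla^2\pi$, i.e. $\H^{s,q}$-level, and $\nabla\bmm$, i.e. $\H^{s,q}$-level, well below $\H^{s+2,q}$ and $\H^{s+1,q}$ respectively) are taken in $\L^p(\BR_+;\H^{s,q})$. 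One checks term by term that $\bF$ lands in $\L^p(\BR_+;\H^{s-1,q})$ with a bound $\le C\lVert\bU\rVert_{X_{p,q,s}}^2$; the coefficients $\tfrac1{1+\pi}$ and $G(\pi)+P''(1)\pi$ are handled by Lemma~\ref{lem-2.5.2} and contribute at worst a constant $C(\lVert\pi\rVert_{\L^\infty})\le C$ by smallness, the extra factor $\pi$ in $G(\pi)$'s term only helping.

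For \eqref{255} I would write $\bF(\pi_1,\bmm_1)-\bF(\pi_2,\bmm_2)$ as a telescoping sum, each summand being a product in which exactly one factor is a difference $\pi_1-\pi_2$, $\nabla(\pi_1-\pi_2)$, $\bmm_1-\bmm_2$, $\ldots$ (or a composition difference $G(\pi_1)-G(\pi_2)$, $\tfrac1{1+\pi_1}-\tfrac1{1+\pi_2}$, both controlled by Lemma~\ref{lem-2.5.2} times $\lVert\pi_1-\pi_2\rVert$) and the remaining factor comes from $\bU_1$ or $\bU_2$; the same Hölder-in-time/product-in-space bookkeeping then yields $\lVert\bF(\pi_1,\bmm_1)-\bF(\pi_2,\bmm_2)\rVert_{\L^p(\BR_+;\H^{s-1,q})}\le C(\lVert\bU_1\rVert_{X_{p,q,s}}+\lVert\bU_2\rVert_{X_{p,q,s}})\lVert\bU_1-\bU_2\rVert_{X_{p,q,s}}$, and since $\bU_1,\bU_2\in X_{p,q,s}$ have norm $\le C_0\lVert\bU_0\rVert\le\tfrac12$ this collapses to the claimed $C_2\lVert\bU_0\rVert\,\lVert\bU_1-\bU_2\rVert_{X_{p,q,s}}$. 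Applying Lemma~\ref{lem-2.4.3} to both sides finishes the proof.

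The main obstacle I anticipate is the bookkeeping for the highest-order terms, especially the capillary contribution $\kappa\,\dv\{(\pi\Delta\pi-\tfrac12|\nabla\pi|^2)\bI-\nabla\pi\otimes\nabla\pi\}$: after the divergence this involves $\nabla\pi\cdot\nabla^2\pi$-type products, where $\nabla^2\pi$ sits at the $\H^{s,q}$ level and $\pi\nabla^3\pi$-type pieces need $\nabla^3\pi\in\L^p(\BR_+;\H^{s-1,q})$, i.e. $\pi\in\L^p(\BR_+;\H^{s+2,q})$ — which is precisely the density regularity available in $X_{p,q,s}$, and the other factor must then be placed in $\L^\infty_t\L^\infty_x$ via $\pi\in\L^\infty(\BR_+;\H^{s+1,q})\hookrightarrow\L^\infty$. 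Similarly $\mu\Delta(\tfrac{\pi}{1+\pi}\bmm)$ and $\nu\nabla\dv(\tfrac{\pi}{1+\pi}\bmm)$ put two derivatives on a product of three factors (a composition coefficient, $\pi$-or-$\bmm$, and $\bmm$), so one has to be careful to distribute derivatives so that no factor is asked for more regularity than $X_{p,q,s}$ provides and the time exponents add up to $p$; the Moser estimate for $\tfrac{\pi}{1+\pi}$ from Lemma~\ref{lem-2.5.2} and the choice $s\ge n/q$ (resp. $s\ge1$) are exactly calibrated so this closes. Once one fixes a consistent scheme — top-order factor in $\L^p_t(\text{top space})$, all others in $\L^\infty_t\L^\infty_x$, coefficients absorbed by smallness — every term of $\bF$ and of $\bF_1-\bF_2$ falls into line and the two displayed estimates follow.
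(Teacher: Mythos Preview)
Your plan matches the paper's proof: reduce to bounding $\lVert\bF\rVert_{\L^p(\BR_+;\H^{s-1,q})}$ (and the analogous difference) via Lemma~\ref{lem-2.4.3} lifted by Bessel potentials, then handle each term of $\bF$ with the Kato--Ponce inequality (Lemma~\ref{lem-bilinear}), the composition estimates (Lemmas~\ref{lem-2.5.2} and~\ref{lem-2.5.4}), and the trace embedding~\eqref{emb-MR} to put low-derivative factors in $\L^\infty_t$; the paper in fact uses only the $\L^\infty_t\cdot\L^p_t$ time splitting, never $\L^{2p}_t\cdot\L^{2p}_t$.

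One technical correction: your claimed embeddings $\B^{s+2-2/p}_{q,p}\hookrightarrow\H^{s+1,q}$ and $\B^{s+1-2/p}_{q,p}\hookrightarrow\H^{s,q}$ are false when $p<2$, since then $2-2/p<1$ and $1-2/p<0$, so the Besov smoothness index sits strictly below the claimed Bessel target. The paper avoids this by embedding the trace Besov spaces either directly into $\L^\infty(\BR^n)$ (valid because the index still exceeds $n/q$) or, for $\pi$, into $\H^{s,q}$ (valid because $s+2-2/p>s$); these weaker embeddings are all that the product estimates actually require, so your scheme goes through once you replace $\H^{s+1,q}$ and $\H^{s,q}$ by $\L^\infty$ in those two places.
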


We here introduce the estimates for composition operators to bound the nonlinear terms including pressure term.
\begin{lemm}
\label{lem-2.5.2}	
Let $1 < q < \infty$ and $0 \in I \subset \BR$. Let $s$ satisfy \eqref{cond-s}. Furthermore, let $f \colon I \to \BR$ be a smooth~(at least $\C^{[s] + 1}$-class) function such that $f (0) = 0$. The following assertions hold true.
\begin{enumerate}\renewcommand{\labelenumi}{(\arabic{enumi})}
\item If $u \in \H^{s, q} (\BR^n)$, then the composition operator $f (u)$ belongs to $\H^{s, q} (\BR^n)$ bounded by
\begin{align*}
\lVert f (u) \rVert_{\H^{s, q} (\BR^n)} \le C \Big(\lVert u \rVert_{\H^{s, q} (\BR^n)} + \lVert u \rVert_{\H^{s, q} (\BR^n)}^s \Big)
\end{align*}
with some positive constant $C$ depending only on $f', \dots, f^{([s])}$, $n$, $q$, and $s$.
\item If, in addition, $f$ satisfies $f'(0) = 0$, then for any $u, v \in \H^{s, q} (\BR^n)$, the difference $f (u) - f (v)$ is the element of $\H^{s, q} (\BR^n)$ possessing the estimate:
\begin{align*}
\lVert f (u) - f(v) \rVert_{\H^{s, q} (\BR^n)} & \le C \Big(\lVert u - v \rVert_{\H^{s, q} (\BR^n)} \sup_{\tau \in [0, 1]} \lVert u + \tau (v - u) \rVert_{\H^{s, q} (\BR^n)} \\
& \quad + \lVert u - v \rVert_{\H^{s, q} (\BR^n)} \sup_{\tau \in [0, 1]} \lVert u + \tau (v - u) \rVert_{\H^{s, q} (\BR^n)}^s \Big),
\end{align*}
where $C$ is some constant depending only on $f'', \dots, f^{([s] + 1)}$, $n$, $q$, and $s$.
\end{enumerate}		
\end{lemm}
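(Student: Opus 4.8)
The plan is to establish both assertions by localizing in frequency (Littlewood–Paley decomposition), handling the low-frequency block by the mean value theorem and the high-frequency blocks by a paraproduct-type / Leibniz argument, using the fact that $f(0)=0$ to control the full $\H^{s,q}$ norm rather than just the homogeneous seminorm. For assertion (1), I would first split $f(u) = \sum_{j\ge -1}\Delta_j f(u)$, where $\Delta_j$ are the usual Littlewood–Paley projectors. The term $j=-1$ (and, more generally, the $\L^q$-norm of $f(u)$ itself) is estimated directly: since $f(0)=0$ and $f$ is $\C^1$, $|f(u)(x)|\le \sup_{|\theta|\le |u(x)|}|f'(\theta)|\,|u(x)|$, so $\lVert f(u)\rVert_{\L^q}\le C(\lVert u\rVert_{\L^q}+\lVert u\rVert_{\L^q}^{\,?})$; the nonlinear power appears because $\sup|f'|$ over the range of $u$ may grow with $\lVert u\rVert_{\L^\infty}$, which in turn is controlled by $\lVert u\rVert_{\H^{s,q}}$ via the Sobolev embedding guaranteed by \eqref{cond-s} ($s>n/q$ if $q\le n$, $s\ge 1$ with $q>n$). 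For the high-frequency part one differentiates: writing $s=[s]+\{s\}$, the integer derivatives $\partial^\alpha f(u)$ with $|\alpha|=[s]$ are, by the Faà di Bruno formula, finite sums of terms $f^{(k)}(u)\prod_{i=1}^k \partial^{\beta_i}u$ with $\sum|\beta_i|=[s]$, $1\le k\le [s]$; each such product is estimated in $\H^{\{s\},q}$ (or in $\L^q$, after a fractional Leibniz / Gagliardo–Nirenberg interpolation) by distributing derivatives so that at most one factor carries the top regularity and the remaining factors are placed in $\L^\infty$ via $\H^{s,q}\hookrightarrow \L^\infty$. Collecting the factor $f^{(k)}(u)$ in $\L^\infty$ (bounded by $\sup_{|\theta|\le\lVert u\rVert_\infty}|f^{(k)}(\theta)|$, hence by a constant depending on $f',\dots,f^{([s])}$ once $\lVert u\rVert_{\H^{s,q}}$ is, say, $\le 1$, and otherwise by a power of $\lVert u\rVert_{\H^{s,q}}$), and counting that the $k$ factors $\partial^{\beta_i}u$ contribute $k\le[s]\le s$ powers of $\lVert u\rVert_{\H^{s,q}}$, one obtains the bound $C(\lVert u\rVert_{\H^{s,q}}+\lVert u\rVert_{\H^{s,q}}^{\,s})$.

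For assertion (2), I would exploit the extra hypothesis $f'(0)=0$ together with the identity
\begin{align*}
f(u)-f(v) = \int_0^1 f'\!\big(v+\tau(u-v)\big)\,\mathrm{d}\tau \; (u-v),
\end{align*}
and then apply the product rule in $\H^{s,q}$ (which is a Banach algebra under the stated condition on $s,q$, again via $\H^{s,q}\hookrightarrow\L^\infty$):
\begin{align*}
\lVert f(u)-f(v)\rVert_{\H^{s,q}} \le C\,\lVert u-v\rVert_{\H^{s,q}}\;\sup_{\tau\in[0,1]}\Big\lVert f'\big(v+\tau(u-v)\big)\Big\rVert_{\H^{s,q}}.
\end{align*}
Now $g:=f'$ satisfies $g(0)=0$, so assertion (1) applied to $g$ (which is $\C^{[s]+1-1}=\C^{[s]}$ at least, but since the error constant is allowed to depend on $f'',\dots,f^{([s]+1)}$ this is exactly $g',\dots,g^{([s])}=f'',\dots,f^{([s]+1)}$) gives
\begin{align*}
\Big\lVert f'\big(v+\tau(u-v)\big)\Big\rVert_{\H^{s,q}} \le C\Big(\lVert v+\tau(u-v)\rVert_{\H^{s,q}} + \lVert v+\tau(u-v)\rVert_{\H^{s,q}}^{\,s}\Big),
\end{align*}
and combining the two displays yields the claimed estimate after taking the supremum over $\tau$ inside (note $v+\tau(u-v)=u+(1-\tau)(v-u)$, matching the paper's normalization up to reparametrizing $\tau\mapsto 1-\tau$). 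The two summands in the statement correspond precisely to the linear and $s$-th power terms coming from applying (1) to $f'$.

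The main obstacle I expect is the careful bookkeeping in the fractional-order case when $s\notin\BN$: one must legitimately reduce the $\H^{s,q}$-estimate of a product $\prod\partial^{\beta_i}u$ to an $\H^{\{s\},q}$-estimate and then invoke a Kato–Ponce / fractional-Leibniz inequality together with Gagliardo–Nirenberg interpolation to place all but one factor in $\L^\infty$, taking care that the total number of $u$-factors is bounded by $s$ (and not, e.g., by $[s]+1$) so that the power in the final estimate is exactly $\lVert u\rVert_{\H^{s,q}}^{\,s}$, as stated. A second delicate point is uniformity of the constant: since $f$ is only assumed smooth on the interval $I$ (not on all of $\BR$), one needs $\lVert u\rVert_{\L^\infty}$ to stay inside $I$, which is ensured for small data by $\H^{s,q}\hookrightarrow\L^\infty$ and the smallness that will be imposed in \eqref{253}; the constant $C$ then depends on $\sup_{I}|f^{(k)}|$ for the relevant $k$, consistent with the statement. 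The case distinction $q\le n$ versus $q>n$ in \eqref{cond-s} enters only through which Sobolev embedding one cites for $\H^{s,q}\hookrightarrow\L^\infty$ and for the algebra property, and is otherwise transparent to the argument. For the detailed statements of the fractional Leibniz and composition estimates one may refer to standard references on Besov/Bessel-potential spaces (e.g.\ Bahouri \textit{et al.}~\cite{BCD} or Pr\"uss and Simonett~\cite{PS16}), adapted to the inhomogeneous scale via the $f(0)=0$ reduction.
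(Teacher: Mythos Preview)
Your plan for part~(2) coincides with the paper's: the identity $f(u)-f(v)=(u-v)\int_0^1 f'(v+\tau(u-v))\,\mathrm d\tau$ together with the algebra property of $\H^{s,q}$ and part~(1) applied to $g=f'$ is exactly what the paper does.

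For part~(1), however, the paper takes a much shorter route than your Littlewood--Paley/Fa\`a di Bruno/fractional-Leibniz scheme. After extending $f$ by zero to all of $\BR$, the paper simply invokes the composition estimate of Adams and Frazier~\cite{AF92}, which directly yields
\[
\lVert f(u)\rVert_{\H^{s,q}(\BR^n)}\le C\max_{1\le k\le[s]}\sup_{\theta}\lvert f^{(k)}(\theta)\rvert\,\Bigl(\lVert u\rVert_{\H^{s,q}(\BR^n)}+\lVert u\rVert_{\dot\H^{1,sq}(\BR^n)}^{\,s}\Bigr),
\]
and then closes with the Sobolev embedding $\H^{s,q}(\BR^n)\hookrightarrow\H^{1,sq}(\BR^n)$, valid under~\eqref{cond-s} since both cases force $s\ge1$ and $sq\ge n$. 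Thus the precise exponent $s$ in $\lVert u\rVert^s$---which you correctly flag as the delicate bookkeeping point of a direct paraproduct argument---comes for free from the cited result rather than from tracking powers through Fa\`a di Bruno and Gagliardo--Nirenberg. Your route is in principle sound and has the merit of being self-contained, but it is considerably longer and must negotiate exactly the issues you anticipate (fractional Leibniz on the Fa\`a di Bruno products, handling $f^{(k)}(u)$ at fractional regularity without circularity); the paper sidesteps all of this by deferring to a ready-made theorem.
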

\begin{rema}
When $0 < s \le 1$, it holds
\begin{align*}
\lVert f (u) \rVert_{\H^{s, q} (\BR^n)} \le C K \lVert u \rVert_{\H^{s, q} (\BR^n)}
\end{align*}
with a positive constant $C$, where $1 < q < \infty$, $f (0) = 0$, and $\lvert f' \rvert \le K$. One can find the proof in Christ and Weinstein~\cite{CW91} and Taylor~\cite[Chapter 2]{T00}.
\end{rema}
\begin{proof}[Proof of Lemma~\ref{lem-2.5.2}]
We first extend the domain of $f$ to $\BR$ by zero. According to Adams and Frazier~\cite{AF92}, we have the estimate
\begin{align*}
\lVert f(u) \rVert_{\H^{s, q} (\BR^n)} & \le C \max_{k \in \{1, \dots, [s]\} } \sup_{\theta \in \BR} \left\lvert f^{(k)} (\theta) \right\rvert \Big(\lVert u \rVert_{\H^{s, q} (\BR^n)} + \lVert u \rVert^s_{\dot \H^{1, s q} (\BR^n)} \Big) \\
& \le C \max_{k \in \{1, \dots, [s]\} } \sup_{\theta \in \BR} \left\lvert f^{(k)} (\theta) \right\rvert \Big(\lVert u \rVert_{\H^{s, q} (\BR^n)} + \lVert u \rVert_{\H^{s, q} (\BR^n)}^s \Big),
\end{align*}
where we have employed the embedding $\H^{s, q} (\BR^n) \hookrightarrow \H^{1, s q} (\BR^n)$. Notice that this embedding holds for any $q \in (1, \infty)$ whenever $s \ge 1$. Here, the constant $C$ is independent of $f$ and $u$. In addition, we know the identity
\begin{align*}
f(u) - f (v) = (v - u) \int_0^t f'(u + \tau (v - u)) \,\mathrm{d}\tau,
\end{align*}
which concludes the proof.
\end{proof}
We also need the estimates for composition operators with the Besov norm. The proof of the following lemma can be found in the book of Bahouri \textit{et al}.~\cite{BCD}, see Theorem 2.87 and Corollary~2.91.
\begin{lemm}
\label{lem-2.5.4}
Let $1 \le p, q \le \infty$, $s > 0$, and $0 \in I \subset \BR$. Furthermore, let $f \colon I \to \BR$ be a smooth (at least $\C^{[s] + 1}$-class) function such that $f (0) = 0$. The following statements are valid.
\begin{enumerate}\renewcommand{\labelenumi}{(\arabic{enumi})}
\item If $u \in \B^s_{q, p} (\BR^n) \cap \L^\infty (\BR^n)$, then we have $f (u) \in \B^s_{q, p} (\BR^n) \cap \L^\infty (\BR^n)$ bounded by
\begin{align*}
\lVert f (u) \rVert_{\B^s_{q, p} (\BR^n)} \le C \lVert u \rVert_{\B^s_{q, p} (\BR^n)}
\end{align*}
with some positive constant $C$ depending only on $f'$, $s$, and $\lVert u \rVert_{\L^\infty (\BR^n)}$.
\item If, furthermore, $f$ satisfies $f'(0) = 0$, then for any $u, v \in \B^s_{q, p} (\BR^n) \cap \L^\infty (\BR^n)$, the difference $f (u) - f (v)$ also belongs to $\B^s_{q, p} (\BR^n) \cap \L^\infty (\BR^n)$ with the estimate:
\begin{align*}
\lVert f (u) - f(v) \rVert_{\H^{s, q} (\BR^n)} & \le C \Big(\lVert u - v \rVert_{\B^s_{q, p} (\BR^n)} \sup_{\tau \in [0, 1]} \lVert u + \tau (v - u) \rVert_{\L^\infty (\BR^n)} \\
& \quad + \lVert u - v \rVert_{\L^\infty (\BR^n)} \sup_{\tau \in [0, 1]} \lVert u + \tau (v - u) \rVert_{\B^s_{q, p} (\BR^n)} \Big),
\end{align*}
where $C$ is some constant depending on $f''$, $\lVert u \rVert_{\L^\infty (\BR^n)}$, and $\lVert v \rVert_{\L^\infty (\BR^n)}$.	
\end{enumerate}	
\end{lemm}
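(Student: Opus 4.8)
The plan is to prove both parts of Lemma~\ref{lem-2.5.4} from the Littlewood--Paley characterisation of the Besov norm, $\lVert g \rVert_{\B^s_{q, p} (\BR^n)} \sim \lVert (2^{js} \lVert \Delta_j g \rVert_{\L^q (\BR^n)})_{j \ge -1} \rVert_{\ell^p}$, where $\Delta_j$ is the $j$-th dyadic block and $S_j := \sum_{j' < j} \Delta_{j'}$ the low-frequency truncation (with $S_{-1} = 0$). For assertion (1), since $f(0) = 0$ and $S_j \to \mathrm{Id}$, one telescopes $f(u) = \sum_{j \ge -1} \bigl( f(S_{j+1} u) - f(S_j u) \bigr)$ (with convergence in the sense of tempered distributions), and the mean value theorem factors each term as $f(S_{j+1} u) - f(S_j u) = (\Delta_j u)\, g_j$ with $g_j := \int_0^1 f'\bigl( S_j u + \tau \Delta_j u \bigr)\, \mathrm{d}\tau$. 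Because $S_j u + \tau \Delta_j u$ is a convex combination of $S_j u$ and $S_{j+1} u$, its values remain in a fixed ball of radius $\lesssim \lVert u \rVert_{\L^\infty (\BR^n)}$, so $\lVert g_j \rVert_{\L^\infty (\BR^n)} \le C$ with $C$ depending only on $f'$ and $\lVert u \rVert_{\L^\infty (\BR^n)}$.

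The core of the argument is then to estimate $\lVert \Delta_k ( (\Delta_j u) g_j ) \rVert_{\L^q (\BR^n)}$ and to resum. For $k \le j + N_0$, $N_0$ a fixed integer, H\"older's inequality gives the bound $C \lVert \Delta_j u \rVert_{\L^q (\BR^n)} \lVert g_j \rVert_{\L^\infty (\BR^n)} \le C c_j 2^{-js} \lVert u \rVert_{\B^s_{q, p} (\BR^n)}$ with $(c_j)_j \in \ell^p$ of unit norm. For $k > j + N_0$ the product is genuinely of high frequency, which must be supplied by $g_j$; here one exploits the smoothing of $g_j$: differentiating $g_j$ brings down factors $\nabla(S_j u + \tau \Delta_j u)$, and the Bernstein inequality (Lemma~\ref{lem-2.2}) gives $\lVert \nabla^\ell S_j u \rVert_{\L^\infty (\BR^n)} \lesssim 2^{j\ell} \lVert u \rVert_{\L^\infty (\BR^n)}$, hence $\lVert \Delta_k g_j \rVert_{\L^\infty (\BR^n)} \lesssim 2^{(j - k)\ell}$ up to the order $\ell$ of smoothness available from $f$, so that $\lVert \Delta_k ( (\Delta_j u) g_j ) \rVert_{\L^q (\BR^n)} \lesssim 2^{(j - k)\ell} c_j 2^{-js} \lVert u \rVert_{\B^s_{q, p} (\BR^n)}$ with $\ell = [s] + 1 > s$, which is where the hypothesis $f \in \C^{[s] + 1}$ enters. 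Combining the two regimes, $2^{ks} \lVert \Delta_k f(u) \rVert_{\L^q (\BR^n)} \lesssim \lVert u \rVert_{\B^s_{q, p} (\BR^n)} \, (c \ast \phi)(k)$ with $\phi(m) = 2^{ms} \min(1, 2^{-m\ell})$, which lies in $\ell^1$ precisely because $0 < s < \ell$; Young's convolution inequality on $\ell^p$ then closes the estimate. The $\L^\infty$ bound $\lVert f(u) \rVert_{\L^\infty (\BR^n)} \le C \lVert u \rVert_{\L^\infty (\BR^n)}$ is immediate from $f(0) = 0$ and the mean value theorem.

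For assertion (2), write $f(u) - f(v) = (u - v) \tilde g$ with $\tilde g := \int_0^1 f'\bigl( u + \tau(v - u) \bigr)\, \mathrm{d}\tau$, and apply the standard tame product estimate $\lVert a b \rVert_{\B^s_{q, p} (\BR^n)} \le C \bigl( \lVert a \rVert_{\B^s_{q, p} (\BR^n)} \lVert b \rVert_{\L^\infty (\BR^n)} + \lVert a \rVert_{\L^\infty (\BR^n)} \lVert b \rVert_{\B^s_{q, p} (\BR^n)} \bigr)$ --- valid precisely for $s > 0$ --- with $a = u - v$ and $b = \tilde g$. Since $f'(0) = 0$, the mean value theorem gives $\lvert f'(w) \rvert \le C \lvert w \rvert$ on the relevant bounded range, so $\lVert \tilde g \rVert_{\L^\infty (\BR^n)} \le C \sup_{\tau \in [0, 1]} \lVert u + \tau(v - u) \rVert_{\L^\infty (\BR^n)}$; and applying assertion (1) to $f'$ (which again vanishes at the origin) gives $\lVert \tilde g \rVert_{\B^s_{q, p} (\BR^n)} \le C \sup_{\tau \in [0, 1]} \lVert u + \tau(v - u) \rVert_{\B^s_{q, p} (\BR^n)}$. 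Substituting these two bounds into the product estimate reproduces exactly the stated inequality.

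The main obstacle is the high-frequency regime in (1): one must show that each dyadic difference $f(S_{j+1} u) - f(S_j u)$ behaves like a function essentially localised near frequency $2^j$, and that the decay gained off that scale is exactly $[s] + 1$ powers of $2^{j - k}$ --- enough, but only just, to be summable against the $2^{ks}$ weight --- which is what pins the required regularity of $f$ to $\C^{[s] + 1}$; the careful execution of this step is Theorem~2.87 and Corollary~2.91 of \cite{BCD}. A secondary point is that invoking (1) for $f'$ inside the proof of (2) nominally consumes one further derivative; this is harmless in the present application, since in Theorem~\ref{Th-2.1.1} the pressure, hence $f$, is smooth, but to keep the sharp hypothesis for the difference bound one can instead run the telescoping of (1) directly on the double difference $f(S_{j+1} u) - f(S_{j+1} v) - f(S_j u) + f(S_j v)$.
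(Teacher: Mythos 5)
The paper does not prove this lemma at all: it simply points to Theorem~2.87 and Corollary~2.91 of Bahouri--Chemin--Danchin \cite{BCD}, and what you have written is precisely the standard proof of those results (Meyer's first linearization theorem: telescoping $f(u)=\sum_j\bigl(f(S_{j+1}u)-f(S_ju)\bigr)$, the mean--value factorization through $\Delta_j u$, Bernstein estimates for the off-diagonal blocks, and the tame product law plus assertion~(1) applied to $f'$ for the difference estimate). So your route coincides with the cited source, and the overall architecture is correct.

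One substantive point deserves care: your derivative count does not quite deliver the hypothesis $f\in\C^{[s]+1}$ as stated. In the high-frequency regime you need decay $2^{(j-k)\ell}$ with $\ell>s$, hence $\ell=[s]+1$; but producing $\ell$ derivatives of $g_j=\int_0^1 f'(S_ju+\tau\Delta_ju)\,\mathrm{d}\tau$ consumes $f^{(\ell+1)}=f^{([s]+2)}$, one more derivative than claimed (and assertion~(2), run by applying~(1) to $f'$, costs yet another, as you note). This looseness is inherited from the paper itself --- \cite{BCD} assumes $f$ smooth, and reaching the sharp $\C^{[s]+1}$ threshold requires the finer paralinearization results of Bourdaud--Moussai--Sickel rather than the naive Leibniz--Bernstein count --- and it is harmless in the application, where the pressure is smooth. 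But as written, your sketch proves the lemma for $f\in\C^{[s]+2}$, not $\C^{[s]+1}$; either strengthen the hypothesis in your write-up or note explicitly that the sharp regularity is taken from the literature. (Incidentally, the $\H^{s,q}$ norm on the left of the displayed inequality in assertion~(2) is a typo in the paper for $\B^s_{q,p}$; your proof correctly targets the Besov norm.)
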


To estimate the nonlinear terms, we will use the \textit{Kato-Ponce inequality}~\cite{KP88}, see also Sawano~\cite[Theorem 4.44]{Saw18}.
\begin{lemm}
\label{lem-bilinear}
Let $1 < q < \infty$ and $s > 0$. For $f, g \in \H^{s, q} (\BR^n) \cap \L^\infty (\BR^n)$, we have
\begin{align*}
\lVert f g \rVert_{\H^{s, q} (\BR^n)} \le C \Big(\lVert f \rVert_{\H^{s, q} (\BR^n)} \lVert g \rVert_{\L^\infty (\BR^n)} + \lVert f \rVert_{\L^\infty (\BR^n)} \lVert g \rVert_{\H^{s, q} (\BR^n)}\Big)
\end{align*}
with a positive constant $C$.
\end{lemm}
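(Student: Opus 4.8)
The plan is to prove this fractional Leibniz rule by a Littlewood--Paley decomposition combined with Bony's paraproduct calculus, in the spirit of the references cited above. Write $\Delta_j$ for the inhomogeneous dyadic frequency projection onto $\{\lvert \xi \rvert \sim 2^j\}$ (together with the low--frequency block), $S_j = \sum_{k \le j - 1} \Delta_k$, and $\widetilde{\Delta}_j = \Delta_{j - 1} + \Delta_j + \Delta_{j + 1}$, and recall the square--function characterization $\lVert h \rVert_{\H^{s, q} (\BR^n)} \sim \lVert ( \sum_j 2^{2 j s} \lvert \Delta_j h \rvert^2 )^{1/2} \rVert_{\L^q (\BR^n)}$, valid for every $1 < q < \infty$ and every $s \in \BR$. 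Using Bony's decomposition I would split the product as $f g = T_f g + T_g f + R (f, g)$, where $T_f g = \sum_j S_{j - 1} f \, \Delta_j g$ is the paraproduct and $R (f, g) = \sum_j \Delta_j f \, \widetilde{\Delta}_j g$ is the remainder. By the symmetry between $f$ and $g$ it then suffices to estimate $T_f g$ and $R (f, g)$.

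For the paraproduct, each summand $S_{j - 1} f \, \Delta_j g$ has Fourier support in a dyadic annulus of size $\sim 2^j$ (the shift in $S_{j-1}$ is exactly what keeps the product spectrum away from the origin), so $\Delta_k ( S_{j - 1} f \, \Delta_j g )$ vanishes unless $\lvert k - j \rvert \le N_0$ for a fixed $N_0$, whence $\Delta_k T_f g$ is a finite sum of such terms. Since $\lvert S_{j - 1} f \rvert \le C \lVert f \rVert_{\L^\infty (\BR^n)}$ pointwise (the kernel defining $S_{j - 1}$ has uniformly bounded $\L^1$ norm), bounding $\Delta_k$ of a product by $\lVert f \rVert_{\L^\infty (\BR^n)}$ times the Hardy--Littlewood maximal function of $\Delta_j g$ and then invoking the Fefferman--Stein vector--valued maximal inequality (this is where $1 < q < \infty$ enters) yields $\lVert T_f g \rVert_{\H^{s, q} (\BR^n)} \le C \lVert f \rVert_{\L^\infty (\BR^n)} \lVert ( \sum_j 2^{2 j s} \lvert \Delta_j g \rvert^2 )^{1/2} \rVert_{\L^q (\BR^n)} \le C \lVert f \rVert_{\L^\infty (\BR^n)} \lVert g \rVert_{\H^{s, q} (\BR^n)}$, and the symmetric computation gives $\lVert T_g f \rVert_{\H^{s, q} (\BR^n)} \le C \lVert g \rVert_{\L^\infty (\BR^n)} \lVert f \rVert_{\H^{s, q} (\BR^n)}$.

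The remainder is the only delicate term, because $\Delta_j f \, \widetilde{\Delta}_j g$ is localized only in a ball $\{ \lvert \xi \rvert \lesssim 2^j \}$, not in an annulus, so $\Delta_k ( \Delta_j f \, \widetilde{\Delta}_j g )$ can be nonzero whenever $j \ge k - N$. Writing $2^{k s} \Delta_k R (f, g) = \sum_{j \ge k - N} 2^{(k - j) s} \Delta_k ( 2^{j s} \Delta_j f \cdot \widetilde{\Delta}_j g )$, the hypothesis $s > 0$ makes the weights $2^{(k - j) s}$ summable over the range $k - j \le N$; combining Minkowski's inequality, the Fefferman--Stein maximal estimate, and the pointwise bound $\lVert \widetilde{\Delta}_j g \rVert_{\L^\infty (\BR^n)} \le C \lVert g \rVert_{\L^\infty (\BR^n)}$ applied to one factor while keeping the square function of $2^{j s} \Delta_j f$ on the other, one arrives at $\lVert R (f, g) \rVert_{\H^{s, q} (\BR^n)} \le C \lVert f \rVert_{\H^{s, q} (\BR^n)} \lVert g \rVert_{\L^\infty (\BR^n)}$. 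Adding the three contributions proves the lemma. I expect this remainder estimate to be the main obstacle: the dyadic summation closes only because $s > 0$ (it genuinely fails at $s = 0$), and routing the $\L^q$ bound through the vector--valued maximal inequality rather than through a naive pointwise estimate is precisely what makes the argument work for all $q \in (1, \infty)$. A careful treatment of all three pieces can be found in Kato and Ponce~\cite{KP88} and Sawano~\cite[Theorem~4.44]{Saw18}, with the underlying paraproduct machinery in Bahouri \textit{et al.}~\cite{BCD}.
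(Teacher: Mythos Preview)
Your proof sketch is correct and follows the standard route to the Kato--Ponce inequality via Bony's paraproduct decomposition and the square--function characterization of $\H^{s,q}$. The paper itself does not prove this lemma at all: it simply states the inequality and cites Kato--Ponce~\cite{KP88} and Sawano~\cite[Theorem~4.44]{Saw18} as references. Your argument is precisely the one carried out in those sources (and in Bahouri \textit{et al.}~\cite{BCD}), so you have supplied what the paper deliberately omits rather than taken a different approach.
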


\begin{proof}[Proof of Lemma \ref{lem-2.5.1}]
To show \eqref{254}, it suffices to prove the bounds
\begin{align}
\label{256}
\bigg\lVert \int_0^t T(t - \tau) \bF (\tau) \,\mathrm{d}\tau \bigg\rVert_{\L^p (\BR_+; \H^{s + 1, q} (\BR^n))} \le C_1 \lVert \bU_0 \rVert_{\H^{s + 1, q} (\BR^n) \times \H^{s, q} (\BR^n)}^2, \\
\label{257}
\bigg\lVert \int_0^t T(t - \tau) \bF (\tau) \,\mathrm{d}\tau \bigg\rVert_{\W^{1, p} (\BR_+; \H^{s - 1, q} (\BR^n))} \le C_1 \lVert \bU_0 \rVert_{\H^{s + 1, q} (\BR^n) \times \H^{s, q} (\BR^n)}^2.
\end{align}
Using the embedding property \eqref{emb-MR} and the assumption \eqref{253}, for $\bU \in X_{p, q, s}$ we have the bound
\begin{equation}
\label{258}
\begin{split}
\sup_{t \in \BR_+} \lVert \pi \rVert_{\H^{s, q} (\BR^n)} & \le C \sup_{t \in \BR_+} \lVert \pi \rVert_{\B^{s + 2 - 2/p}_{q, p} (\BR^n)} \\
& \le C \Big(\lVert \pi \rVert_{\W^{1, p} (\BR_+; \H^{s, q} (\BR^n))} + \lVert \pi \rVert_{\L^p (\BR_+; \H^{s + 2, q} (\BR^n))} \Big) \\
& \le C C_0 \lVert \bU_0 \rVert_{\H^{s + 1, q} (\BR^n) \times \H^{s, q} (\BR^n)} \\
& \le \frac{C}{2} < \infty,
\end{split}
\end{equation}
which, combined with Lemma \ref{lem-2.5.2}, yields that
\begin{align*}
\lVert G (\pi) \rVert_{\H^{s, q} (\BR^n)} \le C \left(\lVert \pi \rVert_{\H^{s, q} (\BR^n)} + \lVert \pi \rVert_{\H^{s, q} (\BR^n)}^s \right).
\end{align*}
Besides, employing Lemma \ref{lem-2.5.4}, we observe
\begin{align*}
\lVert G (\pi) \rVert_{\B^{s + 2 - 2/p}_{q, p} (\BR^n)} \le C \lVert \pi \rVert_{\B^{s + 2 - 2/p}_{q, p} (\BR^n)},
\end{align*}
where the constant may depend on $C_0$. Using Lemma~\ref{lem-bilinear}, we obtain
\begin{align*}
\lVert G (\pi) \nabla \pi \rVert_{\H^{s - 1, q} (\BR^n)} & \le C \Big(\lVert G (\pi) \rVert_{\H^{s - 1, q} (\BR^n)} \lVert \nabla \pi \rVert_{\L^\infty (\BR^n)} + \lVert G (\pi) \rVert_{\L^\infty (\BR^n)} \lVert \nabla \pi \rVert_{\H^{s - 1, q} (\BR^n)}\Big).
\end{align*}
Then, from Lemma \ref{lem-2.4.3}, we have the estimate
{\allowdisplaybreaks
\begin{align*}
& \bigg\lVert \int_0^t T (t - \tau) \Big(G (\pi) \nabla \pi \Big) \,\mathrm{d}\tau \bigg\rVert_{\L^p (\BR_+; \H^{s + 1, q} (\BR^n))} \\
& \quad \le C \lVert G (\pi) \nabla \pi \rVert_{\L^p (\BR_+; \H^{s - 1, q} (\BR^n))} \\
& \quad \le C \Big(\lVert G (\pi) \rVert_{\L^p(\BR_+; \H^{s - 1, q} (\BR^n))} \lVert \nabla \pi \rVert_{\L^\infty (\BR_+; \L^\infty (\BR^n))} \\
& \quad \quad + \lVert G (\pi) \rVert_{\L^\infty (\BR_+; \L^\infty (\BR^n))} \lVert \nabla \pi \rVert_{\L^p (\BR_+; \H^{s - 1, q} (\BR^n))}\Big) \\
& \quad \le C \Big(\lVert G(\pi) \rVert_{\L^p(\BR_+; \H^{s - 1, q} (\BR^n))} \lVert \nabla \pi \rVert_{\L^\infty (\BR_+; \B^{s + 1 - 2/p}_{q, p} (\BR^n))} \\
& \quad \quad + \lVert G (\pi) \rVert_{\L^\infty (\BR_+; \B^{s + 2 - 2/p}_{q, p} (\BR^n))} \lVert \nabla \pi \rVert_{\L^p (\BR_+; \H^{s - 1, q} (\BR^n))}\Big) \\
& \quad \le C \Big\{\Big(\lVert \pi \rVert_{\L^p(\BR_+; \H^{s, q} (\BR^n))} + \lVert \pi \rVert^s_{\L^p (\BR_+; \H^{s, q} (\BR^n))}\Big) \\
& \quad \quad \quad \cdot \Big(\lVert \pi \rVert_{\W^{1, p} (\BR_+; \H^{s, q} (\BR^n))} + \lVert \pi \rVert_{\L^p (\BR_+; \H^{s + 2, q} (\BR^n))} \Big) \\
& \quad \quad + \Big(\lVert \pi \rVert_{\L^p(\BR_+; \H^{s, q} (\BR^n))} + \lVert \pi \rVert^s_{\L^p (\BR_+; \H^{s, q} (\BR^n))}\Big) \lVert \pi \rVert_{\L^p (\BR_+; \H^{s, q} (\BR^n))}\Big\} \\
& \quad \le C \lVert \bU_0 \rVert_{\H^{s + 1, q} (\BR^n) \times \H^{s, q} (\BR^n)}^2
\end{align*}}\noindent
for $\bU \in X_{p, q, s}$, where we have used the embedding properties:
\begin{gather*}
\B^{s + 2}_{q, p} (\BR^n) \hookrightarrow \B^{s + 1}_{q, p} (\BR^n) \hookrightarrow \L^\infty (\BR^n), \\
\W^{1, p} (\BR_+; \H^{s, q} (\BR^n)) \cap \L^p (\BR_+; \H^{s + 2, q} (\BR^n)) \hookrightarrow \L^\infty (\BR_+; \B^{s + 2 - 2/p}_{q, p} (\BR^n))
\end{gather*}	
and the estimate
\begin{equation}
\label{est-pi}
\begin{split}
& \lVert \pi \rVert_{\L^p(\BR_+; \H^{s, q} (\BR^n))} + \lVert \pi \rVert^s_{\L^p (\BR_+; \H^{s, q} (\BR^n))} \\
& \le \Big(1 + \lVert \pi \rVert_{\L^p(\BR_+; \H^{s, q} (\BR^n))}^{s - 1}\Big) \lVert \pi \rVert_{\L^p(\BR_+; \H^{s, q} (\BR^n))} \\
& \le \Big(1 + \lVert \bU \rVert_{X_{p, q, s}}^{s - 1}\Big) \lVert \pi \rVert_{\L^p(\BR_+; \H^{s, q} (\BR^n))} \\
& \le \Big(1 + C_0^{s - 1} \lVert \bU_0 \rVert_{\H^{s + 1, q} (\BR^n) \times \H^{s, q} (\BR^n)}^{s - 1}\Big) \lVert \pi \rVert_{\L^p(\BR_+; \H^{s, q} (\BR^n))} \\
& \le \bigg(1 + \frac{1}{2^{s - 1}}\bigg) \lVert \pi \rVert_{\L^p(\BR_+; \H^{s, q} (\BR^n))}.
\end{split}
\end{equation}
Similarly, we also obtain
\begin{align*}
\bigg\lVert \int_0^t T (t - \tau) \Big(P''(1) \pi \nabla \pi \Big) \,\mathrm{d}\tau \bigg\rVert_{\L^p (\BR_+; \H^{s + 1, q} (\BR^n))} \le C \lVert \bU_0 \rVert_{\H^{s + 1, q} (\BR^n) \times \H^{s, q} (\BR^n)}^2.
\end{align*}
Since we see that
\begin{align*}
\left\lVert \frac{1}{1 + \pi} \right\rVert_{\L^p (\BR_+; \H^{s + 2, q} (\BR^n))} \le \frac{1}{1 - C_0 \lVert \bU_0 \rVert_{\H^{s + 1, q} (\BR^n) \times \H^{s, q} (\BR^n)}} \le 2, \\
\left\lVert \frac{1}{1 + \pi} \right\rVert_{\L^\infty (\BR_+; \L^\infty (\BR^n))} \le \frac{1}{1 - C_0 \lVert \bU_0 \rVert_{\H^{s + 1, q} (\BR^n) \times \H^{s, q} (\BR^n)}} \le 2
\end{align*}
hold for any $\bU \in X_{p, q, s}$ supposing \eqref{253}, the Sobolev embedding theorem and Lemma \ref{lem-2.4.3} implies	
{\allowdisplaybreaks \begin{align*}
& \bigg\lVert \int_0^t T (t - \tau) \dv \bigg(\frac{1}{1 + \pi (\tau)}\bmm (\tau)\otimes \bmm (\tau) \bigg) \,\mathrm{d}\tau \bigg\rVert_{\L^p (\BR_+; \H^{s + 1, q} (\BR^n))} \\
& \le C \left\lVert \frac{1}{1 + \pi} \bmm \otimes \bmm \right\rVert_{\L^p (\BR_+; \H^{s - 1, q} (\BR^n))} \\
& \le C \left(\left\lVert \frac{1}{1 + \pi} \right\rVert_{\L^p (\BR_+; \H^{s - 1, q} (\BR^n))} \lVert \bmm \rVert_{\L^\infty (\BR_+; \L^\infty (\BR^n))}^2 \right. \\
& \left. \quad + 2 \left\lVert \frac{1}{1 + \pi} \right\rVert_{\L^\infty (\BR_+; \L^\infty (\BR^n))} \lVert \bmm \rVert_{\L^p (\BR_+; \H^{s - 1, q} (\BR^n))} \lVert \bmm \rVert_{\L^\infty (\BR_+; \L^\infty (\BR^n))} \right) \\
& \le C \left(\left\lVert \frac{1}{1 + \pi} \right\rVert_{\L^p (\BR_+; \H^{s - 1, q} (\BR^n))} \lVert \bmm \rVert_{\L^\infty (\BR_+; \B^{s + 1, q}_{q, p} (\BR^n))}^2 \right. \\
& \left. \quad + 2 \left\lVert \frac{1}{1 + \pi} \right\rVert_{\L^\infty (\BR_+; \B^{s + 2 - 2/p}_{q, p} (\BR^n))} \lVert \bmm \rVert_{\L^p (\BR_+; \H^{s - 1, q} (\BR^n))} \lVert \bmm \rVert_{\L^\infty (\BR_+; \B^{s + 1 - 2/p}_{q, p} (\BR^n))} \right) \\
& \le C \left(\Big(\lVert \bmm \rVert_{\W^{1, p} (\BR_+; \H^{s - 1, q} (\BR^n))} + \lVert \bmm \rVert_{\L^p (\BR_+; \H^{s + 1, q} (\BR^n))} \Big)^2 \right. \\
&  \left. \quad + \lVert \bmm \rVert_{\L^p (\BR_+; \H^{s - 1, q} (\BR^n))} \Big(\lVert \bmm \rVert_{\W^{1, p} (\BR_+; \H^{s - 1, q} (\BR^n))} + \lVert \bmm \rVert_{\L^p (\BR_+; \H^{s + 1, q} (\BR^n))} \Big) \right) \\
& \le C C_0^2 \lVert \bU_0 \rVert_{\H^{s + 1, q} (\BR^n) \times \H^{s, q} (\BR^n)}^2.
\end{align*}}\noindent	
Using the same argument, we also arrive at
{\allowdisplaybreaks
\begin{align*}
& \bigg\lVert \int_0^t T (t - \tau) \Delta \bigg(\frac{\pi (\tau)}{1 + \pi (\tau)}\bmm (\tau) \bigg) \,\mathrm{d}\tau \bigg\rVert_{\L^p (\BR_+; \H^{s + 1, q} (\BR^n))} \\
& \qquad \le C \left\lVert \frac{\pi}{1 + \pi} \bmm \right\rVert_{\L^p (\BR_+; \H^{s + 1, q} (\BR^n))} \\
& \qquad \le C \left( \left\lVert \frac{1}{1 + \pi} \right\rVert_{\L^p (\BR_+; \H^{s + 1, q} (\BR^n))} \lVert \pi \rVert_{\L^\infty (\BR_+; \L^\infty (\BR^n))} \lVert \bmm \rVert_{\L^\infty (\BR_+; \L^\infty (\BR^n))} \right. \\
& \qquad \quad + \left\lVert \frac{1}{1 + \pi} \right\rVert_{\L^\infty (\BR_+; \L^\infty (\BR^n))} \lVert \pi \rVert_{\L^p (\BR_+; \H^{s + 1, q} (\BR^n))} \lVert \bmm \rVert_{\L^\infty (\BR_+; \L^\infty (\BR^n))} \\
& \qquad \quad \left. + \left\lVert \frac{1}{1 + \pi} \right\rVert_{\L^\infty (\BR_+; \L^\infty (\BR^n))} \lVert \pi \rVert_{\L^\infty (\BR_+; \L^\infty (\BR^n))} \lVert \bmm \rVert_{\L^p (\BR_+; \H^{s + 1, q} (\BR^n))} \right)\\
& \qquad \le C C_0^2 \lVert \bU_0 \rVert_{\H^{s + 1, q} (\BR^n) \times \H^{s, q} (\BR^n)}^2 \\
& \bigg\lVert \int_0^t T (t - \tau) \nu \nabla \dv \bigg(\frac{\pi (\tau)}{1 + \pi (\tau)} \bmm (\tau) \bigg)\,\mathrm{d}\tau \bigg\rVert_{\L^p (\BR_+; \H^{s + 1, q} (\BR^n))} \\
& \qquad \le C \left\lVert \frac{\pi}{1 + \pi} \bmm \right\rVert_{\L^p (\BR_+; \H^{s + 1, q} (\BR^n))} \\
& \qquad \le C \left( \left\lVert \frac{1}{1 + \pi} \right\rVert_{\L^p (\BR_+; \H^{s + 1, q} (\BR^n))} \lVert \pi \rVert_{\L^\infty (\BR_+; \L^\infty (\BR^n))} \lVert \bmm \rVert_{\L^\infty (\BR_+; \L^\infty (\BR^n))} \right. \\
& \qquad \quad + \left\lVert \frac{1}{1 + \pi} \right\rVert_{\L^\infty (\BR_+; \L^\infty (\BR^n))} \lVert \pi \rVert_{\L^p (\BR_+; \H^{s + 1, q} (\BR^n))} \lVert \bmm \rVert_{\L^\infty (\BR_+; \L^\infty (\BR^n))} \\
& \qquad \quad \left. + \left\lVert \frac{1}{1 + \pi} \right\rVert_{\L^\infty (\BR_+; \L^\infty (\BR^n))} \lVert \pi \rVert_{\L^\infty (\BR_+; \L^\infty (\BR^n))} \lVert \bmm \rVert_{\L^p (\BR_+; \H^{s + 1, q} (\BR^n))} \right)\\
& \qquad \le C C_0^2 \lVert \bU_0 \rVert_{\H^{s + 1, q} (\BR^n) \times \H^{s, q} (\BR^n)}^2 \\
& \bigg\lVert \int_0^t T (t - \tau) \kappa \dv (\pi \Delta \pi) \,\mathrm{d}\tau \bigg\rVert_{\L^p (\BR_+; \H^{s + 1, q} (\BR^n))} \\
& \qquad \le C \left\lVert \pi \Delta \pi \right\rVert_{\L^p (\BR_+; \H^{s, q} (\BR^n))} \\
& \qquad = C \bigg\lVert \sum_{j = 1}^n \frac{1}{2} \pd_j (\pd_j \pi^2) \bigg\rVert_{\L^p (\BR_+; \H^{s, q} (\BR^n))} \\
& \qquad \le C \left\lVert \pi^2 \right\rVert_{\L^p (\BR_+; \H^{s + 2, q} (\BR^n))} \\
& \qquad \le C \lVert \pi \rVert_{\L^p (\BR_+; \H^{s + 2, q} (\BR^n))} \lVert \pi \rVert_{\L^\infty (\BR_+; \L^\infty (\BR^n))} \\
& \qquad \le C C_0^2 \lVert \bU_0 \rVert_{\H^{s + 1, q} (\BR^n) \times \H^{s, q} (\BR^n)}^2 \\
& \bigg\lVert \int_0^t T (t - \tau) \kappa \dv \left\{\left(- \frac{\lvert \nabla \pi \rvert^2}{2}\right) \bI - \nabla \pi \otimes \nabla \pi \right\} \bigg\rVert_{\L^p (\BR_+; \H^{s + 1, q} (\BR^n))} \\
& \qquad \le C \lVert (\nabla \pi)^2 \rVert_{\L^p (\BR_+; \H^{s, q} (\BR^n))} \\
& \qquad \le C \lVert \nabla \pi \rVert_{\L^p (\BR_+; \H^{s, q} (\BR^n))} \lVert \nabla \pi \rVert_{\L^\infty (\BR_+; \L^\infty (\BR^n))} \\
& \qquad \le C C_0^2 \lVert \bU_0 \rVert_{\H^{s + 1, q} (\BR^n) \times \H^{s, q} (\BR^n)}^2.
\end{align*}}\noindent		
Combining the estimates above, we obtain \eqref{256}. Similarly, we have \eqref{257}, which implies \eqref{254}. \par
We now turn to prove \eqref{255}. To this end, it is enough to show the estimates
\begin{equation}
\label{259}
\begin{split}
\bigg\lVert \int_0^t T(t - \tau) (\bF_1 (\tau) - \bF_2 (\tau))\,\mathrm{d}\tau \bigg\rVert_{\L^p (\BR_+; \H^{s + 1, q} (\BR^n))} & \le C_2 \lVert \bU_1 - \bU_2 \rVert_{X_{p, q, s}}, \\
\bigg\lVert \int_0^t T(t - \tau) (\bF_1 (\tau) - \bF_2 (\tau))\,\mathrm{d}\tau \bigg\rVert_{\W^{1, p} (\BR_+; \H^{s - 1, q} (\BR^n))} & \le C_2 \lVert \bU_1 - \bU_2 \rVert_{X_{p, q, s}}
\end{split}
\end{equation}
hold for $\bU_1, \bU_2 \in X_{p, q, s}$, where we have set $\bF_1 (\tau) := \bF (\pi_1 (x, \tau), \bmm_1 (x, \tau))$ and $\bF_2 (\tau) := \bF (\pi_2 (x, \tau), \bmm_2 (x, \tau))$. Since we have $\pi_1, \pi_2 \in \H^{s, q} (\BR^n)$, see \eqref{258}, Lemmas~\ref{lem-2.5.2}, \ref{lem-2.5.4} and the condition \eqref{253} imply
{\allowdisplaybreaks
\begin{align*}
& \lVert G (\pi_1) - G (\pi_2) \rVert_{\H^{s, q} (\BR^n)} \\
&\quad \le C \lVert \pi_1 - \pi_2 \rVert_{\H^{s, q} (\BR^n)} \left(\lVert \pi_1 \rVert_{\H^{s, q} (\BR^n)} + \lVert \pi_2 \rVert_{\H^{s, q} (\BR^n)} \right) \\
& \quad \quad + C \lVert \pi_1 - \pi_2 \rVert_{\H^{s, q} (\BR^n)} \left(\lVert \pi_1 \rVert_{\H^{s, q} (\BR^n)} + \lVert \pi_2 \rVert_{\H^{s, q} (\BR^n)} \right)^s \\
& \quad \le C \lVert \pi_1 - \pi_2 \rVert_{\H^{s, q} (\BR^n)} \left(\lVert \pi_1 \rVert_{\L^\infty (\BR_+; \B^{s + 2 - 2/p}_{q, p} (\BR^n))} + \lVert \pi_2 \rVert_{\L^\infty (\BR_+; \B^{s + 2 - 2/p}_{q, p} (\BR^n))} \right) \\
& \quad \quad + C \lVert \pi_1 - \pi_2 \rVert_{\H^{s, q} (\BR^n)} \left(\lVert \pi_1 \rVert_{\L^\infty (\BR_+; \B^{s + 2 - 2/p}_{q, p} (\BR^n))} + \lVert \pi_2 \rVert_{\L^\infty (\BR_+; \B^{s + 2 - 2/p}_{q, p} (\BR^n))} \right)^s \\
& \quad \le C \lVert \pi_1 - \pi_2 \rVert_{\H^{s, q} (\BR^n)}, \\
& \lVert G (\pi_1) - G (\pi_2) \rVert_{\B^{s + 1 - 2/p}_{q, p} (\BR^n)} \\
& \quad \le C \lVert \pi_1 - \pi_2 \rVert_{\B^{s + 1 - 2/p}_{q, p} (\BR^n)} \left(\lVert \pi_1 \rVert_{\B^{s + 1 - 2/p}_{q, p} (\BR^n)} + \lVert \pi_2 \rVert_{\B^{s + 1 - 2/p}_{q, p} (\BR^n)} \right) \\
& \quad \le C \lVert \pi_1 - \pi_2 \rVert_{\B^{s + 1 - 2/p}_{q, p} (\BR^n)} \\
& \quad \quad \cdot \left(\lVert \pi_1 \rVert_{\L^\infty (\BR_+; \B^{s + 2 - 2/p}_{q, p} (\BR^n))} + \lVert \pi_2 \rVert_{\L^\infty (\BR_+; \B^{s + 2 - 2/p}_{q, p} (\BR^n))} \right) \\
& \quad \le C \lVert \pi_1 - \pi_2 \rVert_{\B^{s + 1 - 2/p}_{q, p} (\BR^n)}
\end{align*}	
}\noindent
because we have the embedding $\B^{s + 2 - 2/p}_{q, p} (\BR^n) \hookrightarrow \B^{s + 1 - 2/p}_{q, p} (\BR^n) \hookrightarrow \H^{s, q} (\BR^n) \hookrightarrow \L^\infty (\BR^n)$ and the estimate  similar to \eqref{est-pi}. Thus, we observe that
{\allowdisplaybreaks \begin{align*}
& \bigg\lVert \int_0^t T (t - \tau) \Big(G (\pi_1) \nabla \pi_1 - G (\pi_2) \nabla \pi_2 \Big) \,\mathrm{d}\tau \bigg\rVert_{\L^p (\BR_+; \H^{s + 1, q} (\BR^n))} \\
& \le C \lVert (G (\pi_1) - G (\pi_2)) \nabla \pi_1 + G (\pi_2) \nabla (\pi_1 - \pi_2) \rVert_{\L^p (\BR_+; \H^{s - 1, q} (\BR^n))} \\
& \le C \Big(\lVert G (\pi_1) - G (\pi_2) \rVert_{\L^p (\BR_+; \H^{s, q} (\BR^n))} \lVert \nabla \pi_1 \rVert_{\L^\infty (\BR_+; \L^\infty (\BR^n))} \\
& \quad + \lVert G (\pi_1) - G (\pi_2) \rVert_{\L^\infty (\BR_+; \L^\infty (\BR^n))} \lVert \nabla \pi_1 \rVert_{\L^p (\BR_+; \H^{s - 1, q} (\BR^n))} \\
& \quad + \lVert G (\pi_2) \rVert_{\L^p (\BR_+; \H^{s - 1, q} (\BR^n))} \lVert \nabla (\pi_1 - \pi_2) \rVert_{\L^\infty (\BR_+; \L^\infty (\BR^n))} \\
& \quad + \lVert G (\pi_2) \rVert_{\L^\infty (\BR_+; \L^\infty (\BR^n))} \lVert \nabla (\pi_1 - \pi_2) \rVert_{\L^p (\BR_+; \H^{s - 1, q} (\BR^n))} \Big) \\
& \le C \Big(\lVert \pi_1 - \pi_2 \rVert_{\L^p (\BR_+; \H^{s, q} (\BR^n))} \lVert \nabla \pi_1 \rVert_{\L^\infty (\BR_+; \B^{s + 1 - 2/p}_{q, p} (\BR^n))} \\
& \quad + \lVert \pi_1 - \pi_2 \rVert_{\L^\infty (\BR_+; \B^{s + 1 - 2/p}_{q, p} (\BR^n))} \lVert \nabla \pi_1 \rVert_{\L^p (\BR_+; \H^{s - 1, q} (\BR^n))} \\
& \quad + \lVert \pi_2 \rVert_{\L^p (\BR_+; \H^{s - 1, q} (\BR^n))} \lVert \nabla (\pi_1 - \pi_2) \rVert_{\L^\infty (\BR_+; \B^{s + 1 - 2/p}_{q, p} (\BR^n))} \\
& \quad + \lVert \pi_2 \rVert_{\L^\infty (\BR_+; \B^{s + 2 - 2/p}_{q, p} (\BR^n))} \lVert \nabla (\pi_1 - \pi_2) \rVert_{\L^p (\BR_+; \H^{s - 1, q} (\BR^n))} \Big) \\
& \le C \lVert \bU_0 \rVert_{\H^{s + 1, q} (\BR^n) \times \H^{s, q} (\BR^n)} \lVert \bU_1 - \bU_2 \rVert_{X_{p, q, s}}.
\end{align*}}\noindent		
The rest terms can be shown by employing the similar arguments above, which yields \eqref{259}.
\end{proof}

\begin{proof}[Proof of Theorem \ref{Th-2.1.1}]
Define the map $\Phi$ by
\begin{align*}
\Phi (\bU) (t) = T(t) \bU_0 - \int_0^t T(t - \tau) \bN (\tau) \,\mathrm{d}\tau.
\end{align*}
According to the estimate \eqref{252} and Lemma \ref{lem-2.5.1}, for all $\bU$, we see that
\begin{align*}
\lVert \Phi (\bU) \rVert_{X_{p, q, s}} & \le C_0 \lVert \bU_0 \rVert_{\H^{s + 1, q} (\BR^n) \times \H^{s, q} (\BR^n)} + C_1 \lVert \bU_0 \rVert_{\H^{s + 1, q} (\BR^n) \times \H^{s, q} (\BR^n)}^2 \\
& \le (C_0 + C_1 \lVert \bU_0 \rVert_{\H^{s + 1, q} (\BR^n) \times \H^{s, q} (\BR^n)}) \lVert \bU_0 \rVert_{\H^{s + 1, q} (\BR^n) \times \H^{s, q} (\BR^n)}
\end{align*}
On the other hand, we observe that
\begin{align*}
\lVert \Phi (\bU_1) - \Phi (\bU_2) \rVert_{X_{p, q, s}} & \le \bigg\lVert \int_0^t T(t - \tau) (\bN_1 (\tau) - \bN_2 (\tau))\,\mathrm{d}\tau \bigg\rVert_{X_{p, q, s}} \\
& \le C_2 C_0 \lVert \bU_0 \rVert_{\H^{s + 1, q} (\BR^n) \times \H^{s, q} (\BR^n)} \lVert \bU_1 - \bU_2 \rVert_{X_{p, q, s}}
\end{align*}
for all $\bU_1, \bU_2 \in X_{p, q, s}$. Taking the initial data $\bU_0 \in \H^{s + 1, q} (\BR^n) \times \H^{s, q} (\BR^n)^n$ such that
\begin{align*}
\lVert \bU_0 \rVert_{\H^{s + 1, q} (\BR^n) \times \H^{s, q} (\BR^n)} \le \min \bigg(\frac{1}{2 C_0}, \frac{1}{C_0 C_1}, \frac{1}{2 C_0 C_2}\bigg)
\end{align*}
we deduce that
\begin{align*}
\lVert \Phi (\bU) \rVert_{X_{p, q, s}} & \le 2 C_0 \lVert \bU_0 \rVert_{\H^{s + 1, q} (\BR^n) \times \H^{s, q} (\BR^n)}, \\
\lVert \Phi (\bU_1) - \Phi (\bU_2) \rVert_{X_{p, q, s}} & \le \frac{1}{2} \lVert \bU_1 - \bU_2 \rVert_{X_{p, q, s}}
\end{align*}
assuming that $\bU, \bU_1, \bU_2 \in X_{p, q, s}$. Namely, $\Phi$ is the contraction mapping on $X_{p,q, s}$, so that there exists a unique solution $\bU \in X_{p, q, s}$ satisfying \eqref{2.1.1} with $\varrho = 1 + \pi$. This concludes the proof.
\end{proof}

\appendix
\section{Derivation of the Green matrix}
\noindent
We here give a derivation of the Green matrix. According to \eqref{eq-223}, we have the initial-value problem for $\wh \phi (\xi, \cdot)$:
\begin{align}
\label{231}
\left\{\begin{aligned}
\pd_t^2 \wh \phi + (\mu + \nu) \lvert \xi \rvert^2 \pd_t \wh \phi + (\gamma + \kappa \lvert \xi \rvert^2) \lvert \xi \rvert^2 \wh \phi & = 0, \\
(\pd_t \wh \phi (\xi, 0), \wh \phi (\xi, 0)) & = (- \mathrm{i} \xi \cdot \wh \bu_0 (\xi), \wh \phi_0 (\xi)).
\end{aligned}\right.
\end{align}
We then see that the characteristic equation corresponding to \eqref{231} is given by~\eqref{224}. In the following, let $\lambda_\pm (\xi)$ be the roots of \eqref{224}. \par
We first consider the case $\lambda_+ (\xi) \ne \lambda_- (\xi)$, that is, Case 1 with $\lvert \xi \rvert \ne B$ and Case 2--5. In these cases, $\wh \phi$ is denoted by
\begin{equation}
\label{232}
\begin{split}
\wh \phi & = - \frac{\lambda_- \wh \phi_0 + \mathrm{i} \xi \cdot \wh \bu_0}{\lambda_+ - \lambda_-} e^{\lambda_+ t} + \frac{\lambda_+ \wh \phi_0 + \mathrm{i} \xi \cdot \wh \bu_0}{\lambda_+ - \lambda_-} e^{\lambda_- t} \\
& = \left(\cfrac{\lambda_+ e^{\lambda_- t} - \lambda_- e^{\lambda_+ t}} {\lambda_+ - \lambda_-}\right) \wh \phi_0 - \left(\cfrac{e^{\lambda_+ t} - e^{\lambda_- t}}{\lambda_+ - \lambda_-}\right) (\mathrm{i} \xi \cdot \wh \bu_0).
\end{split}
\end{equation}
On the other hand, by \eqref{eq-223}, we also obtain
\begin{align*}
\pd_t \wh \bu = - (\gamma + \kappa \lvert \xi \rvert^2) \mathrm{i} \xi \wh \phi + (- \mu \lvert \xi \rvert^2 \wh \bu - \nu \xi (\xi \cdot \wh \bu)).
\end{align*}
We decompose $\wh \bu$ into components parallel to and orthogonal to $\xi$:
\begin{align*}
\wh \bu (\xi, t) = a (\xi, t) \frac{\xi}{\lvert \xi \rvert} + \bb (\xi, t),
\end{align*}
where $a := (\xi \cdot \wh \bu) / \lvert \xi \rvert$ is the scalar and $\bb$ is the vector perpendicular to $\xi$. We see that $a$ and $\bb$ enjoy
\begin{align*}
\pd_t a = - (\mu + \nu) \lvert \xi \rvert^2 a - \mathrm{i} (\gamma + \kappa \lvert \xi \rvert^2) \lvert \xi \rvert \wh \phi, \qquad \pd_t \bb = - \mu \lvert \xi \rvert^2 \bb,
\end{align*}
respectively. Hence, we obtain
\begin{equation}
\label{233}
\begin{split}
a (\xi, t) & = e^{- (\mu + \nu) \lvert \xi \rvert^2 t} a (\xi, 0) \\
& \quad + \mathrm{i} e^{- (\mu + \nu) \lvert \xi \rvert^2 t} (\gamma + \kappa \lvert \xi \rvert^2) \lvert \xi \rvert \int_0^t e^{(\mu + \nu) \lvert \xi \rvert^2 \tau} \wh \phi (\xi, \tau) \,\mathrm{d}\tau, \\
\bb (\xi, t) & = e^{- \mu \lvert \xi \rvert^2 t} \bigg(\bI - \frac{\xi {}^\top\! \xi}{\lvert \xi \rvert^2} \bigg) \wh \bu_0.
\end{split}
\end{equation}
Recalling \eqref{232} and noting $\lambda_\pm + (\mu + \nu) \lvert \xi \rvert^2 = - \lambda_\mp$ and $\lambda_+ \lambda_- = (\gamma + \kappa \lvert \xi \rvert^2) \lvert \xi \rvert^2$, we arrive at \eqref{227}. \par
We next focus on Case 6: $\lambda_+ (\xi) \equiv \lambda_- (\xi)$ for any $\xi \in \BR^n \setminus \{0\}$. Since $\lambda_+ = \lambda_- = - A \lvert \xi \rvert^2$ is the double root of \eqref{224}, $\wh \phi$ can be written by 
\begin{align*}
\wh \phi = \alpha e^{- A \lvert \xi \rvert^2 t} + \beta t e^{- A \lvert \xi \rvert^2 t}
\end{align*}
with some scalar functions $\alpha = \alpha(\xi)$ and $\beta = \beta(\xi)$. The initial conditions imply
\begin{align*}
\alpha = \wh \phi_0, \quad \beta = A \lvert \xi \rvert^2 \wh \phi_0 - \mathrm{i} \xi \cdot \wh \bu_0,
\end{align*}
so that we obtain
\begin{align}
\label{234}
\wh \phi = (1 + A \lvert \xi \rvert^2 t) e^{- A \lvert \xi \rvert^2} \wh \phi_0 - t e^{- A \lvert \xi \rvert^2 t} (\mathrm{i} \xi \cdot \wh \bu_0).
\end{align}
Noting $\lambda_\pm + (\mu + \nu) \lvert \xi \rvert^2 = - \lambda_\mp$ and $\lambda_+ \lambda_- = A^2 \lvert \xi \rvert^2$ and using \eqref{233} and \eqref{234}, we have \eqref{2212}.

\section*{Acknowledgments}
\noindent
This work is partially supported by JSPS Grant-in-Aid for JSPS Fellows 19J10168 and Top Global University Project of Waseda University.


\begin{bibdiv}
\begin{biblist}
\bib{AF92}{article}{
	author={Adams, David R.},
	author={Frazier, Michael},
	title={Composition operators on potential spaces},
	journal={Proc. Amer. Math. Soc.},
	volume={114},
	date={1992},
	number={1},
	pages={155--165},
}


\bib{A95}{book}{
	author={Amann, Herbert},
	title={Linear and quasilinear parabolic problems. Vol. I},
	series={Monographs in Mathematics},
	volume={89},
	note={Abstract linear theory},
	publisher={Birkh\"{a}user Boston, Inc., Boston, MA},
	date={1995},
}		

\bib{ACFGM}{article}{
	author={Anderson, Daniel M.},
	author={Cermelli, Paolo},
	author={Fried, Eliot},
	author={Gurtin, Morton E.},
	author={McFadden, Geoffrey B.},
	title={General dynamical sharp-interface conditions for phase
		transformations in viscous heat-conducting fluids},
	journal={J. Fluid Mech.},
	volume={581},
	date={2007},
	pages={323--370},
}

\bib{AMW98}{article}{
	author={Anderson, D. M.},
	author={McFadden, G. B.},
	author={Wheeler, A. A.},
	title={Diffuse-interface methods in fluid mechanics},
	conference={
		title={Annual review of fluid mechanics, Vol. 30},
	},
	book={
		series={Annu. Rev. Fluid Mech.},
		volume={30},
		publisher={Annual Reviews, Palo Alto, CA},
	},
	date={1998},
	pages={139--165},
}

\bib{BCD}{book}{
	author={Bahouri, H.},
	author={Chemin, J.-Y.},
	author={Danchin, R.},
	title={Fourier analysis and nonlinear partial differential equations},
	series={Grundlehren der Mathematischen Wissenschaften [Fundamental
		Principles of Mathematical Sciences]},
	volume={343},
	publisher={Springer, Heidelberg},
	date={2011},
}	

\bib{BDL03}{article}{
	author={Bresch, Didier},
	author={Desjardins, Beno\^{\i}t},
	author={Lin, Chi-Kun},
	title={On some compressible fluid models: Korteweg, lubrication, and
		shallow water systems},
	journal={Comm. Partial Differential Equations},
	volume={28},
	date={2003},
	number={3-4},
	pages={843--868},
}

\bib{BGV19}{article}{
	author={Bresch, Didier},
	author={Gisclon, Marguerite},
	author={Lacroix-Violet, Ingrid},
	title={On Navier--Stokes--Korteweg and Euler--Korteweg Systems:
		Application to Quantum Fluids Models},
	journal={Arch. Ration. Mech. Anal.},
	volume={233},
	date={2019},
	number={3},
	pages={975--1025},
}

\bib{CH58}{article}{
	author={Cahn, John W.},
	author={Hilliard, John E.},
	title={Free Energy of a Nonuniform System. I.
		Interfacial Free Energy},
	journal={J. Chem. Phys.},
	volume={28},
	date={1958},
	number={2},
	pages={258--267},
}

\bib{C14}{article}{
	author={Charve, Fr\'{e}d\'{e}ric},
	title={Local in time results for local and non-local capillary
		Navier-Stokes systems with large data},
	journal={J. Differential Equations},
	volume={256},
	date={2014},
	number={7},
	pages={2152--2193},
}

\bib{C16}{article}{
	author={Charve, Fr\'{e}d\'{e}ric},
	title={Convergence of a low order non-local Navier-Stokes-Korteweg
		system: the order-parameter model},
	journal={Asymptot. Anal.},
	volume={100},
	date={2016},
	number={3-4},
	pages={153--191},
}

\bib{CD10}{article}{
	author={Charve, Fr\'{e}d\'{e}ric},
	author={Danchin, Rapha\"{e}l},
	title={A global existence result for the compressible Navier-Stokes
		equations in the critical $L^p$ framework},
	journal={Arch. Ration. Mech. Anal.},
	volume={198},
	date={2010},
	number={1},
	pages={233--271},
}

\bib{CDX18}{article}{
	author={Charve, Fr\'{e}d\'{e}ric},
	author={Danchin, Rapha\"{e}l},
	author={Xu, Jiang}
	title={Gevrey analyticity and decay for the compressible
		Navier-Stokes system with capillarity},
	status={preprint, arXiv:1805.01764}
	date={2018},
}

\bib{CH11}{article}{
	author={Charve, Fr\'{e}d\'{e}ric},
	author={Haspot, Boris},
	title={Convergence of capillary fluid models: from the non-local to the
		local Korteweg model},
	journal={Indiana Univ. Math. J.},
	volume={60},
	date={2011},
	number={6},
	pages={2021--2059},
}

\bib{CH13}{article}{
	author={Charve, Fr\'{e}d\'{e}ric},
	author={Haspot, Boris},
	title={On a Lagrangian method for the convergence from a non-local to a
		local Korteweg capillary fluid model},
	journal={J. Funct. Anal.},
	volume={265},
	date={2013},
	number={7},
	pages={1264--1323},
}

\bib{CMZ10}{article}{
	author={Chen, Qionglei},
	author={Miao, Changxing},
	author={Zhang, Zhifei},
	title={Global well-posedness for compressible Navier-Stokes equations
		with highly oscillating initial velocity},
	journal={Comm. Pure Appl. Math.},
	volume={63},
	date={2010},
	number={9},
	pages={1173--1224},
}

\bib{CZ19}{article}{
	author={Chen, Zhi-Min},
	author={Zhai, Xiaoping},
	title={Global large solutions and incompressible limit for the
		compressible Navier-Stokes equations},
	journal={J. Math. Fluid Mech.},
	volume={21},
	date={2019},
	number={2},
	pages={Art. 26, 23},
}

\bib{CK19}{article}{
	author={Chikami, Noboru},
	author={Kobayashi, Takayuki},
	title={Global well-posedness and time-decay estimates of the compressible
		Navier-Stokes-Korteweg system in critical Besov spaces},
	journal={J. Math. Fluid Mech.},
	volume={21},
	date={2019},
	number={2},
	pages={Art. 31},
}

\bib{CW91}{article}{
	author={Christ, F. M.},
	author={Weinstein, M. I.},
	title={Dispersion of small amplitude solutions of the generalized
		Korteweg-de Vries equation},
	journal={J. Funct. Anal.},
	volume={100},
	date={1991},
	number={1},
	pages={87--109},
}

\bib{CDMR05}{article}{
	author={Coquel, F.},
	author={Diehl, D.},
	author={Merkle, C.},
	author={Rohde, C.},
	title={Sharp and diffuse interface methods for phase transition problems
		in liquid-vapour flows},
	conference={
		title={Numerical methods for hyperbolic and kinetic problems},
	},
	book={
		series={IRMA Lect. Math. Theor. Phys.},
		volume={7},
		publisher={Eur. Math. Soc., Z\"{u}rich},
	},
	date={2005},
	pages={239--270},
}

\bib{D00}{article}{
	author={Danchin, R.},
	title={Global existence in critical spaces for compressible Navier-Stokes
		equations},
	journal={Invent. Math.},
	volume={141},
	date={2000},
	number={3},
	pages={579--614},
}

\bib{DD01}{article}{
	author={Danchin, Rapha\"{e}l},
	author={Desjardins, Beno\^{\i}t},
	title={Existence of solutions for compressible fluid models of Korteweg
		type},
	journal={Ann. Inst. H. Poincar\'{e} Anal. Non Lin\'{e}aire},
	volume={18},
	date={2001},
	number={1},
	pages={97--133},
}

\bib{DM17}{article}{
	author={Danchin, Rapha\"{e}l},
	author={Mucha, Piotr Bogus\l aw},
	title={Compressible Navier-Stokes system: large solutions and
		incompressible limit},
	journal={Adv. Math.},
	volume={320},
	date={2017},
	pages={904--925},
}

\bib{D86}{article}{
	author={Dunn, J. E.},
	title={Interstitial working and a nonclassical continuum thermodynamics},
	conference={
		title={New perspectives in thermodynamics},
	},
	book={
		publisher={Springer, Berlin},
	},
	date={1986},
	pages={187--222},
}

\bib{DS85}{article}{
	author={Dunn, J. E.},
	author={Serrin, J.},
	title={On the thermomechanics of interstitial working},
	journal={Arch. Rational Mech. Anal.},
	volume={88},
	date={1985},
	number={2},
	pages={95--133},
}

\bib{FZZ18}{article}{
	author={Fang, Daoyuan},
	author={Zhang, Ting},
	author={Zi, Ruizhao},
	title={Global solutions to the isentropic compressible Navier-Stokes
		equations with a class of large initial data},
	journal={SIAM J. Math. Anal.},
	volume={50},
	date={2018},
	number={5},
	pages={4983--5026},
}

\bib{FNP01}{article}{
	author={Feireisl, Eduard},
	author={Novotn\'{y}, Anton\'{\i}n},
	author={Petzeltov\'{a}, Hana},
	title={On the existence of globally defined weak solutions to the
		Navier-Stokes equations},
	journal={J. Math. Fluid Mech.},
	volume={3},
	date={2001},
	number={4},
	pages={358--392},
}

\bib{F86}{book}{
	author={Finn, Robert},
	title={Equilibrium capillary surfaces},
	series={Grundlehren der Mathematischen Wissenschaften [Fundamental
		Principles of Mathematical Sciences]},
	volume={284},
	publisher={Springer-Verlag, New York},
	date={1986},
	pages={xvi+245},
}

\bib{FK17}{article}{
	author={Freist\"{u}hler, Heinrich},
	author={Kotschote, Matthias},
	title={Phase-field and Korteweg-type models for the time-dependent flow
		of compressible two-phase fluids},
	journal={Arch. Ration. Mech. Anal.},
	volume={224},
	date={2017},
	number={1},
	pages={1--20},
}

\bib{GL09}{book}{
	author={Ginzburg, V. L.},
	author={Landau, L. D.},
	title={On the Theory of superconductivity},
	conference={
		title={On Superconductivity and Superfluidity:
			A Scientific Autobiography}
	},
	note={Translated from the Russian},
	date={2009},
	publisher={Springer Berlin Heidelberg},
	address={Berlin, Heidelberg},
	pages={113--137},
}

\bib{G96}{article}{
	author={Gurtin, Morton E.},
	title={Generalized Ginzburg-Landau and Cahn-Hilliard equations based on a
		microforce balance},
	journal={Phys. D},
	volume={92},
	date={1996},
	number={3-4},
	pages={178--192},
}

\bib{GPV96}{article}{
	author={Gurtin, Morton E.},
	author={Polignone, Debra},
	author={Vi\~{n}als, Jorge},
	title={Two-phase binary fluids and immiscible fluids described by an
		order parameter},
	journal={Math. Models Methods Appl. Sci.},
	volume={6},
	date={1996},
	number={6},
	pages={815--831},
}

\bib{H10}{article}{
	author={Haspot, Boris},
	title={Cauchy problem for viscous shallow water equations with a term of
		capillarity},
	journal={Math. Models Methods Appl. Sci.},
	volume={20},
	date={2010},
	number={7},
	pages={1049--1087},
}

\bib{H17}{article}{
	author={Haspot, Boris},
	title={Global strong solution for the Korteweg system with quantum
		pressure in dimension $N\ge 2$},
	journal={Math. Ann.},
	volume={367},
	date={2017},
	number={1-2},
	pages={667--700},
}

\bib{HL94}{article}{
	author={Hattori, Harumi},
	author={Li, Dening},
	title={Solutions for two-dimensional system for materials of Korteweg
		type},
	journal={SIAM J. Math. Anal.},
	volume={25},
	date={1994},
	number={1},
	pages={85--98},
}

\bib{HL96}{article}{
	author={Hattori, Harumi},
	author={Li, Dening},
	title={Global solutions of a high-dimensional system for Korteweg
		materials},
	journal={J. Math. Anal. Appl.},
	volume={198},
	date={1996},
	number={1},
	pages={84--97},
}

\bib{HL96b}{article}{
	author={Hattori, Harumi},
	author={Li, Dening},
	title={The existence of global solutions to a fluid dynamic model for
		materials for Korteweg type},
	journal={J. Partial Differential Equations},
	volume={9},
	date={1996},
	number={4},
	pages={323--342},
}

\bib{HM10}{article}{
	author={Heida, Martin},
	author={M\'{a}lek, Josef},
	title={On compressible Korteweg fluid-like materials},
	journal={Internat. J. Engrg. Sci.},
	volume={48},
	date={2010},
	number={11},
	pages={1313--1324},
}		

\bib{HP97}{article}{
	author={Hieber, Matthias},
	author={Pr\"{u}ss, Jan},
	title={Heat kernels and maximal $L^p$-$L^q$ estimates for parabolic
		evolution equations},
	journal={Comm. Partial Differential Equations},
	volume={22},
	date={1997},
	number={9-10},
	pages={1647--1669},
}

\bib{HZ95}{article}{
	author={Hoff, David},
	author={Zumbrun, Kevin},
	title={Multi-dimensional diffusion waves for the Navier-Stokes equations
		of compressible flow},
	journal={Indiana Univ. Math. J.},
	volume={44},
	date={1995},
	number={2},
	pages={603--676},
}

\bib{HPZ18}{article}{
	author={Hou, Xiaofeng},
	author={Peng, Hongyun},
	author={Zhu, Changjiang},
	title={Global classical solutions to the 3D Navier-Stokes-Korteweg
		equations with small initial energy},
	journal={Anal. Appl. (Singap.)},
	volume={16},
	date={2018},
	number={1},
	pages={55--84},
}

\bib{IH11}{book}{
	author={Ishii, Mamoru},
	author={Hibiki, Takashi},
	title={Thermo-fluid dynamics of two-phase flow},
	edition={2},
	note={With a foreword by Lefteri H. Tsoukalas},
	publisher={Springer, New York},
	date={2011},
}

\bib{JLW14}{article}{
	author={J\"{u}ngel, Ansgar},
	author={Lin, Chi-Kun},
	author={Wu, Kung-Chien},
	title={An asymptotic limit of a Navier-Stokes system with capillary
		effects},
	journal={Comm. Math. Phys.},
	volume={329},
	date={2014},
	number={2},
	pages={725--744},
}

\bib{KP88}{article}{
	author={Kato, Tosio},
	author={Ponce, Gustavo},
	title={Commutator estimates and the Euler and Navier-Stokes equations},
	journal={Comm. Pure Appl. Math.},
	volume={41},
	date={1988},
	number={7},
	pages={891--907},
}

\bib{KS02}{article}{
	author={Kobayashi, Takayuki},
	author={Shibata, Yoshihiro},
	title={Remark on the rate of decay of solutions to linearized
		compressible Navier-Stokes equations},
	journal={Pacific J. Math.},
	volume={207},
	date={2002},
	number={1},
	pages={199--234},
}

\bib{KT19}{article}{
	author={Kobayashi, Takayuki},
	author={Tsuda, Kazuyuki},
	title={Global existence and time decay estimate of solutions to the
		compressible Navier-Stokes-Korteweg system under critical condition},
	status={preprint, arXiv:1905.03542},
	date={2019},
}

\bib{K}{article}{
	author={Korteweg, D. J.},
	title={Sur la forme que prennent les \'{e}quations du mouvement des
		fluides si l'on tient compte des forces capillaires caus\'{e}es
		par des variations de densit\'{e} consid\'{e}rables mais continues et
		sur la 	th\'{e}orie de la capillarit\'{e} dans l'hypoth\`{e}se
		d'une variation continue de la densit\'{e}},
	journal={Arch. N{\'e}erl.},
	volume={6},
	date={1901},
	number={6} ,
	pages={1--24},
}


\bib{LMM11}{article}{
	author={Lamorgese, Andrea G.},
	author={Molin, Dafne},
	author={Mauri, Roberto},
	title={Phase field approach to multiphase flow modeling},
	journal={Milan J. Math.},
	volume={79},
	date={2011},
	number={2},
	pages={597--642},
}

\bib{L98}{book}{
	author={Lions, Pierre-Louis},
	title={Mathematical topics in fluid mechanics. Vol. 2},
	series={Oxford Lecture Series in Mathematics and its Applications},
	volume={10},
	note={Compressible models;
		Oxford Science Publications},
	publisher={The Clarendon Press, Oxford University Press, New York},
	date={1998},
}

\bib{LLGH15}{article}{
	author={Liu, Ju},
	author={Landis, Chad M.},
	author={Gomez, Hector},
	author={Hughes, Thomas J. R.},
	title={Liquid-vapor phase transition: thermomechanical theory, entropy
		stable numerical formulation, and boiling simulations},
	journal={Comput. Methods Appl. Mech. Engrg.},
	volume={297},
	date={2015},
	pages={476--553},
}	

\bib{LT98}{article}{
	author={Lowengrub, J.},
	author={Truskinovsky, L.},
	title={Quasi-incompressible Cahn-Hilliard fluids and topological
		transitions},
	journal={R. Soc. Lond. Proc. Ser. A Math. Phys. Eng. Sci.},
	volume={454},
	date={1998},
	number={1978},
	pages={2617--2654},
}

\bib{MN80}{article}{
	author={Matsumura, A.},
	author={Nishida, T.},
	title={The initial value problem for the equations of motion of viscous
		and heat-conductive gases},
	journal={J. Math. Kyoto Univ.},
	volume={20},
	date={1980},
	number={1},
	pages={67--104},
}

\bib{M03}{article}{
	author={Mucha, P. B.},
	title={The Cauchy problem for the compressible Navier-Stokes equations in
		the $L_p$-framework},
	journal={Nonlinear Anal.},
	volume={52},
	date={2003},
	number={4},
	pages={1379--1392},
}

\bib{MS19}{article}{
	author={Murata, M.},
	author={Shibata, Y.},
	title={The global well-posedness for the compressible fluid model of Korteweg type},
	status={preprint, arXiv:1908.07224},
	date={2019},
}

\bib{OHP10}{article}{
	author={Oden, J. Tinsley},
	author={Hawkins, Andrea},
	author={Prudhomme, Serge},
	title={General diffuse-interface theories and an approach to predictive
		tumor growth modeling},
	journal={Math. Models Methods Appl. Sci.},
	volume={20},
	date={2010},
	number={3},
	pages={477--517},
}

\bib{O02}{book}{
	author={Onuki, Akira},
	title={Phase transition dynamics},
	address={Cambridge},
	publisher={Cambridge University Press},
	date={2002},
}

\bib{PS16}{book}{
	author={Pr\"{u}ss, Jan},
	author={Simonett, Gieri},
	title={Moving interfaces and quasilinear parabolic evolution equations},
	series={Monographs in Mathematics},
	volume={105},
	publisher={Birkh\"{a}user/Springer, [Cham]},
	date={2016},
}

\bib{R05}{article}{
	author={Rohde, Christian},
	title={On local and non-local Navier-Stokes-Korteweg systems for
		liquid-vapour phase transitions},
	journal={ZAMM Z. Angew. Math. Mech.},
	volume={85},
	date={2005},
	number={12},
	pages={839--857},
}

\bib{R79}{book}{
	author={van der Waals, J. D.},
	title={On the continuity of the gaseous and liquid states},
	series={Studies in Statistical Mechanics, XIV},
	note={Translated from the Dutch;
		Edited and with an introduction by J. S. Rowlinson},
	publisher={North-Holland Publishing Co., Amsterdam},
	date={1988},
}

\bib{Saw18}{book}{
	author={Sawano, Yoshihiro},
	title={Theory of Besov spaces},
	series={Developments in Mathematics},
	volume={56},
	publisher={Springer, Singapore},
	date={2018},
}

\bib{SK85}{article}{
	author={Shizuta, Y.},
	author={Kawashima, S.},
	title={Systems of equations of hyperbolic-parabolic type with
		applications to the discrete Boltzmann equation},
	journal={Hokkaido Math. J.},
	volume={14},
	date={1985},
	number={2},
	pages={249--275},
}

\bib{TWX12}{article}{
	author={Tan, Zhong},
	author={Wang, Huaqiao},
	author={Xu, Jiankai},
	title={Global existence and optimal $L^2$ decay rate for the strong
		solutions to the compressible fluid models of Korteweg type},
	journal={J. Math. Anal. Appl.},
	volume={390},
	date={2012},
	number={1},
	pages={181--187},
}

\bib{T97}{book}{
	author={Tanabe, Hiroki},
	title={Functional analytic methods for partial differential equations},
	series={Monographs and Textbooks in Pure and Applied Mathematics},
	volume={204},
	publisher={Marcel Dekker, Inc., New York},
	date={1997},
}

\bib{T00}{book}{
	author={Taylor, Michael E.},
	title={Tools for PDE},
	series={Mathematical Surveys and Monographs},
	volume={81},
	note={Pseudodifferential operators, paradifferential operators, and layer
		potentials},
	publisher={American Mathematical Society, Providence, RI},
	date={2000},
}

\bib{WHHG}{book}{
	author={Wang, B.},
	author={Huo, Z.},
	author={Hao, C.},
	author={Guo, Z.},
	title={Harmonic analysis method for nonlinear evolution equations. I},
	publisher={World Scientific Publishing Co. Pte. Ltd., Hackensack, NJ},
	date={2011},
}

\bib{W18}{article}{
	author={Watanabe, Keiichi},
	title={Compressible-incompressible two-phase flows with phase transition:
		model problem},
	journal={J. Math. Fluid Mech.},
	volume={20},
	date={2018},
	number={3},
	pages={969--1011},
}
\end{biblist}
\end{bibdiv}

\end{document}